\newtheorem{example}{Example}
\newtheorem{remark}{Remark}[section]
\newtheorem{proposition}{Proposition}[section]
\newtheorem{lemma}{Lemma}[section]
\newtheorem{theorem}{Theorem}[section]
\newtheorem{definition}{Definition}[section]
\title{Direct and Inverse scattering in a three-dimensional planar waveguide}
\author{Yan Chang\thanks{School of Mathematics, Harbin Institute of Technology,  Harbin 150000, China,
  ({21B312002@stu.hit.edu.cn}).}
\and Yukun Guo\thanks{School of Mathematics, Harbin Institute of Technology,  Harbin 150000, China,
  ({ykguo@hit.edu.cn}).}
\and Yue Zhao\thanks{School of Mathematics and Statistics, Central China Normal University,
	Wuhan 430079, China, ({zhaoyueccnu@163.com}).}}
\date{}
\begin{document}

\maketitle

\begin{abstract}
	In this paper, we study the direct and inverse scattering of the Schr\"odinger equation in a three-dimensional planar waveguide. For the direct problem, we derive a resonance-free region and resolvent estimates for the resolvent of the Schr\"odinger operator in such a geometry. Based on the analysis of the resolvent, several inverse problems are investigated. First, given the potential function, we prove the uniqueness of the inverse source problem with multi-frequency data. We also develop a Fourier-based method to reconstruct the source function. The capability of this method is numerically illustrated by examples. Second, the uniqueness and increased stability of an inverse potential problem from data generated by incident waves are achieved in the absence of the source function. To derive the stability estimate, we use an argument of quantitative analytic continuation in complex theory. Third, we prove the uniqueness of simultaneously determining the source and potential by active boundary data generated by incident waves.  In these inverse problems, we only use the limited lateral Dirichlet boundary data at multiple wavenumbers within a finite interval. 
\end{abstract}

\textbf{Keywords:} Schr\"odinger equation, waveguide, scattering resonances, inverse scattering, uniqueness, Fourier method

\section{Introduction}

We consider the direct and inverse scattering problems in a three-dimensional planar waveguide. Let $D\subset\mathbb R^3$ be an infinite slab between two parallel hyperplanes $\Gamma^+$ and $\Gamma^-$ with width $L$. Without loss of generality, we assume that
\[
D = \{x = (\tilde{x}, x_3)\in\mathbb R^3: \tilde{x}= (x_1, x_2)\in\mathbb R^2,\ 0<x_3<L\}, \quad L>0,
\]
and
\[
\Gamma^+ = \{x\in\mathbb R^3: x_3 = L\}, \quad \Gamma^- = \{x\in\mathbb R^3: x_3 = 0\}.
\]

Consider the following Schr\"odinger equation
\begin{equation}\label{main_eq}
     -\Delta u+ V u -  k^2 u = f,\quad \text{in}\ D,
\end{equation}
where $k>0$ is the wavenumber, $V$ is the potential function, and $f$ is the source function. 
Let $\widetilde{B}_R = \{\tilde{x}\in\mathbb R^2: |\tilde{x}|\leq R\}$ be a two-dimensional disk and $C_R = \widetilde{B}_R\times [0, L]$ be a cylinder in the waveguide. Denote the lateral boundary of $C_R$ by $\Gamma_R = \partial \widetilde{B}_R \times [0, L]$.
Assume that both $f\in L^2(D)$ and $V\in L^\infty(D)$ are compactly supported in $C_R$.
We also assume that the potential function $V$ is invariant in the $x_3$-variable, i.e., $V = V(\tilde{x})$.
Let $u$ satisfy the Neumann and Dirichlet boundary
conditions, respectively,
\begin{equation}\label{bc}
	\frac{\partial u}{\partial x_3} = 0 \quad \text{on} \,\, \Gamma^+, \quad u = 0 \quad \text{on} \,\, \Gamma^-.
\end{equation}

An important application of this waveguide problem is to provide a simplified but effective model for the propagation of time-harmonic acoustic waves in the ocean \cite{AK, Xu}.  
In this model, the Dirichlet boundary condition models the surface of the ocean, while the Neumann boundary condition models the seabed underneath. In this paper, we begin by analyzing the direct scattering problem to seek a resonance-free region and derive the resolvent estimates
for the resolvent $(-\Delta - \lambda^2 + V)^{-1}$ of the Schr\"odinger operator in the waveguide. These studies enable us to gain some insights into the inverse problems. 
As an application of the resolvent estimates, we continue to study the inverse problems of determining the source and potential functions from the knowledge of the scattered field measured on $\Gamma_R$ corresponding to the wavenumber given in a finite interval. The framework developed in this paper is unified and can be applied to the configuration of a tubular waveguide.

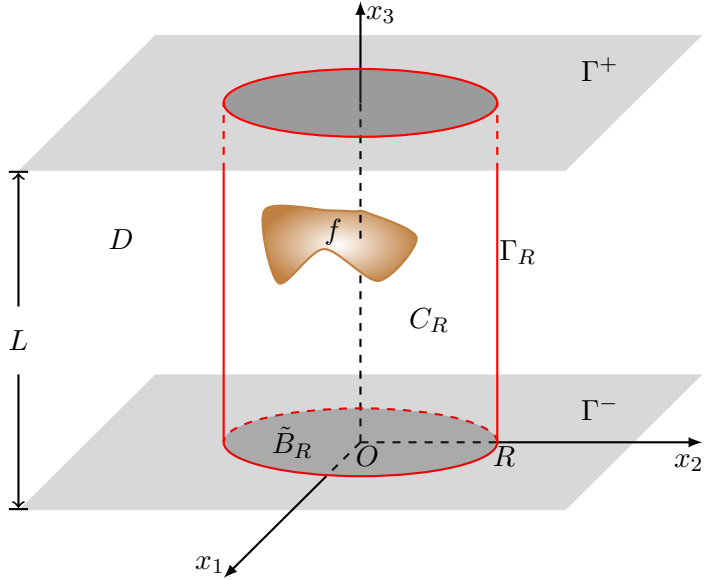
\begin{figure}
\centering
\begin{tikzpicture}[thick, scale=0.9]

\fill [lightgray, opacity = 0.6, xslant = 1] (-1, -4) rectangle (7, -2); % bottom plane
\fill [lightgray, opacity = 0.6, yshift = 5cm, xslant = 1] (-1, -4) rectangle (7, -2); % top plane

\pgfmathsetseed{66}			
\draw[brown] plot[brown, smooth cycle, samples=8, domain={1:8}, xshift=-.3cm] (\x*360/7+2*rnd:0.1cm+1.6cm*rnd) [shading=ball, outer color=brown, inner color=white]; %fill=lightgray
\draw node at (-.4, 0.1) {$f$};

\draw [dashed] (0, -3) -- (2, -3); \draw[-latex] (2, -3) -- (5, -3); % x-axis
\draw [dashed] (0, -3) -- (-.5, -3.5); \draw[-latex] (-.5, -3.5) -- (-2, -5);  % y-axis
\draw [dashed] (0, -3) -- (0, -.5); \draw [dashed] (0, 0) -- (0, 2);\draw[-latex] (0, 2) -- (0, 3.5);  % z-axis
\draw [->|] (-5, -1) -- (-5, 1);  \draw [->|] (-5, -2) -- (-5, -4);  

\draw [red, dashed] (2, -3) arc (0:180:2 and 0.5); % waveguide: bottom
\draw [red] (2, -3) arc (0:-180:2 and 0.5); % waveguide: bottom
\fill [darkgray, opacity = 0.3] (0, -3) ellipse (2 and 0.5); % waveguide: bottom
\fill [darkgray, opacity = 0.4] (0, 2) ellipse (2 and 0.5);  % waveguide: top
\draw [red] (0, 2) ellipse (2 and 0.5); % waveguide: top

\draw [red] (-2, -3)--(-2, 1); \draw [dashed, red] (-2, 1) -- (-2, 2); % waveguide: left lateral
\draw [red] (2, -3)--(2, 1); \draw [dashed, red] (2, 1) -- (2, 2);% waveguide: right lateral

\draw (-2.2, -4.8) node {$x_1$};
\draw (4.8, -3.3) node {$x_2$};
\draw (0.3, 3.3) node {$x_3$};
\draw (-1, -3) node {$\tilde{B}_R$};
\draw (-3.5, 0) node {$D$};
\draw (1, -1.2) node {$C_R$};
\draw (0.1, -3.2) node {$O$};
\draw (-5, -1.5) node {$L$};
\draw (2.1, -3.2) node {$R$};
\draw (2.3, -.2) node {$\Gamma_R$};
\draw (3.5, 2.5) node {$\Gamma^+$};
\draw (3.5, -2.5) node {$\Gamma^-$};

\end{tikzpicture} 
\caption{An illustration of the scattering problem in the waveguide.}
\end{figure}

The direct scattering problems in a planar waveguide corresponding to a positive wavenumber have been well studied in the literature. In this case, resonances may occur at a sequence of special frequencies, and this fact leads to the non-uniqueness of the direct problem \cite{AGL}. However, the analysis of the scattering resonances for the resolvent $(-\Delta - \lambda^2 + V)^{-1}$ in this geometry remains open. In scattering theory, the resonances are considered to be the poles of the meromorphic extension of the resolvent with respect to $\lambda$ which could be a complex number. The phenomenon of scattering resonances naturally occurs and has significant applications in a wide range of scientific and engineering areas. For instance, the properties of scattering resonances can be applied to long-time asymptotics of the wave equation, which leads to resonance expansions of waves \cite{Dyatlov}. In this paper, we analyze the direct scattering and derive a resonance-free region and resolvent estimates for the resolvent of the Schr\"odinger operator in a three-dimensional planar waveguide. 

The existing results on the inverse source problem in a waveguide are mainly
focused on the delta-type/point-like sources and numerics \cite{BIT, BGT, L, LXZ}. In \cite{Meng}, the author studied the inverse problem of qualitatively recovering the support of a source function by the multi-frequency far-field pattern with certain assumptions on the source function. To our knowledge, this article makes the novel attempt to establish the theoretical foundation and provide a feasible numerical scheme for quantitatively determining a general source function in the waveguide. We refer to \cite{KLU, LU} and the references cited therein for the corresponding inverse boundary value problems, where the data is the Dirichlet-to-Neumann (DtN) map and the main mathematical tool is the construction of complex geometric optics (CGO) solutions. In this paper, since multi-wavenumber data is available, our treatment does not resort to the construction of CGO solutions.

The first part of this paper is concerned with the direct problem. In our setting, the direct problem is to investigate the resolvent
$(-\Delta - \lambda^2 + V)^{-1}$ where $\lambda$ could be complex. Unfortunately, as discussed above, the scattering waveguides have a special feature that is not present in free space: there may exist a sequence of scattering resonances for which the uniqueness of solutions does not hold. Another special feature is that the scattered field consists of an infinite number of exponentially decaying evanescent modes. This will bring difficulty in the analytic continuation of the resolvent to the lower-half complex plane. To resolve this issue, we assume that the source function only admits a finite number of Fourier modes with respect to the $x_3$ variable. As a consequence, the scattered field also consists of a finite number of Fourier modes. We derive the desired resonance-free region and resolvent estimates by studying the analytic continuation of the complex wavenumber corresponding to each mode and applying the recent results on the resolvent of the Schr\"odinger operator in the free two-dimensional space in \cite{ZZ}. As a special case, we also consider the free resolvent $(-\Delta - \lambda^2)^{-1}$. In this case, we show that a larger sectorial analytic domain can be obtained in the absence of the potential function. 

The study of direct scattering problems paves the way for delving into the inverse problems. The second part of this paper is devoted to three prototypical inverse scattering problems: recovering respectively the source $f$, the potential $V$, and the co-recovery of both $f$ and $V$. The aforementioned analysis of the resolvent is indispensable in establishing the stability of the inverse problems. For the inverse scattering problems by using multi-wavenumber data, a key question shall be usually answered: whether one can find a resonance-free region that contains an infinite interval of the positive real axis. If this is true, then one can apply the analytic continuation principle. Based upon the analysis of the resolvent, the stability and uniqueness issues of the three inverse problems are mathematically investigated. The analysis employs the limited lateral near-field Dirichlet data on $\Gamma_R$ at multiple frequencies. 

For the first inverse problem of determining the source with known potential, we establish the theoretical results of uniqueness and stability. For such problems, the use of multi-wavenumber is usually necessary to overcome the non-uniqueness issue \cite{Bao}. We construct an orthogonal basis in $L^2(C_R)$ and deduce integral equations that connect the scattering data and the source function. As a consequence, we derive uniqueness for the inverse source problem. Similar techniques can be found in, e.g.,\cite{BLZ}. In particular, in the case $V = 0$, we develop a numerical scheme to reconstruct the source from the multi-frequency radiated field. The proposed approach can be viewed as a novel extension of the Fourier method for solving the multi-frequency inverse source problem of acoustic wave \cite{ZG}. The Fourier method has been applied to the inverse source problems for electromagnetic wave \cite{WMGL18, WSGLL19}, elastic wave \cite{SW19, WZSW22}, and recently the biharmonic wave \cite{CGYZ}. Nevertheless, it is not trivial to design the Fourier method for recovering the source in the waveguide because different Fourier modes inherently intertwine with each other. To tackle this obstruction, we adopted the variable separation strategy to first represent the radiated field via a series expression, with the expansion coefficients corresponding to the Fourier modes. Then the Fourier method can be adopted to reconstruct the Fourier modes of the source function, and thus the series expansions can be used as the reconstruction. In addition, it deserves noting that, compared with the direct problem where we assume that the source function only admits a finite number of Fourier modes for the $x_3$ variable, the Fourier method developed here is feasible for recovering a more general source. Furthermore, we also discuss the extension of the inverse source problem with far-field data.
	
Next, we consider the inverse potential problem in the absence of the source function, we prove the uniqueness and increasing stability of the inverse potential problem by the scattered field generated by incident waves. The proof relies on the resolvent estimate and does not resort to the method of constructing CGO solutions as in \cite{KLU, LU} since multi-wavenumber data is available. Based on the resonance-free region and resolvent estimates, the increased stability is obtained by applying an argument of analytic continuation developed in \cite{ZZ}. 

The last inverse problem is the more challenging co-inversion from the active boundary measurements generated by both the endogenous source and the exogenous excitation waves. We show that the unknown potential and source can be uniquely determined simultaneously.  For the simultaneous determination of the source and potential of the random Schr\"odinger equation in free space, we refer the reader to \cite{LLM1, LLM}. We also refer to \cite{BLT21} for the simultaneous recovery of a potential and a point source in the Schr\"odinger equation from Cauchy data. Note that a promising feature of the current study is that only limited-aperture Dirichlet boundary measurements at multiple wavenumbers are needed throughout the analysis. 
%Furthermore, we point out that stability estimates for these inverse problems can also be derived
%using the resolvent estimates and ideas used in the proofs of the uniqueness. We refer the interested reader to \cite{LZZ, ZZ}.

The rest of the paper is organized as follows. \Cref{sec: direct_problem} is concerned with the direct scattering problem. An analytic domain and resolvent estimates of the resolvent in the waveguide are derived.  \Crefrange{sec: IP1}{sec: IP3} are dedicated to the inverse problems with the help of the analysis of the resolvent. First, we consider theoretical uniqueness and develop a numerical method for the inverse source problem in \cref{sec: IP1}. Then, in the absence of the source function, the inverse potential scattering problem is investigated in \cref{sec: IP2}. The third inverse problem of identifying the source-potential pair is tackled in \cref{sec: IP2}. Here we present a uniqueness result of simultaneously recovering the source and potential by active measurements. Finally, we conclude this article by summarizing the contributions and shedding light on future works in \cref{sec: conclusion}.

\section{Direct scattering}\label{sec: direct_problem}

In this section, we investigate the resolvent in the planar waveguide. Consider the following Schr\"odinger equation 
\begin{equation}\label{model}
	-\Delta u+ V u- \lambda^2 u =f,\quad \lambda\in\mathbb R^+.
\end{equation}
Denote the resolvent by
\[
R_V(\lambda) = (-\Delta - \lambda^2 + V)^{-1}
\]
which gives
\[
u(x, \lambda) = R_V(\lambda) f(x).
\]

We carry out the separation of variables in $\tilde{x}$ and $x_3$ for the scattered field and the source functions, which leads to series expansions as follows
\begin{equation}
	\label{eq:seriesExpansion}
	u(\tilde{x}, x_3) = \sum_{n = 1}^\infty u_n(\tilde{x}) \sin(\alpha_n x_3),\quad 
	f(\tilde{x}, x_3) = \sum_{n = 1}^\infty f_n(\tilde{x}) \sin(\alpha_n x_3),
\end{equation}
with $\alpha_n = \frac{(2n - 1)\pi}{2L}$ and  the modes $u_n$ of $u$ and $f_n$ of $f$ are given by
\begin{align}\label{fourier}
	u_n(\tilde{x}) = \frac{2}{L} \int_0^L u(x) \sin(\alpha_n x_3) {\rm d}x_3,\quad 
	f_n(\tilde{x}) = \frac{2}{L} \int_0^L f(x) \sin(\alpha_n x_3) {\rm d}x_3.
\end{align}

The modes $u_n$ are required to satisfy the two-dimensional Helmholtz equation 
\begin{equation}\label{eqn_n}
	-\Delta_{\tilde{x}} u_n - \beta_n^2(\lambda) u_n + V u_n = f_n
\end{equation}
where
\[
\beta_n^2(\lambda) = \lambda^2 - \alpha_n^2  \,\, \text{with} \,\, \Im\beta_n(\lambda)\geq 0,
\]
and the Sommerfeld radiation condition proposed in \cite{GX}
\begin{align}\label{src}
	\lim_{r\to\infty} r^{1/2} (\partial_r u_n - {\rm i}\beta_n u_n) = 0, \quad r = |\tilde{x}|.
\end{align}
Denote the resolvent of the Schr\"odinger operator $-\Delta_{\tilde{x}} - \lambda^2 + V$ in two dimensions by
\[
\widetilde{R}_V(\lambda) = (-\Delta_{\tilde{x}} - \lambda^2 + V)^{-1}.
\]
Thus, we have a formal representation of the resolvent 
\begin{align}\label{sum1}
	R_V(\lambda)(f) = \sum_{n = 1}^\infty \widetilde{R}_V(\beta_n(\lambda)) (f_n) \sin(\alpha_n x_3), \quad \lambda\in{\mathbb R}^+\backslash\cup_{n=0}^\infty\{\alpha_n\}. 
\end{align}

In what follows, we shall delineate the analyticity of $\widetilde{R}_V(\beta_n(\lambda))$ and its resolvent estimates concerning complex $\lambda$.

\subsection{Analytic continuation}

In this subsection, we would resort to the analytic continuation technique to extend the domain of $\beta_n(\lambda)$ from $\lambda\in\mathbb R^+$ to the complex plane such that $\beta_n(\lambda)$ is complex analytic.
Let $\lambda = \lambda_1 + {\rm i}\lambda_2\in\mathbb C$. We have
\[
\beta_n^2(\lambda_1, \lambda_2)   = \gamma_n  + {\rm i} \eta
\]
where
\begin{equation}\label{ge}
	\gamma_n  = \lambda_1^2 - \lambda_2^2 - \alpha_n^2, \quad \eta = 2\lambda_1\lambda_2.
\end{equation}
Moreover, a direct calculation yields 
\[
\beta_n(\lambda_1, \lambda_2)  = a_n (\lambda_1, \lambda_2) +{\rm i} b_n (\lambda_1, \lambda_2), \quad \text{for} \,\, \lambda_1\geq 0, \,\, \lambda_2\geq 0,
\]
where
\begin{align*}
	a_n (\lambda_1, \lambda_2) = \left(\frac{\sqrt{\gamma_n^2 + \eta^2} + \gamma_n}{2}\right)^{1/2},\quad 
	b_n (\lambda_1, \lambda_2) = \left(\frac{\sqrt{\gamma_n^2 + \eta^2} - \gamma_n}{2}\right)^{1/2}.
\end{align*}

Now we extend the real analytic functions $a_n$ and $b_n$ analytically from the first quadrant $\{\lambda_1\geq 0, \, \lambda_2\geq 0\}$ to $\mathbb R^2\setminus[-\alpha_{n}, \alpha_{n}]$ by excluding the resonance $\alpha_n$, which will lead to the complex analyticity of $\beta_n(\lambda)$. First, noting that $\partial_{\lambda_2} a_n(\lambda_1, \lambda_2) = 0$ and $b_n(\lambda_1, \lambda_2)= 0$ on $\{\lambda\in\mathbb R^2: \lambda_2 = 0\}\setminus[-\alpha_{n}, \alpha_{n}]$, we apply the even extension to $a_n(\lambda_1, \lambda_2)$ and the odd extension to $b_n(\lambda_1, \lambda_2)$ with respect to the variable $\lambda_2$ which gives the analytic extensions of $a_n$ and $b_n$ from the first quadrant to the right half plane 
$\{\lambda_1\geq 0\} \setminus[0, \alpha_{n}]$ as follows 
\begin{align*}
	\Re\beta_n(\lambda_1, \lambda_2) = 
	\begin{cases}
		a_n(\lambda_1, \lambda_2), \quad &\lambda_2\geq 0,\\[2pt]
		a_n(\lambda_1, -\lambda_2), \quad &\lambda_2< 0,
	\end{cases}
\end{align*}
and
\begin{align*}
	\Im\beta_n (\lambda_1, \lambda_2)= 
	\begin{cases}
		b_n(\lambda_1, \lambda_2), \quad &\lambda_2\geq 0,\\[2pt]
		-b_n(\lambda_1, -\lambda_2), \quad &\lambda_2< 0.
	\end{cases}
\end{align*}

Next, noting that $a_n(\lambda_1, \lambda_2) = 0$ and $\partial_{\lambda_1} b_n(\lambda_1, \lambda_2)= 0$ on the axis $\{\lambda\in\mathbb R^2: \lambda_1 = 0\}$,
we apply the odd extension to $a_n(\lambda_1, \lambda_2)$ and the even extension to $b_n(\lambda_1, \lambda_2)$ with respect to the variable $\lambda_1$ by 
\begin{align*}
	\Re\beta_n(\lambda_1, \lambda_2) = 
	\begin{cases}
		a_n(\lambda_1, \lambda_2), \quad &\lambda_1\geq 0,\\[2pt]
		-a_n(-\lambda_1, \lambda_2), \quad &\lambda_1< 0,
	\end{cases}
\end{align*}
and
\begin{align*}
	\Im\beta_n (\lambda_1, \lambda_2)= 
	\begin{cases}
		b_n(\lambda_1, \lambda_2), \quad &\lambda_1\geq 0,\\[2pt]
		b_n(-\lambda_1, \lambda_2), \quad &\lambda_1< 0.
	\end{cases}
\end{align*}

In this way, we analytically extend $a_n$ and $b_n$ to the left half plane $\{\lambda_1\leq 0\} \setminus[-\alpha_{n}, 0]$.
Therefore, the real part $\Re\beta_n(\lambda_1, \lambda_2)$ and imaginary part $\Im\beta_n (\lambda_1, \lambda_2)$ are both extended
as real analytic functions for $\lambda\in \mathbb R^2\setminus [-\alpha_{n}, \alpha_{n}]$.
As a consequence of the above extension, we can show that $\beta_n (\lambda) = \Re\beta_n (\lambda_1, \lambda_2) + {\rm i} \Im\beta_n (\lambda_1, \lambda_2)$ is complex analytic for 
$\lambda = \lambda_1 + {\rm i}\lambda_2\in\mathbb C\setminus [-\alpha_{n}, \alpha_{n}]$ by the Cauchy-Riemann equations
\begin{equation*}
\partial_{\lambda_1} a_n(\lambda_1, \lambda_2)= \partial_{\lambda_2} b_n(\lambda_1, \lambda_2), 
\quad \partial_{\lambda_2} a_n(\lambda_1, \lambda_2)= -\partial_{\lambda_1} b_n(\lambda_1, \lambda_2).
\end{equation*}

Notice that based on the above analytic extension of $\beta_n(\lambda)$, the imaginary part $\Im\beta_n(\lambda)$ is negative in $\mathbb C^-$ now. Therefore, for large $\alpha_n$
the kernel of $\widetilde{R}_0(\beta_n(\lambda))$ satisfies
\begin{equation}\label{H01_asymptotic}
H_0^{(1)}(\beta_n(\lambda)|\tilde{x} - \tilde{y}|) \sim \mathrm{e}^{\alpha_n |\tilde{x} - \tilde{y}|},
\end{equation}
Throughout, $H_m^{(1)}$ denotes the Hankel function of the first kind of order $m$. The asymptotic behavior \eqref{H01_asymptotic} implies that the series in \eqref{sum1} may not converge. To resolve this issue and study the analytic continuation of the resolvent $R_V(\lambda)$, we shall make the following assumption:

\begin{itemize}
	\item[(A)] The source function $f(x)$ has only a finite number of modes defined in \eqref{fourier}, i.e., there exits a positive integer $N_0>0$ such that 	  \begin{equation*}
		f(x) = \sum_{n = 1}^{N_0} f_n(\tilde{x})\sin(\alpha_n x_3). 
	\end{equation*}
\end{itemize}

\subsection{Resolvent estimates}

Based on the preparations in the previous subsection, we are now ready to establish several crucial estimates on the resolvent.
We begin with a few notations. For $\theta\in (0, \frac{\pi}{2})$, denote the sectorial domain $S_\theta$ by
\[
S_\theta = \{z\in\mathbb C: \arg z\in (-\theta, \theta) \cup (\pi-\theta, \pi+\theta), \ z\neq 0 \}.
\]
Hereafter, we denote by $t_{-}:=\max\{-t,0\}$ and ${\rm diam}({\rm supp}\rho): = \sup\{|\tilde{x} - \tilde{y}| : \tilde{x}, \tilde{y} \in {\rm supp}\rho\}$ for $\rho\in C_0^\infty(\mathbb R^2)$. We also sometimes simplify the relation ``$a\le C b$'' as ``$a\lesssim b$'' with a nonessential constant $C>0$ that might differ at each occurrence.

The following proposition concerns the analytic continuation of the free resolvent 
$\widetilde{R}_V(z) = (-\Delta_{\tilde{x}} - z^2 + V)^{-1}$ in two dimensions. 

\begin{proposition}{\cite[Theorem 3]{ZZ}}\label{meromorphic}
	Let $\rho\in C_0^\infty(\mathbb R^2)$ with $\rho = 1$ on $\mathrm{supp}V$. The resolvent $\rho \widetilde{R}_V(z)\rho$ is a meromorphic family of operators in $S_\theta$. Moreover, $\rho \widetilde{R}_V(z)\rho$ is analytic for $z\in S_\theta\cap \Omega_\delta$
	with the following resolvent estimates
	\begin{align*}%\label{bound_3}
		\|\rho \widetilde{R}_V(z)\rho\|_{L^2(\mathbb R^2)\rightarrow H^j(\mathbb R^2)}\lesssim |z|^{-1/2}(1 + |z|^2)^{j/2}
		\mathrm{e}^{L(\Im z)_-},\quad j = 0, 1, 2, 
	\end{align*}
	where $L>{\rm diam}({\rm supp}\rho)$. Here $\Omega_\delta$ is defined as
	\begin{align*}
		\Omega_\delta:=\{z: {\Im}z\geq -\delta {\rm log}(1 + |z|),\ |z|\geq C_0\},
	\end{align*}
	where $C_0$ is a positive constant and $\delta$ satisfies $0<\delta<\frac{1}{4L}$.  In particular, there are only finitely many poles in the domain
	\[
	\{z\in\mathbb{C}: {\Im}z\geq -\delta {\rm log}(1 + |z|),\ |z|\leq C_0\}\cap S_\theta.
	\]
\end{proposition}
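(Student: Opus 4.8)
The statement is \cite[Theorem~3]{ZZ}, so the plan is to reconstruct its proof, which reduces everything to the two–dimensional free resolvent together with analytic Fredholm theory. \emph{Step~1 (free resolvent).} First I would establish the estimate for the cut-off free resolvent $\rho\widetilde R_0(z)\rho$, $\widetilde R_0(z)=(-\Delta_{\tilde x}-z^2)^{-1}$, whose Schwartz kernel is $\frac{{\rm i}}{4}H_0^{(1)}(z|\tilde x-\tilde y|)$. From the large-argument asymptotics $H_0^{(1)}(\zeta)=\sqrt{2/(\pi\zeta)}\,\mathrm e^{{\rm i}(\zeta-\pi/4)}\bigl(1+O(|\zeta|^{-1})\bigr)$, valid off the branch cut, together with the locally integrable logarithmic singularity of $H_0^{(1)}$ near $\zeta=0$, the kernel restricted to $\tilde x,\tilde y\in\mathrm{supp}\,\rho$ is dominated by $C|z|^{-1/2}|\tilde x-\tilde y|^{-1/2}\mathrm e^{(\Im z)_-|\tilde x-\tilde y|}$ up to an integrable near-diagonal remainder. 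A Schur test then gives $\|\rho\widetilde R_0(z)\rho\|_{L^2\to L^2}\lesssim|z|^{-1/2}\mathrm e^{L(\Im z)_-}$ for $L>\mathrm{diam}(\mathrm{supp}\,\rho)$, and this extends analytically to $z\in S_\theta$ (since $\theta<\frac\pi2$, the argument $z|\tilde x-\tilde y|$ stays off the branch cut on the component around $\arg z=0$, and on the component around $\arg z=\pi$ one uses the continuation of $H_0^{(1)}$ across the cut, i.e.\ the passage to the nonphysical sheet). Feeding $-\Delta_{\tilde x}(\rho\widetilde R_0(z)\rho f)=\rho^2 f+z^2\rho\widetilde R_0(z)\rho f-[\Delta_{\tilde x},\rho]\widetilde R_0(z)\rho f$ into interior elliptic estimates, with a slightly enlarged cut-off absorbing the commutator, upgrades this to the $H^j$ bounds carrying the factor $(1+|z|^2)^{j/2}$.

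\emph{Step~2 (adding the potential).} Since $\rho\equiv1$ on $\mathrm{supp}\,V$, the resolvent identity $\widetilde R_V=\widetilde R_0-\widetilde R_0 V\widetilde R_V$, composed with $\rho$ and simplified by $V=\rho V=V\rho$, yields the functional equation
\begin{equation*}
\bigl(I+\mathcal K(z)\bigr)\,\rho\widetilde R_V(z)\rho=\rho\widetilde R_0(z)\rho,\qquad \mathcal K(z):=\rho\widetilde R_0(z)\rho\,V,
\end{equation*}
in which, crucially, only a single copy of $\rho\widetilde R_0(z)\rho$ appears, so no exponential is gained beyond Step~1. The operator $\mathcal K(z)$ is compact on $L^2(\mathbb R^2)$, because $\rho\widetilde R_0(z)\rho$ maps $L^2$ into $H^2$-functions supported in $\mathrm{supp}\,\rho$, which embeds compactly into $L^2$, while $V\in L^\infty$ is compactly supported; moreover $z\mapsto\mathcal K(z)$ is operator-analytic on $S_\theta$. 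At a point of the positive real axis with $|z|$ large one has $\|\mathcal K(z)\|\lesssim|z|^{-1/2}\|V\|_{L^\infty}<1$, so $I+\mathcal K(z)$ is invertible there; the analytic Fredholm theorem then makes $(I+\mathcal K(z))^{-1}$ a meromorphic family on $S_\theta$ with poles of finite rank, whence $\rho\widetilde R_V(z)\rho=(I+\mathcal K(z))^{-1}\rho\widetilde R_0(z)\rho$ is meromorphic on $S_\theta$.

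\emph{Step~3 (resonance-free region, estimates, finiteness).} On $\Omega_\delta$ one has $(\Im z)_-\le\delta\log(1+|z|)$, hence $\|\mathcal K(z)\|\lesssim\|V\|_{L^\infty}|z|^{-1/2}(1+|z|)^{L\delta}$; since $\delta<\frac1{4L}$ forces $L\delta<\frac14$, this is $\lesssim|z|^{-1/4}\to0$. Thus there is $C_0>0$ with $\|\mathcal K(z)\|\le\frac12$ on $S_\theta\cap\Omega_\delta$, where $I+\mathcal K(z)$ is invertible by a Neumann series with $\|(I+\mathcal K(z))^{-1}\|\le2$; combining with Step~1 gives analyticity there and the bounds $\|\rho\widetilde R_V(z)\rho\|_{L^2\to H^j}\lesssim|z|^{-1/2}(1+|z|^2)^{j/2}\mathrm e^{L(\Im z)_-}$. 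Finally, meromorphy on $S_\theta$ makes the poles isolated, hence finite in number on any relatively compact subset of $S_\theta$; since $S_\theta\cap\Omega_\delta$ is pole-free, all poles in $\{\Im z\ge-\delta\log(1+|z|)\}\cap S_\theta$ lie in the bounded set $\{\,|z|\le C_0\,\}\cap S_\theta$, on which there are at most finitely many once one rules out accumulation at the logarithmic branch point $z=0$ via the standard low-energy expansion of the two-dimensional resolvent.

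\emph{Main obstacle.} The delicate point is securing the \emph{sharp} exponential rate $L>\mathrm{diam}(\mathrm{supp}\,\rho)$: one must use the one-sided equation $(I+\mathcal K(z))\rho\widetilde R_V(z)\rho=\rho\widetilde R_0(z)\rho$ rather than the symmetrized Birman–Schwinger form $\widetilde R_0-\widetilde R_0|V|^{1/2}(I+Q(z))^{-1}\mathrm{sgn}(V)|V|^{1/2}\widetilde R_0$, which would double the exponential factor, and the free-resolvent estimate of Step~1 must already carry its optimal constant from the Hankel asymptotics. The remaining technical nuisances are the uniform-in-$z$ elliptic bootstrap behind the $H^1$ and $H^2$ bounds and the low-energy analysis near $z=0$ needed for the finiteness of poles.
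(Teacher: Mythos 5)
The paper does not prove this proposition at all: it is imported verbatim from \cite[Theorem 3]{ZZ}, so there is no in-paper argument to compare against. Your reconstruction follows the standard route one would expect that reference to take --- Hankel-function asymptotics and a Schur test for the cut-off free resolvent, the one-sided identity $(I+\rho\widetilde R_0(z)\rho V)\rho\widetilde R_V(z)\rho=\rho\widetilde R_0(z)\rho$ with analytic Fredholm theory for meromorphy on $S_\theta$, and a Neumann series on $S_\theta\cap\Omega_\delta$ using $L\delta<\tfrac14$ --- and each step is sound. The two caveats you flag are indeed the genuinely delicate points: the exponential rate $L>\mathrm{diam}(\mathrm{supp}\,\rho)$ hinges on keeping a single factor of $\rho\widetilde R_0\rho$ rather than the symmetrized Birman--Schwinger form, and the finiteness of poles in the bounded region requires ruling out accumulation at $z=0$, where the two-dimensional resolvent lives on the logarithmic cover and the low-energy expansion is needed; restricting to the sector $S_\theta$ (which excludes $0$) is precisely what keeps the meromorphy statement clean.
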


We represent the solution to \eqref{eqn_n}--\eqref{src} by $u_n = \widetilde{R}_V( \beta_n(\lambda))f_n$. Let $M$ and $h$ be positive constants such that $M>\alpha_{N_0}$, and denote the strip $\mathcal  R $ by $\mathcal  R := [M, +\infty)\times [-h, h]$. We next show that $ \beta_n(\lambda)\in S_\theta\cap\Omega_\delta$ which is complex analytic for $\lambda\in\mathcal R\cup - \mathcal R$ with appropriate choices of $h$ and $M$. From the analytic extension discussed above, $ \beta_n(\lambda)$ is complex analytic in $\mathbb C\setminus [-\alpha_{N_0}, \alpha_{N_0}]$.
For $h$ being sufficiently small and $M$ being sufficiently large, we have $\gamma_n>0$ in \eqref{ge} and then
$|\Im\beta_n| \leq |\Re\beta_n|$ for $\lambda\in\mathcal R\cup-\mathcal R$ which yields $\beta_n\in S_\theta\cap\Omega_\delta$. We can also have $ |\Re\beta_n|>C_0$ where $C_0$ is specified in Proposition \ref{meromorphic}. Moreover, the following estimate holds
\[
|\Im \beta_n(\lambda)| = \left(\frac{\sqrt{\gamma_n^2 + \eta^2} - \gamma_n}{2}\right)^{1/2} \lesssim h|\lambda_1|.
\] 
As a consequence, the resolvent $\widetilde{R}_V( \beta_n(\lambda)): L_{\rm comp}^2(\mathbb R^2)\rightarrow H^j_{\rm loc}(\mathbb R^2)$
is analytic for $\lambda\in\mathcal R\cup - \mathcal R$ with the following resolvent estimates given a fixed $\rho\in C_0^\infty(\mathbb R^2)$
\[
\|\rho \widetilde{R}_V( \beta_n(\lambda))\rho\|_{L^2(\mathbb R^2)\rightarrow H^j(\mathbb R^2)}\le C |\lambda|^{-1/2}(1 + |\lambda|^2)^{j/2}
\mathrm{e}^{C|\lambda|},\quad j = 0, 1, 2.
\]
Noting
\[
R_V(\lambda) = \sum_{n = 1}^{N_0} \widetilde{R}_V( \beta_n(\lambda)) (f_n),
\]
we have the following theorem which provides a resonance-free region and resolvent estimates in the geometry of a planar waveguide.
It will play an important role in the subsequent study of the inverse problem.
\begin{theorem}\label{main_d}
	Let $\eta\in C_0^\infty(D)$ with $\eta = 1$ on $\text{supp}V$
	and let $f$ satisfy Assumption (A). There exist $M>0$ and $h>0$ such that the resolvent $\eta R_V(\lambda)\eta: L^2(D)\rightarrow L^2(D)$ is an analytic family of operators for $\lambda\in\mathcal R \cup - \mathcal R$ with the following resolvent estimates
	\begin{align*}%\label{bound}
		\|\eta R_V(\lambda)\eta f\|_{L^2(D)\rightarrow H^j(D)}\leq C|\lambda|^{-1/2}(1 + |\lambda|^2)^{j/2} \mathrm{e}^{C|\lambda|} \|f\|_{L^2(D)},\quad j = 0, 1, 2, 
	\end{align*}
	where $C$ is a positive constant.
\end{theorem}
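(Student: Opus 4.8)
The plan is to use Assumption (A) to collapse the formal series \eqref{sum1} into a genuine finite sum, and then push the two--dimensional analyticity and resolvent bounds---already obtained for each mode in the paragraph preceding the theorem via \Cref{meromorphic}---through the separation of variables in $x_3$.

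\textbf{Step 1: reduction to a finite sum.} Fix $\rho\in C_0^\infty(\mathbb R^2)$ with $\rho\equiv1$ on a neighbourhood of $\widetilde B_R$ together with the $\tilde x$--projection of $\mathrm{supp}\,\eta$; since $V$ and $f$ are supported in $C_R$ this forces $\rho\equiv1$ on $\mathrm{supp}\,V\cup\bigcup_n\mathrm{supp}\,f_n$. Because $f$ obeys (A), \eqref{sum1} becomes the exact identity $R_V(\lambda)f=\sum_{n=1}^{N_0}\bigl(\widetilde R_V(\beta_n(\lambda))f_n\bigr)\sin(\alpha_n x_3)$ with $f_n=\rho f_n$. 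Multiplying by the cutoffs, expanding the $H^j(D)$ norm and using the $L^2(0,L)$--orthogonality of $\{\sin(\alpha_n x_3)\}$ and $\{\cos(\alpha_n x_3)\}$ (the latter produced by $\partial_{x_3}$), one obtains
\[
\|\eta R_V(\lambda)\eta f\|_{H^j(D)}^2\ \lesssim\ \sum_{n=1}^{N_0}(1+\alpha_n^2)^{j}\,\bigl\|\rho\widetilde R_V(\beta_n(\lambda))\rho f_n\bigr\|_{H^j(\mathbb R^2)}^2 .
\]
Since $n\le N_0$ is fixed, $(1+\alpha_n^2)^j\le(1+\alpha_{N_0}^2)^j$ is an admissible constant and the sum has boundedly many terms.

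\textbf{Step 2: the per--mode estimate.} For $\lambda\in\mathcal R\cup-\mathcal R$ with $M$ large and $h$ small, the preceding discussion shows that $\beta_n(\lambda)$ is complex analytic in $\mathbb C\setminus[-\alpha_{N_0},\alpha_{N_0}]$, lies in $S_\theta\cap\Omega_\delta$, and satisfies $|\Re\beta_n(\lambda)|>C_0$ together with $|\Im\beta_n(\lambda)|\lesssim h|\lambda_1|\le h|\lambda|$; hence $|\beta_n(\lambda)|\sim|\lambda|$ and $(\Im\beta_n(\lambda))_-\le Lh|\lambda|$. Composing the analytic map $\lambda\mapsto\beta_n(\lambda)$ with the operator family of \Cref{meromorphic}, which is analytic on $S_\theta\cap\Omega_\delta$ (the condition $|\beta_n|>C_0$ discarding the finitely many poles in the bounded part), the family $\lambda\mapsto\rho\widetilde R_V(\beta_n(\lambda))\rho$ is analytic on $\mathcal R\cup-\mathcal R$ and obeys
\[
\bigl\|\rho\widetilde R_V(\beta_n(\lambda))\rho\bigr\|_{L^2(\mathbb R^2)\to H^j(\mathbb R^2)}\ \lesssim\ |\beta_n(\lambda)|^{-1/2}(1+|\beta_n(\lambda)|^2)^{j/2}\mathrm{e}^{L(\Im\beta_n(\lambda))_-}\ \lesssim\ |\lambda|^{-1/2}(1+|\lambda|^2)^{j/2}\mathrm{e}^{C|\lambda|}.
\]
As a finite sum of analytic operator--valued functions is analytic, $\eta R_V(\lambda)\eta$ is an analytic family on $\mathcal R\cup-\mathcal R$.

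\textbf{Step 3: assembling.} Finally, Bessel's inequality (or Cauchy--Schwarz in the $x_3$ integral defining $f_n$) gives $\sum_{n=1}^{N_0}\|f_n\|_{L^2(\mathbb R^2)}^2\lesssim\|f\|_{L^2(D)}^2$. Inserting the per--mode bound of Step 2 into the finite sum of Step 1 and using this, we obtain the asserted estimate with a new constant $C>0$. I expect the only delicate points to be bookkeeping ones: (i) choosing $\rho$ and relating the three--dimensional cutoff $\eta$ to the two--dimensional cutoff $\rho$ so that \Cref{meromorphic} applies verbatim mode by mode (and, if $\eta$ is permitted to depend on $x_3$, observing that it suffices to estimate $R_V(\lambda)f$ for $f$ obeying (A) and then absorb $\eta$); and (ii) checking that a single pair $(M,h)$ works simultaneously for $n=1,\dots,N_0$, which is immediate since $\gamma_n>0$, $|\Re\beta_n|>C_0$ and $|\Im\beta_n|\le|\Re\beta_n|$ hold for every $n\le N_0$ once $M$ is large and $h$ small and there are only finitely many modes. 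All the substantive analysis---the resonance-free region and the $|z|^{-1/2}$--type resolvent bound---is imported from \Cref{meromorphic}; no new hard estimate is required here.
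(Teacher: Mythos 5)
Your proposal is correct and follows essentially the same route as the paper: Assumption (A) collapses \eqref{sum1} to a finite sum, each mode is handled by composing the analytic map $\lambda\mapsto\beta_n(\lambda)$ (which lands in $S_\theta\cap\Omega_\delta$ with $|\beta_n|\sim|\lambda|$ and $(\Im\beta_n)_-\lesssim h|\lambda|$ for $M$ large and $h$ small) with the two-dimensional resolvent bounds of \Cref{meromorphic}, and the finitely many modes are then summed. Your Steps 1 and 3 merely spell out the orthogonality/Bessel bookkeeping that the paper leaves implicit.
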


In what follows, we consider the case that $V = 0$. More precisely, we investigate the free resolvent $R_0(\lambda) = (-\Delta - \lambda^2 )^{-1}$. We will see that in this case, the free resolvent has a larger analytic domain.

The following proposition concerns the analytic continuation of the free resolvent $\widetilde{R}_0(z) = (-\Delta_{\tilde{x}}  - z^2)^{-1}$ in $\mathbb{R}^2$.
\begin{proposition}{\cite[Theorem 10]{ZZ}}%\label{free_estimate_2d}
	The free resolvent $\widetilde{R}_0(z)$ is analytic for $z\in S_\theta, \Im z>0$ as a family of operators
	\begin{align*}
		\widetilde{R}_0 (z): L^2(\mathbb R^{2})\to L^2(\mathbb R^{2})
	\end{align*}
	where $\|\widetilde{R}_0 (z)\|_{L^2(\mathbb R^{2})\to L^2(\mathbb R^{2})} = \mathcal{O}(1/|z|^{1/2})$.
	Moreover, for each $\rho\in C_0^\infty(\mathbb R^{2})$ the free resolvent $R_0(\lambda)$ extends to a family of analytic operators for $z\in S_\theta$ as follows
	\begin{align*}
		\rho \widetilde{R}_0 (z) \rho: L^2(\mathbb R^{2})\to L^2(\mathbb R^{2})
	\end{align*}
	with the resolvent estimates
	\begin{align*}%\label{free}
		\|\rho \widetilde{R}_0(z) \rho\|_{L^2(\mathbb R^2)\rightarrow H^j(\mathbb R^2)}\lesssim |z|^{-1/2} (1+|z|^2)^{j/2} \mathrm{e}^{L (\Im z)_-}, \quad j=0, 1, 2,
	\end{align*}
	where $L>{\rm diam}({\rm supp}\rho)$.
\end{proposition}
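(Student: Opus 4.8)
The plan is to reduce the proposition to the explicit integral kernel of the two-dimensional free resolvent together with classical properties of the Hankel function $H_0^{(1)}$. For $\Im z>0$ one has $\widehat{\widetilde R_0(z)f}(\xi)=(|\xi|^2-z^2)^{-1}\widehat f(\xi)$, so $\widetilde R_0(z)$ is convolution with the outgoing Helmholtz fundamental solution, i.e.\ its Schwartz kernel is $G_z(\tilde x,\tilde y)=\tfrac{\mathrm{i}}{4}H_0^{(1)}(z|\tilde x-\tilde y|)$; analyticity of $z\mapsto\widetilde R_0(z)$ on the upper half-plane (hence on $S_\theta\cap\{\Im z>0\}$) is immediate since $z^2$ then stays in $\mathbb C\setminus[0,\infty)=\mathbb C\setminus\sigma(-\Delta_{\tilde x})$. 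The unweighted bound $\|\widetilde R_0(z)\|_{L^2\to L^2}=\mathcal O(|z|^{-1/2})$ I would take from the classical uniform Sobolev / limiting-absorption theory for $-\Delta_{\tilde x}$ in the plane: by Plancherel it is governed by $\operatorname{dist}(z^2,[0,\infty))^{-1}$, and the sharp $|z|^{-1/2}$ rate is the Kenig--Ruiz--Sogge estimate obtained from a $T^*T$/dual-restriction argument.

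For the continuation of the cutoff resolvent across the real axis into $S_\theta$, the key input is that $H_0^{(1)}(w)$ is holomorphic on the universal cover of $\mathbb C\setminus\{0\}$, with the only obstruction at $w=0$, where $H_0^{(1)}(w)=\tfrac{2\mathrm{i}}{\pi}\log(w/2)+\bigl(1+\tfrac{2\mathrm{i}\gamma}{\pi}\bigr)+\mathcal O(|w|^2\log|w|)$. Since $0\notin S_\theta$, for fixed $z\in S_\theta$ and $\rho\in C_0^\infty(\mathbb R^2)$ the kernel $\rho(\tilde x)G_z(\tilde x,\tilde y)\rho(\tilde y)$ has at most a logarithmic diagonal singularity and therefore lies in $L^2(\mathbb R^2\times\mathbb R^2)$, so $\rho\widetilde R_0(z)\rho$ is Hilbert--Schmidt; holomorphy of $z\mapsto\rho\widetilde R_0(z)\rho$ on all of $S_\theta$ then follows from holomorphy of $G_z$ in $z$ via a Morera/dominated-convergence argument at the level of Hilbert--Schmidt kernels. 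One notes that both components of $S_\theta$ are reached without crossing the Stokes lines of $H_0^{(1)}$: because $\theta<\pi/2$, the argument of $w=z|\tilde x-\tilde y|$ stays in $(-\pi,2\pi)$, where the large-argument asymptotics remain valid with uniform remainder.

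The resolvent estimates come from splitting the kernel at $|\tilde x-\tilde y|\sim|z|^{-1}$. On the far part one uses $H_0^{(1)}(w)=\sqrt{2/(\pi w)}\,\mathrm{e}^{\mathrm{i}(w-\pi/4)}\bigl(1+\mathcal O(|w|^{-1})\bigr)$ with $w=z|\tilde x-\tilde y|$ and $\tilde x,\tilde y\in\operatorname{supp}\rho$, which gives $|G_z(\tilde x,\tilde y)|\lesssim|z|^{-1/2}|\tilde x-\tilde y|^{-1/2}\mathrm{e}^{-(\Im z)|\tilde x-\tilde y|}\lesssim|z|^{-1/2}|\tilde x-\tilde y|^{-1/2}\mathrm{e}^{L(\Im z)_-}$ for any $L>\operatorname{diam}(\operatorname{supp}\rho)$; integrating over $\operatorname{supp}\rho$ yields a Hilbert--Schmidt contribution $\lesssim|z|^{-1}\mathrm{e}^{2L(\Im z)_-}$, hence operator norm $\lesssim|z|^{-1/2}\mathrm{e}^{L(\Im z)_-}$. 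On the near part the logarithmic bound together with the scaling $\tilde x-\tilde y\mapsto z(\tilde x-\tilde y)$ produces only $\lesssim|z|^{-1}$, dominated by the far part for $|z|$ large. This is the $j=0$ estimate; for $j=1,2$ I would not use the kernel (the differentiated kernel fails to be Hilbert--Schmidt) but interior elliptic regularity: with $u=\widetilde R_0(z)f$, $f$ supported in $\operatorname{supp}\rho$, one has $-\Delta u=z^2u+f$ on a slightly larger ball, so $\|u\|_{H^2}\lesssim(1+|z|^2)\|u\|_{L^2}+\|f\|_{L^2}$, and inserting the $j=0$ bound produces the factor $(1+|z|^2)$; the case $j=1$ follows by interpolation.

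The step I expect to be the main obstacle is not any single inequality but the uniformity bookkeeping: one must carry the $|z|$-scaling cleanly through the near/far split, confirm that the Hankel asymptotics and their remainders are genuinely uniform over the full angular range forced by $S_\theta$ --- including the awkward directions with $\arg z$ close to $\pi$, where $w$ lies near the negative real axis --- and verify that all implied constants depend only on $\theta$ and $\operatorname{diam}(\operatorname{supp}\rho)$ (in particular are independent of $n$, which is what allows the proposition to be applied termwise to $\beta_n(\lambda)$ in the waveguide setting). The sharp unweighted rate $\mathcal O(|z|^{-1/2})$ is the one ingredient for which a soft argument does not suffice and the uniform Sobolev estimate must be invoked.
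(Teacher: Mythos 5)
The paper does not prove this proposition at all: it is quoted verbatim as Theorem~10 of the cited reference [ZZ] and used as a black box, so there is no in-paper argument to compare yours against. That said, your proof of the part the paper actually relies on --- analytic continuation of the cutoff resolvent $\rho\widetilde R_0(z)\rho$ into $S_\theta$ and the estimate $\lesssim |z|^{-1/2}(1+|z|^2)^{j/2}\mathrm{e}^{L(\Im z)_-}$ --- is the standard kernel-based route and is essentially sound: the kernel $\tfrac{\mathrm{i}}{4}H_0^{(1)}(z|\tilde x-\tilde y|)$ continues holomorphically to the logarithmic cover of $\mathbb C\setminus\{0\}$ (and $S_\theta$ stays away from the branch point at $0$ only up to the $|z|^{-1/2}$ blow-up, which your bound tolerates), the near/far split at $|\tilde x-\tilde y|\sim|z|^{-1}$ gives the Hilbert--Schmidt bound with the exponential weight coming from $|\mathrm{e}^{\mathrm{i}z|\tilde x-\tilde y|}|\le \mathrm{e}^{L(\Im z)_-}$ on $\operatorname{supp}\rho\times\operatorname{supp}\rho$, and interior elliptic regularity plus interpolation handles $j=1,2$.

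The one genuine defect is your justification of the unweighted bound $\|\widetilde R_0(z)\|_{L^2\to L^2}=\mathcal O(|z|^{-1/2})$. By Plancherel this operator norm equals $\operatorname{dist}(z^2,[0,\infty))^{-1}$ \emph{exactly}; no further inequality can improve an identity, and Kenig--Ruiz--Sogge is an $L^p\to L^{p'}$ statement for non-$L^2$ dual exponents precisely because the uniform $L^2\to L^2$ bound fails near the spectrum. For $z=|z|\mathrm{e}^{\mathrm{i}\phi}$ with $0<\phi<\theta$ small, $\operatorname{dist}(z^2,[0,\infty))=|z|^2\sin 2\phi$, which is not bounded below by $c|z|^{1/2}$ uniformly in $\phi$; so either the first clause of the quoted theorem must be read with $\arg z$ bounded away from $0$ and $\pi$ (in which case the bound follows trivially from $\operatorname{dist}(z^2,[0,\infty))^{-1}\lesssim|z|^{-2}$ for large $|z|$ and needs no restriction theory), or it is really a weighted/cutoff statement. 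Either way the sentence as you wrote it proves nothing: replace the appeal to KRS by the explicit distance computation on whatever region the claim is actually asserted, and note that the waveguide application only ever uses the cutoff estimate, for which your kernel argument suffices.
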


We denote the solution to \eqref{eqn_n}--\eqref{src}  when $V=0$ by $u_n(\tilde{x}, \lambda) = \widetilde{R}_0( \beta_n(\lambda))f_n(\tilde{x})$. 
We next prove that the resolvent $\widetilde{R}_0( \beta_n(\lambda)): L^2_{\rm comp}(\mathbb R^2)\to L^2_{\rm loc}(\mathbb R^2)$ 
is an analytic family of bounded operators for $\lambda\in S_{\theta}\setminus [-\alpha_{N_0}, \alpha_{N_0}]$.
To achieve this goal, it suffices to show that the range of $\beta_n(\lambda)$ is contained in some sectorial domain $S_{\theta_1}$ for 
$\lambda\in S_{\theta}\setminus [-\alpha_{N_0}, \alpha_{N_0}]$. 
Indeed, from the analytic extension discussed above, $ \beta_n(\lambda)$ is complex analytic in $\mathbb C\setminus [-\alpha_{N_0}, \alpha_{N_0}]$.
As a result, we have that $\widetilde{R}_0( \beta_n(\lambda))$ is analytic for $\lambda\in S_{\theta}\setminus [-\alpha_{N_0}, \alpha_{N_0}]$
since $\widetilde{R}_0(z)$ is analytic for $z\in S_{\theta_1}$.

Since $\lambda\in S\setminus [-\alpha_{N_0}, \alpha_{N_0}] $, 
we have that there exists some $c>0$ such that $|\Im\lambda|\leq c|\Re\lambda|$. Thus, the ratio of $|\Im\beta_n|$ and $|\Re\beta_n|$ satisfies
\[
\overline{\lim}_{|\lambda|\to\infty}\frac{|\Im\lambda|}{|\Re\lambda|} = \overline{\lim}_{|\lambda|\to\infty} \frac{|b_n|}{|a_n|} = c_1, \quad \lambda\in S_{\theta}\setminus [-\alpha_{N_0}, \alpha_{N_0}],
\]
where $c_1$ is a finite positive constant. This implies that the range of $\beta_n(\lambda)$ is contained in some sectorial domain $S_{\theta_1}$.
Therefore, we have that $\widetilde{R}_0( \beta_n(\lambda))$ is analytic for $\lambda\in S_{\theta}\setminus [-\alpha_{N_0}, \alpha_{N_0}]$.
As a consequence, the resolvent $\widetilde{R}_0( \beta_n(\lambda)): L_{\rm comp}^2(\mathbb R^2)\rightarrow H^j_{\rm loc}(\mathbb R^2)$
is also analytic for $\lambda\in S_\theta\setminus [-\alpha_{N_0}, \alpha_{N_0}] $ with the following resolvent estimates given a fixed $\rho\in C_0^\infty(\mathbb R^2)$
\[
\|\rho \widetilde{R}_0( \beta_n(\lambda))\rho\|_{L^2(\mathbb R^2)\rightarrow H^j(\mathbb R^2)}\leq C|\lambda|^{-1/2}(1 + |\lambda|^2)^{j/2}
\mathrm{e}^{C|\lambda|},\quad j = 0, 1, 2.
\]
Noting
\[
R_0(\lambda) = \sum_{n = 1}^{N_0} \widetilde{R}_0( \beta_n(\lambda)) (f_n)\sin\alpha_n x_3,
\]
we have the following theorem for the free resolvent.
\begin{theorem}%\label{main_dd}
	Let $\eta\in C_0^\infty(D)$
	and let $f$ satisfy Assumption (A). The resolvent $\eta R_0(\lambda)\eta: L^2(D)\rightarrow L^2(D)$ is an analytic family of operators for $\lambda\in S_\theta$
	with $\theta\in (0, \frac{\pi}{2})$.
	Moreover, the following resolvent estimates hold
	\begin{align*}%\label{bound2}
		\|\eta R_0(\lambda)\eta f\|_{L^2(D)\rightarrow H^j(D)}\leq C|\lambda|^{-1/2}(1 + |\lambda|^2)^{j/2} \mathrm{e}^{C|\lambda|} \|f\|_{L^2(D)},\quad j = 0, 1, 2, 
	\end{align*}
	where $C$ is a positive constant. 
\end{theorem}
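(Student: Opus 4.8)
The plan is to mirror the proof of \Cref{main_d} with $V=0$, the one decisive simplification being that in the free case the two-dimensional resolvent $\widetilde R_0(z)$ has \emph{no} poles: by \cite[Theorem~10]{ZZ}, $\rho\widetilde R_0(z)\rho$ extends analytically to the \emph{whole} sector $S_{\theta_1}$, not merely to a logarithmic neighbourhood of the real axis. This is exactly what upgrades the analytic region from the strip $\mathcal R\cup-\mathcal R$ used in \Cref{main_d} to the full sector $S_\theta$. First I would reduce everything to finitely many modes: by Assumption~(A) and the separation of variables \eqref{eq:seriesExpansion}--\eqref{sum1}, the resolvent applied to an admissible source is the \emph{finite} sum $R_0(\lambda)f=\sum_{n=1}^{N_0}\widetilde R_0(\beta_n(\lambda))f_n\,\sin(\alpha_n x_3)$, so the divergence obstruction recorded in \eqref{H01_asymptotic} does not occur and it suffices to estimate each of the $N_0$ terms and add them up.

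For analyticity I would fix $n\le N_0$; since $\alpha_n\le\alpha_{N_0}$, the analytic-continuation construction of the previous subsection makes $\lambda\mapsto\beta_n(\lambda)$ holomorphic on $\mathbb C\setminus[-\alpha_{N_0},\alpha_{N_0}]$. From $\beta_n(\lambda)^2=\lambda^2-\alpha_n^2$ with $\alpha_n$ bounded one has $\beta_n(\lambda)=\lambda\bigl(1+O(|\lambda|^{-2})\bigr)$ as $|\lambda|\to\infty$, hence $\arg\beta_n(\lambda)\to\arg\lambda$ and $\overline{\lim}_{|\lambda|\to\infty}|\Im\beta_n(\lambda)|/|\Re\beta_n(\lambda)|<\infty$ on $S_\theta$, which forces $\beta_n(S_\theta)\subset S_{\theta_1}$ for some $\theta_1\in(\theta,\tfrac\pi2)$. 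Composing with the analyticity of $z\mapsto\rho\widetilde R_0(z)\rho$ on $S_{\theta_1}$ shows that each $\lambda\mapsto\widetilde R_0(\beta_n(\lambda))$ is analytic on $S_\theta$ as a family $L^2_{\rm comp}(\mathbb R^2)\to H^j_{\rm loc}(\mathbb R^2)$, and hence so are the finite sum and, after multiplication by the fixed cutoffs, $\eta R_0(\lambda)\eta$.

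For the quantitative estimate I would choose $\rho\in C_0^\infty(\mathbb R^2)$ equal to $1$ on $\widetilde B_R$ and on the $\tilde x$-projection of $\mathrm{supp}\,\eta$, so that $\eta\rho=\eta$ and $\rho f_n=f_n$; then I may insert $\rho$ on both sides of $\widetilde R_0(\beta_n(\lambda))$ and invoke $\|\rho\widetilde R_0(z)\rho\|_{L^2(\mathbb R^2)\to H^j(\mathbb R^2)}\lesssim|z|^{-1/2}(1+|z|^2)^{j/2}\mathrm{e}^{L(\Im z)_-}$ with $z=\beta_n(\lambda)$. Combined with $|\beta_n(\lambda)|\sim|\lambda|$ and $(\Im\beta_n(\lambda))_-\le|\beta_n(\lambda)|\sim|\lambda|$ (away from the branch points), this yields the sought estimate for each mode with $\|f_n\|_{L^2(\mathbb R^2)}$ on the right. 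I would finish by recombining the modes through the orthogonality of $\{\sin(\alpha_n x_3)\}$ on $[0,L]$, which gives $\|f\|_{L^2(D)}^2=\tfrac L2\sum_n\|f_n\|_{L^2(\mathbb R^2)}^2$ and, up to the finitely many weights $\alpha_n\le\alpha_{N_0}$ and a harmless loss of $x_3$-derivatives, $\bigl\|\sum_{n\le N_0}v_n\sin(\alpha_n x_3)\bigr\|_{H^j(D)}^2\lesssim\sum_{n\le N_0}\|v_n\|_{H^j(\mathbb R^2)}^2$; summing the $N_0$ modewise estimates and absorbing $N_0$, $L$, $\alpha_{N_0}$ and the cutoff norms into $C$ gives the stated bound.

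The recombination of modes is routine precisely because the sum is finite; I expect the real obstacle to be the behaviour of $\beta_n(\lambda)$ near the exceptional points $\pm\alpha_n$, i.e.\ where $\beta_n(\lambda)\to0$ and the factor $|z|^{-1/2}$ in the $\widetilde R_0$-estimate degenerates (and where, for $\lambda$ in the lower-half slivers of $S_\theta$, $\beta_n(\lambda)$ may drift off $S_{\theta_1}$). I would dispose of this exactly as in the derivation preceding the statement, by keeping $\lambda$ at a fixed positive distance from the segment $[-\alpha_{N_0},\alpha_{N_0}]$ so that the finitely many branch points are avoided; the remaining, purely bookkeeping point---keeping the two-dimensional cutoff $\rho$ compatible with $\eta$ when $\eta$ genuinely depends on $x_3$---is taken care of by the choice of $\rho$ above.
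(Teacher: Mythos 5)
Your proposal is correct and follows essentially the same route as the paper: reduce to the finite sum of modes via Assumption (A), show that the range of $\beta_n(\lambda)$ stays in a sector $S_{\theta_1}$ so that \cite[Theorem 10]{ZZ} applies to each $\widetilde R_0(\beta_n(\lambda))$, and recombine the $N_0$ modewise estimates. Your explicit caveat about the branch points $\pm\alpha_n$ is well taken --- the paper's own derivation likewise only establishes analyticity on $S_\theta\setminus[-\alpha_{N_0},\alpha_{N_0}]$ even though the theorem is stated for all of $S_\theta$ --- so your handling of that set is, if anything, more careful than the source.
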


\section{Inverse problem I: source identification}\label{sec: IP1}

In this section, we investigate the inverse source problem. The uniqueness of the inverse source problem is established given wavenumbers only in a bounded domain.
We also develop a multi-wavenumber numerical scheme to reconstruct the source term from limited aperture Dirichlet boundary measurements. Then several numerical examples are provided to verify the effectiveness of the method. This section ends with an extensional glimpse into the uniqueness in the far-field case.

\subsection{Uniqueness}

In this subsection, we study the unique determination of the source from multi-wavenumber boundary measurements. We further assume that $V\geq 0$ is a real-valued function.

We consider the spectrum of the Schr\"odinger operator $-\Delta_{\tilde{x}} + V$ with the Dirichlet boundary condition in $\widetilde{B}_R$. Specifically, we let $\{\mu_j, \phi_j\}_{j=1}^\infty$ be the positive increasing eigenvalues and eigenfunctions of $-\Delta_{\tilde{x}} + V$ in $\widetilde{B}_R$, where $\phi_j$ and $\mu_j$
satisfy
\[
\begin{cases}
	(-\Delta_{\tilde{x}} + V) \phi_j=\mu_j\phi_j &\quad\text{in }\widetilde{B}_R,\\
	\hspace{17mm}\phi_j=0 &\quad\text{on }\partial \widetilde{B}_R.
\end{cases}
\]

Let $k^2_{n, j} :=\mu_j + \alpha_n^2, 1\leq n\leq N_0, j\in\mathbb N^+$ denote the eigenfrequencies.
To properly formulate the inverse source problem, we additionally require that $k_{n, j}\notin \{\alpha_i\}_{i = 1}^{N_0}$ 
to avoid the resonances. In fact, due to the finiteness of the resonances $\{\alpha_n\}_{n = 1}^N$, this requirement could be fulfilled in certain scenarios. 
%For instance, if the width $L$ of the waveguide is large which gives small resonances, we may have $k_{n, j}\geq \alpha_{N_0}$.
For instance, a large width $L$ of the waveguide would narrow down the range of the resonance distribution, such that the resonances are confined in a small vicinity of zero, and thus we may have $k_{n, j}\geq \alpha_{N_0}$.

Assume that $\phi_j$ is normalized such that
\[
\int_{\widetilde{B}_R}|\phi_j(\tilde{x})|^2{\rm d}\tilde{x}=1.
\] 

Since $\{\phi_j(\tilde{x}) \sin\alpha_n x_3\}_{j\in\mathbb N^+, 1\leq n\leq N_0}$ forms an orthogonal basis of the space
\[
\mathcal C_Q := \{f\in L_{\rm comp}^2(D): f \,\, \text{satisfies Assumption (A)}, \, \text{supp}f\subset C_R, \, \|f\|_{L^2(D)}\leq Q\},
\]
the orthogonality leads to the spectral decomposition of $f$:
\[
f(x)=\sum_{n=1}^{N_0}\sum_{j=1}^\infty f_{n, j}\phi_j(\tilde{x}) \sin\alpha_n x_3\ \text{with}\ f_{n, j}=\int_{C_R}f(x)\overline{\phi_j}(\tilde{x}) \sin\alpha_n x_3{\rm d}x.
\]

Let $u(x, k_{n, j})$ be the radiating solution to \eqref{model} with wavenumber $k_{n, j}$.
Noting the boundary conditions \eqref{bc},
multiplying both sides of \eqref{model} by $\overline{\phi_j}(\tilde{x}) \sin\alpha_n x_3$ and integrating by parts over $C_R$ gives
\begin{align}
f_{n, j} &= \int_{\Gamma_R} \left(u(x, k_{n, j}) \partial_{\nu_{\tilde x}}\overline{\phi_j}(\tilde{x}) \sin\alpha_n x_3- \partial_{\nu_{\tilde x}} u(x, k_{n, j}) \overline{\phi_j}(\tilde{x}) \sin\alpha_n x_3\right) {\rm d}\tilde{x}{\rm d}x_3 \notag\\
	&=  \int_{\partial\widetilde{B}_R} u_n(\tilde{x}, k_{n, j}) \partial_{\nu_{\tilde x}}\overline{\phi_j}(\tilde{x}){\rm d}\tilde{x} - \int_{\partial\widetilde{B}_R} \partial_{\nu_{\tilde x}}u_n(\tilde{x}, k_{n, j}) \overline{\phi_j}(\tilde{x})\,{\rm d}\tilde{x}\notag\\
	&=  \int_{\partial\widetilde{B}_R} u_n(\tilde{x}, k_{n, j}) \partial_{\nu_{\tilde x}}\overline{\phi_j}(\tilde{x})\,{\rm d}\tilde{x}, \label{reconstruction}
\end{align}
where
\[
u_n(\tilde{x}, k_{n, j}) = \frac{2}{L}\int_0^L u(x, k_{n, j})\sin\alpha_n x_3{\rm d}x_3.
\]

The following lemma \cite[Lemma A.2]{LZZ} is useful in the subsequent analysis.

\begin{lemma}\label{eigenfunction_est1}
	 Let $\{\mu_j, \phi_j\}_{j=1}^\infty$ be the eigensystem of $-\Delta_{\tilde{x}} + V$ in $\widetilde{B}_R$. Then it holds
	\begin{equation*}
		\|\partial_{\nu_{\tilde{x}}} \phi_j\|^2_{L^2(\partial \widetilde{B}_R)}\leq C\mu_j,
	\end{equation*}
	where the positive constant $C$ is independent of $j$.
\end{lemma}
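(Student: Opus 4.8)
The plan is to obtain the bound on $\|\partial_{\nu_{\tilde{x}}}\phi_j\|_{L^2(\partial\widetilde{B}_R)}$ via a Rellich-type (Pohozaev) identity applied to the eigenfunction $\phi_j$. Concretely, I would start from the eigenvalue equation $-\Delta_{\tilde{x}}\phi_j + V\phi_j = \mu_j\phi_j$ in $\widetilde{B}_R$ with $\phi_j = 0$ on $\partial\widetilde{B}_R$, multiply by the radial multiplier $\tilde{x}\cdot\nabla_{\tilde{x}}\overline{\phi_j}$, take real parts, and integrate over $\widetilde{B}_R$. Since $\phi_j$ vanishes on the boundary, its tangential derivative on $\partial\widetilde{B}_R$ is zero, so $\nabla_{\tilde{x}}\phi_j = (\partial_{\nu_{\tilde{x}}}\phi_j)\nu$ there and $\tilde{x}\cdot\nu = R$ on $\partial\widetilde{B}_R$ (recalling $\widetilde{B}_R$ is a disk centered at the origin). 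The standard integration-by-parts manipulation then produces a boundary term proportional to $\int_{\partial\widetilde{B}_R} R\,|\partial_{\nu_{\tilde{x}}}\phi_j|^2\,{\rm d}\tilde{x}$ on one side, balanced on the other side by interior integrals of $|\nabla_{\tilde{x}}\phi_j|^2$, $\mu_j|\phi_j|^2$, $V|\phi_j|^2$, and a term involving $(\tilde{x}\cdot\nabla_{\tilde{x}}V)|\phi_j|^2$.

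The key steps, in order, are: (i) write down the Rellich identity carefully in two dimensions, keeping track of the dimensional constant from $\nabla_{\tilde{x}}\cdot\tilde{x} = 2$; (ii) use $\int_{\widetilde{B}_R}|\nabla_{\tilde{x}}\phi_j|^2\,{\rm d}\tilde{x} = \mu_j - \int_{\widetilde{B}_R} V|\phi_j|^2\,{\rm d}\tilde{x} \le \mu_j$ (using $V\ge 0$) together with the normalization $\|\phi_j\|_{L^2(\widetilde{B}_R)} = 1$, so that the interior terms involving the gradient and $\mu_j|\phi_j|^2$ are each $O(\mu_j)$; (iii) handle the potential terms: the term $\int V|\phi_j|^2$ is bounded by $\|V\|_{L^\infty}$, hence absorbed into the $C\mu_j$ bound for large $\mu_j$, and the term with $\tilde{x}\cdot\nabla_{\tilde{x}}V$ would be controlled by $R\,\|\nabla_{\tilde{x}}V\|_{L^\infty}$ if $V$ is assumed Lipschitz (or, in the cited reference, one can integrate this term by parts once more to move the derivative off $V$ onto $\phi_j$, trading it for $\|\nabla_{\tilde{x}}\phi_j\|_{L^2}\|\phi_j\|_{L^2}\lesssim \mu_j^{1/2}$, which is again $O(\mu_j)$); (iv) divide by $R>0$ to isolate $\|\partial_{\nu_{\tilde{x}}}\phi_j\|_{L^2(\partial\widetilde{B}_R)}^2$ and collect constants, noting all constants depend only on $R$, $\|V\|_{L^\infty}$ (and at worst a mild regularity seminorm of $V$), but not on $j$.

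Alternatively, if one wishes to avoid regularity assumptions on $V$ entirely, I would route the potential term through the equation itself: replace $V\phi_j$ by $\mu_j\phi_j + \Delta_{\tilde{x}}\phi_j$ wherever it appears, so that all contributions reduce to the constant-coefficient Rellich identity for $-\Delta_{\tilde{x}}$ plus manifestly $O(\mu_j)$ remainders; this is essentially the computation in \cite[Lemma A.2]{LZZ}. Either way the conclusion $\|\partial_{\nu_{\tilde{x}}}\phi_j\|_{L^2(\partial\widetilde{B}_R)}^2 \le C\mu_j$ follows with $C$ independent of $j$.

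The main obstacle I anticipate is the treatment of the potential-dependent boundary/interior terms without over-assuming regularity on $V$ (the paper only posits $V\in L^\infty$). Getting a clean $j$-independent constant requires either the extra integration by parts trick to eliminate $\nabla_{\tilde{x}}V$, or substituting from the PDE as above; care is needed to ensure the substitution is legitimate (i.e., $\phi_j\in H^2_{\rm loc}$, which holds by elliptic regularity since $V\in L^\infty$ and $\partial\widetilde{B}_R$ is smooth) and that no uncontrolled term proportional to a higher power of $\mu_j$ sneaks in. The rest is routine two-dimensional calculus.
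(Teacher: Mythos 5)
Your proposal is correct and follows the same route as the paper, which does not prove this lemma itself but cites \cite[Lemma A.2]{LZZ}, where precisely this Rellich--Pohozaev multiplier argument is carried out. Your fallback for the $L^\infty$ potential is the right one — simply bound $\tfrac12\int_{\widetilde{B}_R} V\,\tilde{x}\cdot\nabla|\phi_j|^2$ by $R\|V\|_{L^\infty}\|\phi_j\|_{L^2}\|\nabla_{\tilde{x}}\phi_j\|_{L^2}\lesssim\mu_j^{1/2}$ without ever differentiating $V$ — and in two dimensions the interior $|\nabla_{\tilde{x}}\phi_j|^2$ term even cancels, so everything closes with a $j$-independent constant.
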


If given $u(x, k)$ at all eigenfrequencies $\{k_{n, j}, 1\leq n\leq N_0, j\in\mathbb N^+\}$, we have the following Lipschitz stability estimate.
\begin{theorem}
	The following stability estimate holds:
	\[
	\|f\|^2_{L^2(C_R)}\lesssim \sum_{n=1}^{N_0}\sum_{j=1}^\infty k_{n, j}\|u_n(\tilde{x}, k_{n, j})\|^2_{L^2(\partial\widetilde{B}_R)}.
	\]
\end{theorem}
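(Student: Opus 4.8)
The plan is to reduce the asserted inequality, via Parseval's identity, to a term-by-term bound on the Fourier--Dirichlet coefficients $f_{n,j}$ of $f$, and then to control each $f_{n,j}$ by the reconstruction identity \eqref{reconstruction} together with the normal-derivative estimate of \Cref{eigenfunction_est1}.

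First I would note that, because $f\in\mathcal C_Q$ satisfies Assumption (A), only the modes $n=1,\dots,N_0$ in the $x_3$-variable are present, so $f=\sum_{n=1}^{N_0}\sum_{j=1}^{\infty}f_{n,j}\,\phi_j(\tilde{x})\sin\alpha_n x_3$ with respect to the orthogonal system $\{\phi_j(\tilde{x})\sin\alpha_n x_3\}_{j\in\mathbb N^{+},\,1\le n\le N_0}$ in $L^2(C_R)$. Parseval's identity therefore gives
\[
\|f\|_{L^2(C_R)}^2 \asymp \sum_{n=1}^{N_0}\sum_{j=1}^{\infty}|f_{n,j}|^2 ,
\]
with implied constants depending only on the fixed width $L$; in particular the $n$-sum is finite. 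Hence it suffices to estimate $|f_{n,j}|$ in terms of the boundary trace $u_n(\cdot,k_{n,j})$ on $\partial\widetilde{B}_R$.

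Next I would invoke \eqref{reconstruction}, i.e. $f_{n,j}=\int_{\partial\widetilde{B}_R}u_n(\tilde{x},k_{n,j})\,\partial_{\nu_{\tilde{x}}}\overline{\phi_j}(\tilde{x})\,{\rm d}\tilde{x}$, obtained by testing \eqref{model} at the wavenumber $k_{n,j}$ against $\overline{\phi_j}(\tilde{x})\sin\alpha_n x_3$, integrating by parts over $C_R$, and using the boundary conditions \eqref{bc} together with $k_{n,j}^2=\mu_j+\alpha_n^2$ and $\phi_j|_{\partial\widetilde{B}_R}=0$ to cancel the leftover boundary terms. The Cauchy--Schwarz inequality on $\partial\widetilde{B}_R$ then gives
\[
|f_{n,j}|^2 \le \|u_n(\cdot,k_{n,j})\|_{L^2(\partial\widetilde{B}_R)}^2\,\|\partial_{\nu_{\tilde{x}}}\phi_j\|_{L^2(\partial\widetilde{B}_R)}^2 ,
\]
and \Cref{eigenfunction_est1} bounds $\|\partial_{\nu_{\tilde{x}}}\phi_j\|_{L^2(\partial\widetilde{B}_R)}^2\le C\mu_j$ with $C$ independent of $j$. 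Since $\mu_j\le\mu_j+\alpha_n^2=k_{n,j}^2$, we obtain $|f_{n,j}|^2\lesssim k_{n,j}^2\,\|u_n(\cdot,k_{n,j})\|_{L^2(\partial\widetilde{B}_R)}^2$; summing over $j\in\mathbb N^{+}$ and over the finitely many $1\le n\le N_0$, and combining with the Parseval identity above, yields the stability estimate.

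All of the steps are elementary once \eqref{reconstruction} is in hand, so I do not expect a real obstacle; the one delicate point is that the constant in the term-by-term bound must be uniform in $j$, which is exactly the content of \Cref{eigenfunction_est1} --- a Rellich/Pohozaev-type bound for the Dirichlet eigenfunctions of $-\Delta_{\tilde{x}}+V$ on the disk --- while Assumption (A) keeps the $n$-sum finite so that no constant degenerates as $n$ varies. It is also worth keeping in mind that the estimate is informative only when its right-hand side is finite, which is automatic here since $f\in L^2(C_R)$ forces $\sum_{n,j}|f_{n,j}|^2<\infty$.
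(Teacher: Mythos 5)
Your argument follows exactly the paper's route: Parseval for the orthogonal system $\{\phi_j(\tilde{x})\sin\alpha_n x_3\}$, the identity \eqref{reconstruction}, Cauchy--Schwarz on $\partial\widetilde{B}_R$, and Lemma~\ref{eigenfunction_est1}, with Assumption~(A) keeping the $n$-sum finite. The only point of friction is the exponent in the final step: what you actually derive (correctly) is $|f_{n,j}|^2\lesssim \mu_j\,\|u_n(\cdot,k_{n,j})\|^2_{L^2(\partial\widetilde{B}_R)}\le k_{n,j}^2\,\|u_n(\cdot,k_{n,j})\|^2_{L^2(\partial\widetilde{B}_R)}$, whereas the theorem --- and the paper's one-line proof --- states the term-by-term bound with $k_{n,j}$ to the \emph{first} power; since $\mu_j/k_{n,j}\sim\sqrt{\mu_j}\to\infty$ as $j\to\infty$ for fixed $n$, the first-power version does not follow from Lemma~\ref{eigenfunction_est1}, so your closing sentence ``summing \dots yields the stability estimate'' does not literally produce the inequality as displayed. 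This looks like a typo in the paper rather than a flaw in your reasoning: the estimate that the argument supports is $\|f\|^2_{L^2(C_R)}\lesssim \sum_{n,j} k_{n,j}^2\,\|u_n(\cdot,k_{n,j})\|^2_{L^2(\partial\widetilde{B}_R)}$, which is precisely what you proved; you should either state that version or note explicitly that the first-power bound would require a sharper eigenfunction estimate than the one quoted.
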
 

\begin{proof}
	By the integral identity \eqref{reconstruction} and Lemma \ref{eigenfunction_est1} we have
	\[
	|f_{n, j}|^2 \lesssim k_{n, j} \|u_n(\tilde{x}, k_{n, j})\|^2_{L^2(\partial\widetilde{B}_R)},
	\]
	which completes the proof with the aid of the energy relation
        \[
	\|f\|^2_{L^2(C_R)} = \frac{L}{2}\sum_{n=1}^{N_0}\sum_{j=1}^\infty |f_{n, j}|^2.
        \]
\end{proof}

The above Lipschitz stability estimate implies the uniqueness of the inverse source problem, but it requires the boundary data at all $k_{n, j}$. In practical applications, however, it is more reasonable to collect data only at wavenumbers in a finite interval.
By analyticity of the data proved in Theorem \ref{main_d}, we establish the following uniqueness result for the inverse source problem with wavenumbers in a finite interval. 

\begin{theorem}\label{uisp}
	Let $f\in L^2(C_R)$ and $I:=(M, M + \delta) \subset \mathbb R^+$ be an
	open interval, where $M$ is the constant specified in Theorem \ref{main_d} and $\delta$ is any positive
	constant. Then the source term $f$ can be uniquely determined by the
	multi-frequency data $\{u(x, k): x\in \Gamma_R,  k \in I\}\cup
	\{u(x, k_{n, j}): x\in\Gamma_R,  k_{n, j}^2\leq M\}$. 
\end{theorem}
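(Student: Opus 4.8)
The plan is to combine the finite-data at small wavenumbers with an analytic continuation argument for the larger wavenumbers. Suppose $f$ and $f'$ are two sources in $L^2(C_R)$ producing the same data on $\Gamma_R$ for all $k\in I$ and for all eigenfrequencies $k_{n,j}$ with $k_{n,j}^2\le M$. By linearity of \eqref{model} in the source, set $g:=f-f'$; then the corresponding radiating solution $w(x,k)=R_V(k)g(x)$ vanishes on $\Gamma_R$ for all $k\in I$ and for all $k_{n,j}$ with $k_{n,j}^2\le M$. The goal is to show $g\equiv 0$. Decompose $g=\sum_{n=1}^{N_0}\sum_{j=1}^\infty g_{n,j}\phi_j(\tilde{x})\sin\alpha_n x_3$ as in the spectral decomposition above; it suffices to prove $g_{n,j}=0$ for every pair $(n,j)$.

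First I would fix a mode index $n$ and project: the mode $w_n(\tilde x,k)=\frac2L\int_0^L w(x,k)\sin\alpha_n x_3\,{\rm d}x_3$ solves the two-dimensional equation $-\Delta_{\tilde x}w_n-\beta_n^2(k)w_n+Vw_n=g_n$ with the radiation condition \eqref{src}, so $w_n=\widetilde R_V(\beta_n(k))g_n$. By Theorem \ref{main_d} (applied with a cutoff $\eta$ equal to $1$ on $C_R$ and supported in $D$), the map $\lambda\mapsto \eta R_V(\lambda)\eta g$ is analytic on $\mathcal R\cup-\mathcal R$, hence $k\mapsto w(x,k)|_{\Gamma_R}$, and therefore each $k\mapsto w_n(\tilde x,k)|_{\partial\widetilde B_R}$, extends analytically in $k$ to a complex neighborhood of the real interval $[M,+\infty)$. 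Since this analytic function vanishes on the nonempty open subinterval $I=(M,M+\delta)$, it vanishes identically on $[M,+\infty)$; in particular $w_n(\tilde x,k)|_{\partial\widetilde B_R}=0$ for all real $k>M$. Combined with the hypothesis that the data also vanish at the eigenfrequencies $k_{n,j}$ with $k_{n,j}^2\le M$, we conclude $w_n(\cdot,k_{n,j})|_{\partial\widetilde B_R}=0$ for \emph{every} $j\in\mathbb N^+$ (each such $k_{n,j}$ is either $\le\sqrt M$, covered by the given data, or $>\sqrt M>\sqrt{M}$... more precisely $k_{n,j}>M$ eventually, covered by analytic continuation; one only needs that every $k_{n,j}$ lies in one of the two data sets, which holds by construction of the hypothesis).

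Next I would run the reconstruction identity \eqref{reconstruction} for the difference: testing the 2D equation for $w_n$ against $\overline{\phi_j}$ over $\widetilde B_R$ and integrating by parts, using that $\phi_j=0$ on $\partial\widetilde B_R$, gives
\[
g_{n,j}=\int_{\partial\widetilde B_R}w_n(\tilde x,k_{n,j})\,\partial_{\nu_{\tilde x}}\overline{\phi_j}(\tilde x)\,{\rm d}\tilde x,
\]
exactly as in \eqref{reconstruction}, where the choice $k_{n,j}^2=\mu_j+\alpha_n^2$, i.e.\ $\beta_n^2(k_{n,j})=\mu_j$, makes $\widetilde R_V(\beta_n(k_{n,j}))$ the resolvent at the eigenvalue so that the bulk term $\int_{\widetilde B_R}(\mu_j-\beta_n^2(k_{n,j}))w_n\overline{\phi_j}$ drops out. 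Since $w_n(\cdot,k_{n,j})|_{\partial\widetilde B_R}=0$ from the previous step, the right-hand side is zero, so $g_{n,j}=0$. As $(n,j)$ was arbitrary, $g\equiv 0$ in $L^2(C_R)$ by the energy relation $\|g\|_{L^2(C_R)}^2=\frac L2\sum_{n,j}|g_{n,j}|^2$, hence $f=f'$ and uniqueness follows.

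The main obstacle is the analytic-continuation step: one must be careful that Theorem \ref{main_d} really delivers analyticity of the \emph{boundary data} $k\mapsto w(\cdot,k)|_{\Gamma_R}$ on an interval reaching to $+\infty$ and not merely on $\mathcal R$, and that the trace operator $H^j(D)\to L^2(\Gamma_R)$ composed with the analytic resolvent preserves analyticity — this needs $j\ge 1$ so that the trace is well defined, which the theorem supplies. A secondary subtlety is bookkeeping the eigenfrequencies: one should check that for each fixed $n$ all but finitely many $k_{n,j}$ exceed $M$ (true since $\mu_j\to\infty$), so that those are covered by the continued data, while the finitely many remaining ones satisfy $k_{n,j}^2\le M$ and are covered by the prescribed discrete data; the hypothesis $k_{n,j}\notin\{\alpha_i\}_{i=1}^{N_0}$ stated earlier guarantees $\beta_n(k_{n,j})\neq 0$ so the identity \eqref{reconstruction} is legitimate. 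Everything else is a routine reprise of the integration-by-parts computation already carried out in \eqref{reconstruction}.
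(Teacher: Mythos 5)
Your proposal is correct and follows essentially the same route as the paper: analytic continuation of the (vanishing) boundary data from the interval $I$ to all large wavenumbers via Theorem \ref{main_d}, followed by the integration-by-parts identity \eqref{reconstruction} at every eigenfrequency $k_{n,j}$ to kill each spectral coefficient. The only cosmetic difference is that you phrase the argument for the difference $g=f-f'$ rather than directly assuming zero data, and you make explicit the $k_{n,j}^2\le M$ versus $k_{n,j}>M$ bookkeeping that the paper leaves implicit.
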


\begin{proof}
	Let $u(x, k)=0$ for $x\in\Gamma_R$ and $ k \in I \cup \{ k_j:
	j\in\alpha\}$. It suffices to show that $f(x)=0$. Since $u(x, k)$
	is analytic in $\mathcal R$ for $x\in\Gamma_R$, it holds that
	$u(x, k) = 0$ for all $ k^2>M$. As the data  $\{u(x, k_j): x\in\Gamma_R,  k_{n, j}^2\leq M\}$ is also available,
	we have that
	\[
	\int_{\partial\widetilde{B}_R} u_n(\tilde{x}) \partial_{\nu_{\tilde x}}\overline{\phi_j}\,{\rm d}\tilde{x} = 0
	\]
	for all $j\in\mathbb N^+$ and $1\leq n\leq N_0$.
	It follows from \eqref{reconstruction} that 
	\[
	f_{n, j} = 0, \quad j\in\mathbb N^+, \, 1\leq n\leq N_0,
	\]
	which implies $f = 0$.
\end{proof}

\begin{remark}
	Motivated by \eqref{reconstruction} and the Fourier method in \cite{ZG}, we aim to approximate the unknown source function by a finite Fourier expansion of the form
	\[
	f_N(\tilde{x}, x_3) = \sum_{n=1}^N \sum_{j=1}^J f_{n, j} \phi_j(\tilde{x}) \sin\alpha_n x_3, \quad N, J\in\mathbb N^+.
	\]
	For numerics, Assumption (A) is not necessary.
\end{remark}

\subsection{Reconstruction algorithm}

This subsection deals with the numerical scheme for the null-potential inverse source problem, i.e., recover $f$ in the case $V=0$. Now the radiated field $u$ satisfies the Helmholtz equation
\begin{equation}\label{eq: Helmholtz}
	-\Delta u-k^2u=f,\quad \text{in}\ D
\end{equation}
with $k\in\mathbb{R}^+.$  Let $\{k_j\}_j$ be a finite number of frequencies, then the multi-frequency inverse source problem under consideration in this subsection is to reconstruct the source function $f(x)$ in \eqref{eq: Helmholtz} from the measurement $\{u(x; k):\,x\in \Gamma_R,\ k\in\{k_j\}_j\}.$

In terms of the series expansions \eqref{eq:seriesExpansion}, it is readily seen that each Fourier mode $u_n(\widetilde{x})$ satisfies the Sommerfeld radiated condition \eqref{src} and
\begin{align}\label{eq:unHelmholtz}
	\Delta_{\widetilde{x}}u_n+\beta_n^2(k) u_n=-f_n,
\end{align}
where $f_n$ is the Fourier mode of $f$ given by \eqref{fourier}, and $\beta_n(k)=\sqrt{k^2-\alpha_n^2}$ with $\Im\beta_n(k)\ge 0$.
In addition, the assumption that $\mathrm{supp} f\subset C_R$, without loss of generality, implies that there exists $V_0=(-a/2, a/2)^2 (a>0)$ such that $\mathrm{supp} f_n \subset V_0\subset \widetilde{B}_R.$

Though the modal wavenumber $\beta_n(k)$ is determined by $k$ and $\alpha_n$, from another point of view, once $\alpha_n$ and  $\beta$ are given in advance, the wavenumber $k$ can be obtained correspondingly through $k_{n,\beta}=\sqrt{\alpha_n^2+\beta^2}$. This further makes it possible to choose $\beta$ flexibly and it is unnecessary to relate the modal wavenumber $\beta$ with $n$ explicitly. In this view, \eqref{eq:unHelmholtz} can be rewritten as 
\begin{equation*}
	\Delta_{\widetilde{x}}u_n(\widetilde{x})+\beta^2u_n(\widetilde{x})=-f_n(\widetilde{x}).
\end{equation*}
Once the mode $f_n$ is recovered, the source function $f$ is correspondingly determined uniquely through \eqref{eq:seriesExpansion}. Thus, in the rest of this subsection, we aim to recover $f_n$ for $n=1,2,\cdots$  from $\{u_n(x;\beta):x\in\Gamma_R,\beta\in\mathcal{B}_N\}$ with $\mathcal{B}_N$ being the set of admissible modal wavenumbers, which is defined by 
\begin{definition}[Admissible modal wavenumbers]
	Let $N\in\mathbb{N}_+,$ and $\beta^*\in\mathbb{R}_+$ be a small modal wavenumber such that $0<\beta^*R<1.$ Then the admissible set of modal wavenumbers is given by 
	\begin{align*}
		\mathcal{B}_N=\left\{\frac{\pi}{a}|\bm\ell|:{\bm\ell}\in\mathbb{Z}^2:1\le|\bm\ell|_\infty\le N\right\}\cup\{\beta^*\}.
	\end{align*}
\end{definition}

The basic idea is to approximate $f_n$ by the following Fourier expansion 
\begin{align}\label{eq:fn}
	f_n^N(\widetilde{x})=\sum_{|\bm\ell|\le N}\widehat{f}_{{\bm\ell},n}\phi_{\bm\ell}(\widetilde{x})\ \text{with}\ \phi_{\bm\ell}(\widetilde{x})=\mathrm{e}^{\mathrm{i}\frac{2\pi}{a}{\bm\ell}\cdot \widetilde{x}},\quad  n=1,2,\cdots,
\end{align}
where ${\bm\ell}=(\ell_1,\ell_2)\in\mathbb{Z}^2$ and $\phi_{\bm\ell}$ are the Fourier basis functions. In \cref{eq:fn}, $\widehat{f}_{{\bm\ell},n}$ are the Fourier coefficients corresponding to the index $\bm\ell$ and the $n$-th Fourier mode $f_n$, 
\begin{align*}%\label{eq:coefficient}
	\widehat{f}_{{\bm\ell},n}=\frac{1}{a^2}\int_{V_0}f_n(\widetilde{x})\overline{\phi_{\bm\ell}}(\widetilde{x})\mathrm{d}\widetilde{x},\quad n=1,2,\cdots.
\end{align*}

Let $\nu_\rho$ be the unit outward normal to $\Gamma_\rho:=\{\widetilde{x}\in\mathbb{R}^2:|\widetilde{x}|=\rho>R\}.$ Define
\begin{align}\label{eq:2.6}
	w_n(\widetilde{x};\beta)&:=\sum_{m\in\mathbb{Z}}\frac{H_m^{(1)}(\beta\rho)}{H_m^{(1)}(\beta R)}\widehat{u}_{\beta, n, m}\mathrm{e}^{\mathrm{i}m\theta},\quad\widetilde{x}\in\Gamma_\rho,\quad n=1,2,\cdots,\\
	\partial_{\nu_\rho}w_n(\widetilde{x};\beta)&:=\sum_{m\in\mathbb{Z}}\beta\frac{{H_m^{(1)}}'(\beta\rho)}{H_m^{(1)}(\beta\label{eq:2.7} R)}\widehat{u}_{\beta, n, m}\mathrm{e}^{\mathrm{i}m\theta},\quad\widetilde{x}\in\Gamma_\rho,\quad n=1,2,\cdots,
\end{align}
where $\beta\in\mathcal{B}_N,$ and 
\begin{align*}
	\widehat{u}_{\beta, n, m}=\frac{1}{2\pi}\int_0^{2\pi}u_n(R,\theta;\beta)\mathrm{e}^{-\mathrm{i}m\theta}\,\mathrm{d}\theta.
\end{align*}

Following \cite{ZG}, the Fourier coefficients $\widehat{f}_{{\bm\ell},n}$ are explicitly given by
\begin{align}\label{eq:fl}
	\widehat{f}_{{\bm\ell},n} & =\frac{1}{a^2}\int_{\Gamma_\rho}\left(
	\partial_{\nu_\rho}w_n(\widetilde{x};\beta)+\mathrm{i}\frac{2\pi}{a}({\bm\ell}\cdot\nu_\rho)w_n(\widetilde{x};\beta)
	\right)\overline{\phi}_{\bm\ell}(\widetilde{x})\,\mathrm{d}s_{\widetilde{x}},\,1\le|\bm\ell|_\infty\le N,\\
	\widehat{f}_{{\bm 0},n}&=\nonumber\frac{\vartheta\pi}{a^2\sin\vartheta\pi}\int_{\Gamma_\rho}\left(\partial_{\nu_\rho}w_n(\widetilde{  x};\beta^*)+\mathrm{i}\frac{2\pi}{a}({\bm\ell}^*\cdot\nu_\rho)w_n(\widetilde{  x};\beta^*)\right)\overline{\phi}_{{\bm\ell}^*}(\widetilde{x})\,\mathrm{d}s_{\widetilde{x}}\\
	&\quad -\frac{\vartheta\pi}{a^2\sin\vartheta\pi}\sum_{1\le|\bm\ell|_\infty\le N}\widehat{f}_{{\bm\ell},n}
	\int_{V_0}\phi_{\bm\ell}(\widetilde{ x})\overline{\phi}_{{\bm\ell}^*}(\widetilde{x})\,\mathrm{d}\widetilde{x}, \label{eq:f0}
\end{align}
where $\vartheta$ is  a constant such that $0<\vartheta<\frac{a}{2\pi},\,{\bm\ell}^*=(\vartheta, 0)$ and $\beta^*=\frac{\pi\vartheta}{a}.$ Hence,    
\begin{align}\label{eq:fN}
	f_N(\widetilde{x}, x_3)=\frac{2}{L}\sum_{n=1}^\infty f_n^N(\widetilde{  x})\sin(\alpha_n x_3),
\end{align}
can be taken as the reconstruction to $f(x)$. We finally remark that following \cite{ZG}, the stability of the Fourier method can be analogously deduced.

\subsection{Numerical verification}

We shall conduct several numerical experiments to verify the performance of the Fourier method proposed for the inverse source problem arising in the waveguide.
The synthetic radiated fields were generated by solving the forward problem via direct integration. Utilizing the Green's function
\begin{equation}\label{eq:G}
G(x, y)  =  \frac{\rm i}{2L}\sum_{n = 1}^{N_0} H_0^{(1)} (\beta_n(k) |\tilde{x} - \tilde{y}|) \sin(\alpha_n y_3)  \sin(\alpha_n x_3),\quad x, y\in D,\ \tilde{x}\neq \tilde{y},
\end{equation}
the radiating solution to the Helmholtz equation \eqref{eq: Helmholtz} is given by
$$
	u(x; k)=-\int_{V_1}G(x, y)f(y)\mathrm{d}y,
$$
where $V_1=V_0\times[0,L]$. The Gauss quadrature is adopted to calculate the volume integrals over the $50^3$ Gauss-Legendre points. Then the synthetic data is corrupted by artificial noise via $u^\delta:=u+\delta r_1|u|\mathrm{e}^{\mathrm{i}\pi r_2}$ where $r_1$ and $r_2$ are two uniformly distributed random numbers ranging from $-1$ to 1, $\delta$ is the noise level.

Now, we specify the details of the implementational aspects of the Fourier method. Let $V_0=[-0.3,0.3]^2, \, L=2.$ We aim to reconstruct the true source $S(x),x\in V_1$ by the Fourier expansion $S_N(x),\,x\in V_1.$ Throughout our numerical experiment, the Green's function \eqref{eq:G} is numerically truncated by $N_0=80.$
Further, given $N\in\mathbb{N}^+,$ the modal wavenumbers are set to be 
\begin{equation*}
	\mathcal{B}_N=\left\{\frac{10\pi}{3}|\bm\ell|:{\bm\ell}\in\mathbb{Z}^2,1\le|\bm\ell|_\infty\le N\right\}\bigcup\left\{\frac{10^{-2}\pi}{3}\right\}.
\end{equation*} 
The radiated data 
\[
\begin{aligned}
	\Bigg\{
	u\left(R,\theta_j, z_\xi;\sqrt{\alpha_n^2+\beta_{\bm\ell}^2}\right):\ &\theta_j=\frac{j\pi}{150},j=1,\cdots, 300; z_\xi=\frac{\xi L}{40},\,\xi=1,\cdots, 40; \\
	&\alpha_n=\frac{2n-1}{2L}\pi,\,n=0,1,\cdots, 40;\beta_{\bm\ell}\in\mathcal{B}_N
	\Bigg\}
\end{aligned}
\]
are measured on $\Gamma_R$ with $R=0.5$.  For a visualization of the measurement surface, we refer to \Cref{fig: GammaR} where the red points denote the receivers.

\begin{figure}
	\centering
	\includegraphics[width=0.3\textwidth]{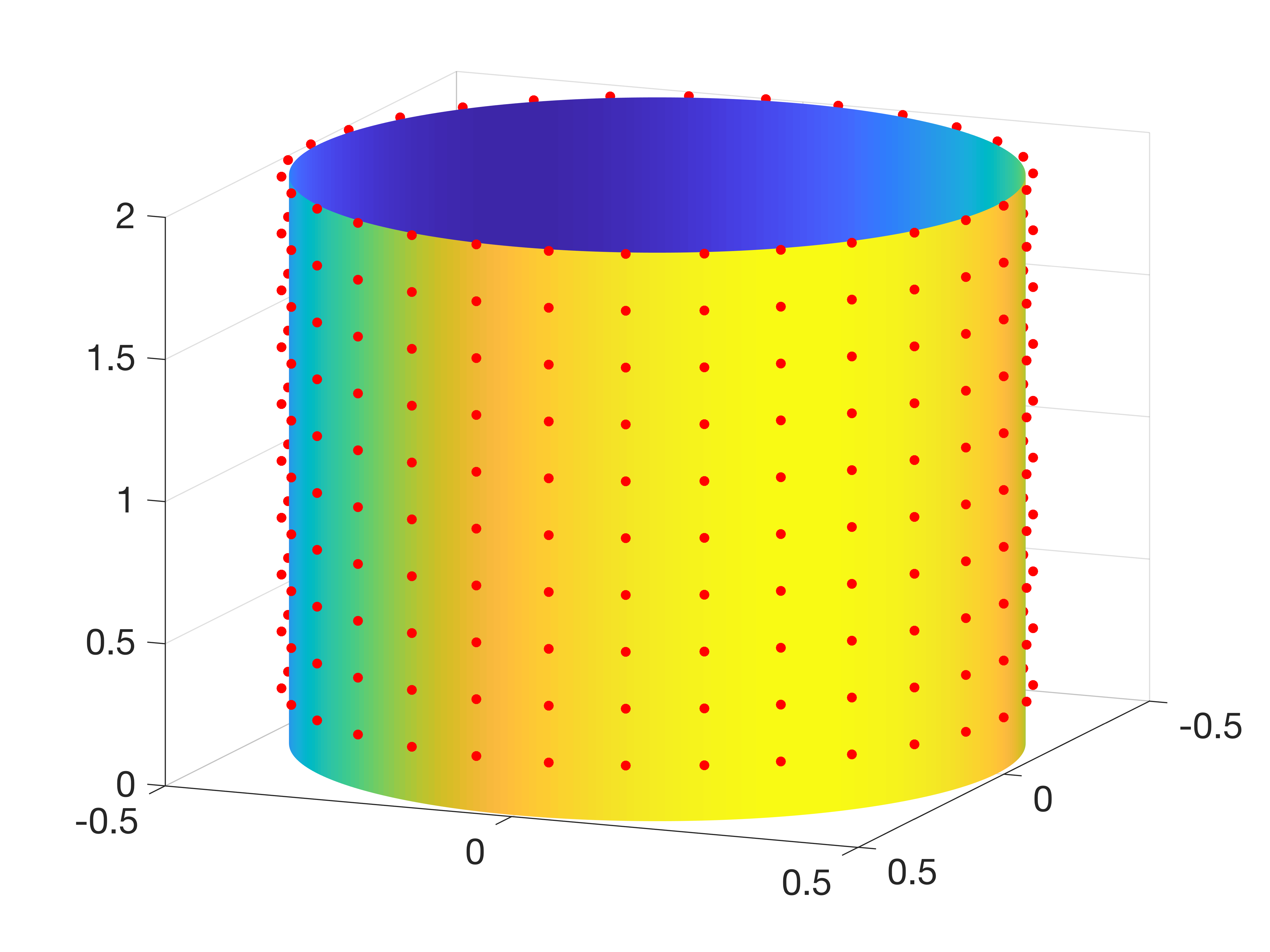}
	\caption{Illustration of the measurement surface.}\label{fig: GammaR}
	\label{fig: receiver}
\end{figure}

Once the radiated field is measured, the corresponding modes of the radiated data can be written as 
\[
\left\{
u_n\left(R,\theta_j;{\beta_{\bm\ell}}\right):\theta_j=\frac{j\pi}{150},j=1,\cdots, 300,\beta_{\bm\ell}\in\mathcal{B}_N
\right\},\quad n=1,2,\cdots.
\]
Next, in terms of  \eqref{eq:2.6}--\eqref{eq:2.7} with $\rho=0.6$ and truncation $|m|\le 70$, we compute the following artificial Cauchy data of the Fourier modes
\[
\left\{ u_n(\rho,\Theta_j;\beta_{\bm\ell}), \partial_{\nu_\rho} u_n(\rho,\Theta_j;\beta_{\bm\ell}):\Theta_j=\frac{j\pi}{400}, j=1,\cdots, 800,\beta_{\bm\ell}\in\mathcal{B}_N
\right\},\ n=1,2,\cdots. 
\]

To calculate the Fourier coefficients $\widehat{f}_{{\bm\ell}, n},$ and $\widehat{f}_{{\bm 0}, n}$ in \eqref{eq:fl} and \eqref{eq:f0}, respectively, we use the trapezoidal rule to evaluate the surface integrals over $\Gamma_\rho$ and the volume integral over $V_0$ is evaluated over a $200\times200$ grid of uniformly spaced points $x_m\in V_0,m=1,\cdots, 200^2.$ Finally, we compute the point-wise values $f_N(\widetilde{x}, x_3)$ by \eqref{eq:fN} with $f_n^N(\widetilde{x})$ determined by \eqref{eq:fn}.

\begin{figure}[!h]
	\centering
	\includegraphics[width=0.4\textwidth]{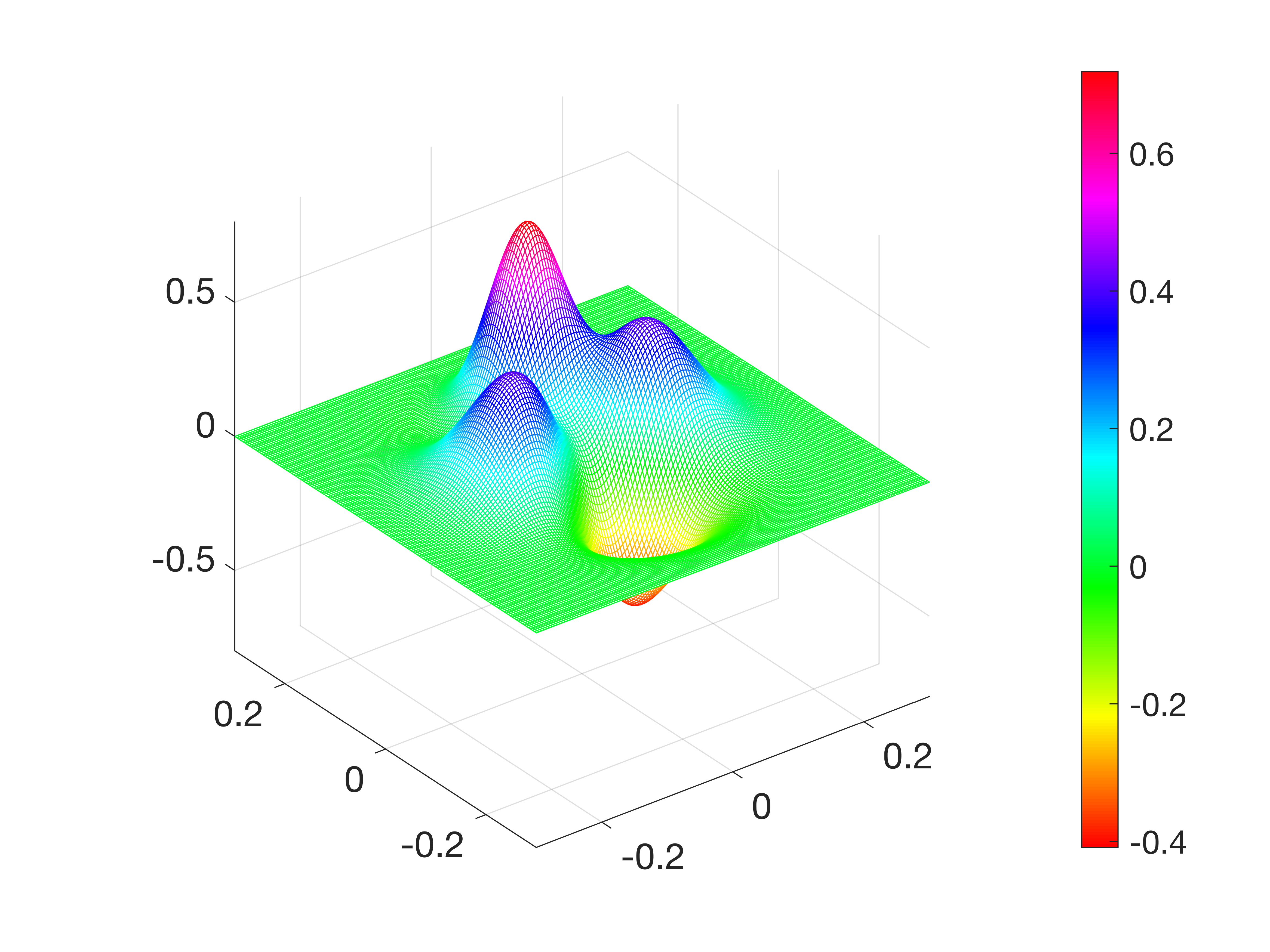}\quad
	\includegraphics[width=0.4\textwidth]{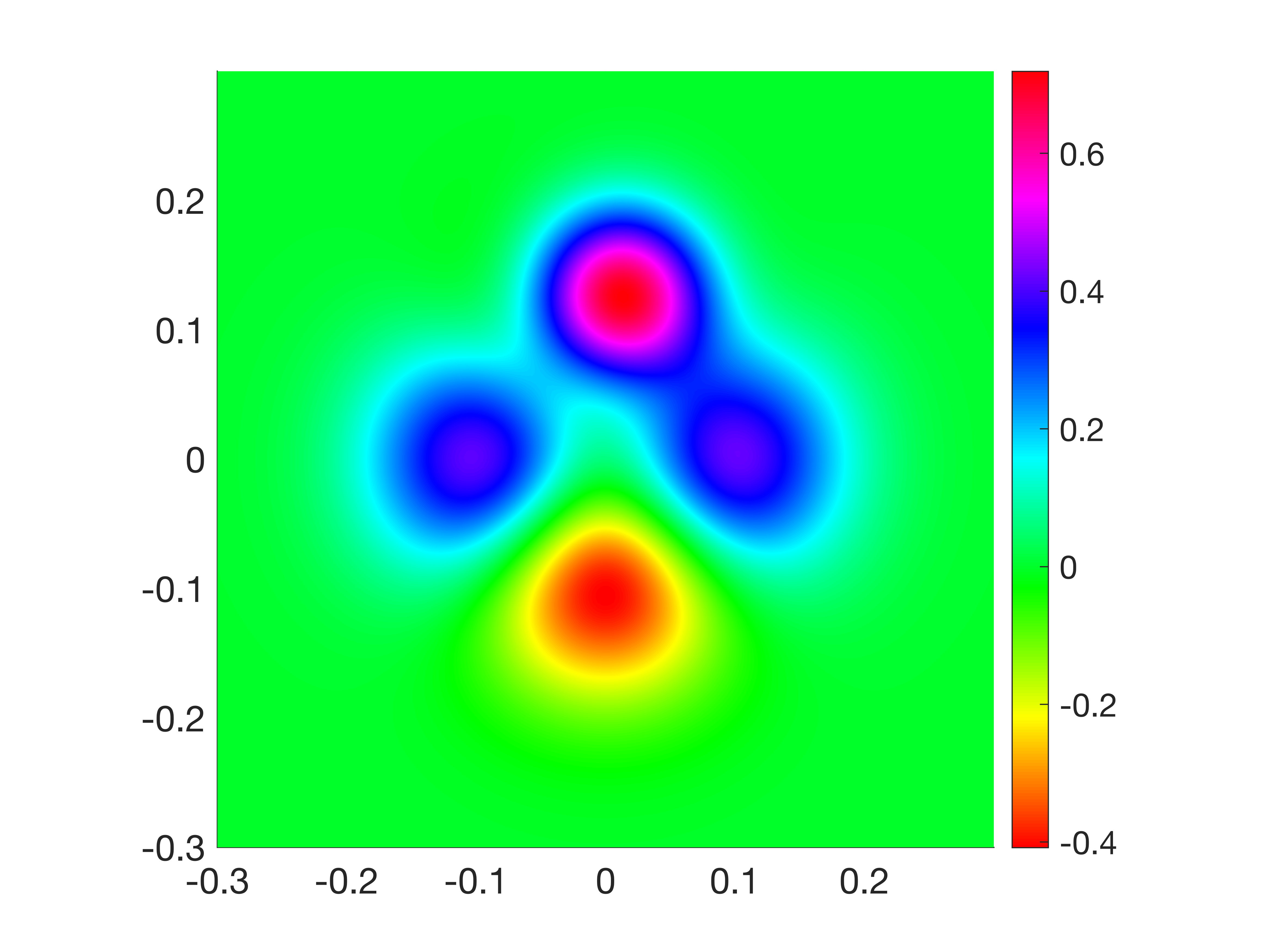}
	\caption{The exact source mode $\widetilde{f}$. Left: surface plot; Right: contour plot.}\label{fig: S1e}
\end{figure}

\begin{example}
	In the first example, we test the performance of the method by considering the reconstruction of the source function which has finite Fourier modes. Specifically, the source function is given by 
	\[
	f_1(x_1,x_2,x_3)=\frac{2}{L}\sum_{n=1}^{40}\widetilde{f}(x_1,x_2)\sin\alpha_n x_3,
	\] 
	with
	\[
	\widetilde{f}(x_1,x_2)=1.1\mathrm{e}^{-200\left((x_1-0.01)^2+(x_2-0.12)^2\right)}
	-100\left(x_2^2-x_1^2\right)\mathrm{e}^{-90\left(x_1^2+x_2^2\right)}.
	\]
\end{example}
In this example, we set $\delta=5\%$ and $N=4$. The $n$-th Fourier mode $f_n$ of the exact source function is independent of $n$ and $f_n(x_1, x_2)\equiv \widetilde{f}(x_1, x_2),\,n=1,2,\cdots$. We refer to \Cref{fig: S1e} for a display of $\widetilde{f}$. 
Theoretically, once a Fourier mode is recovered, the source function can be approximated by multiplying the factor $\sum_{n=1}^{40}\sin\alpha_n x_3$. Nevertheless, we have yet to determine the precise expression of the exact source. Thus, for each $n=1,2,\cdots, 40,$ we reconstruct the Fourier modes $\widetilde{f}_n(x_1, x_2)$ and exhibit the reconstruction of several different Fourier modes in \Cref{fig:S1}.  The results in \Cref{fig: S1e} and \Cref{fig:S1} demonstrate that all the Fourier modes are well recovered, and the reconstruction error is around $4.53\%$. 

\begin{figure}
	\centering
	\includegraphics[width=0.3\textwidth]{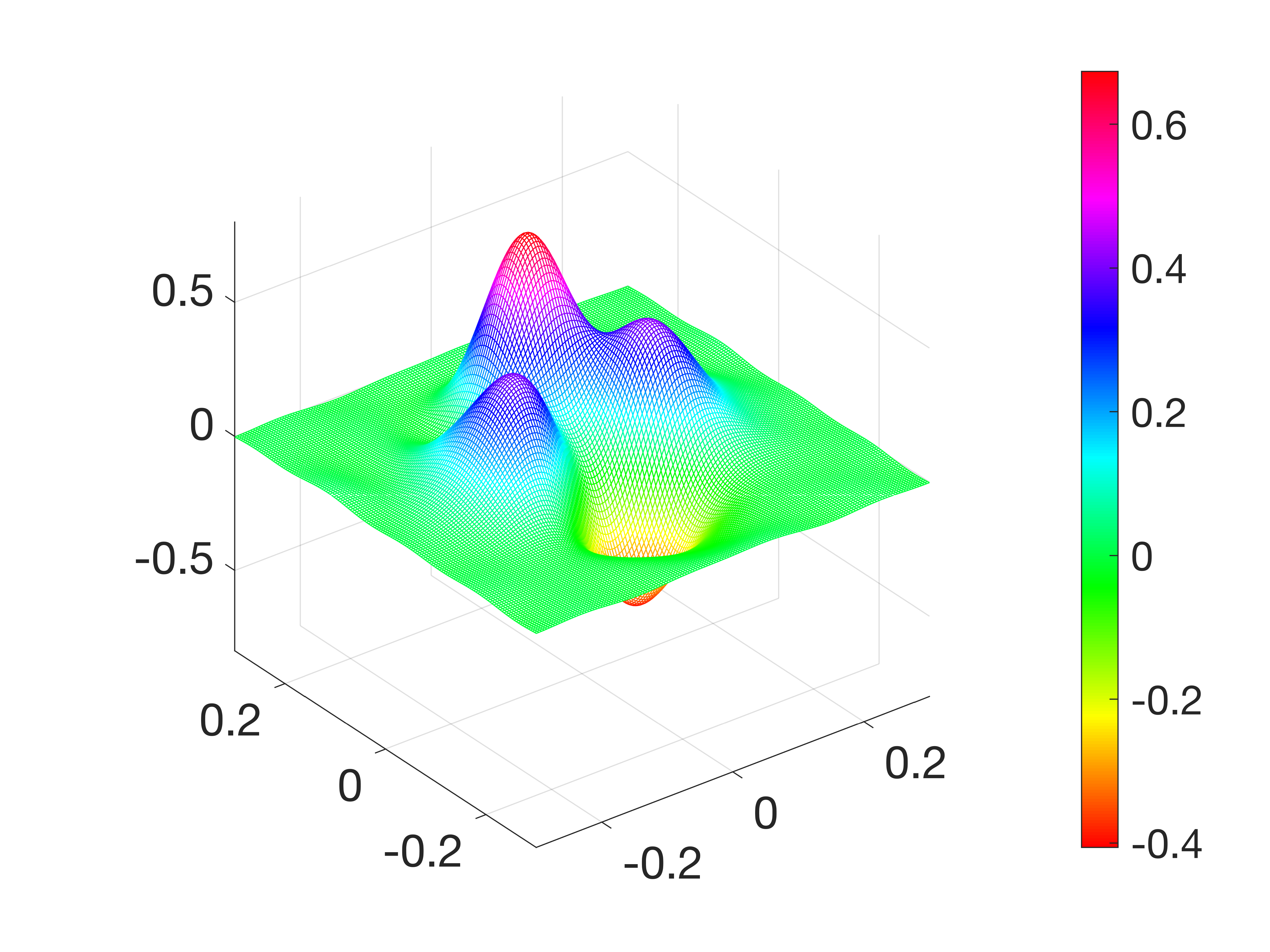}\quad
	\includegraphics[width=0.3\textwidth]{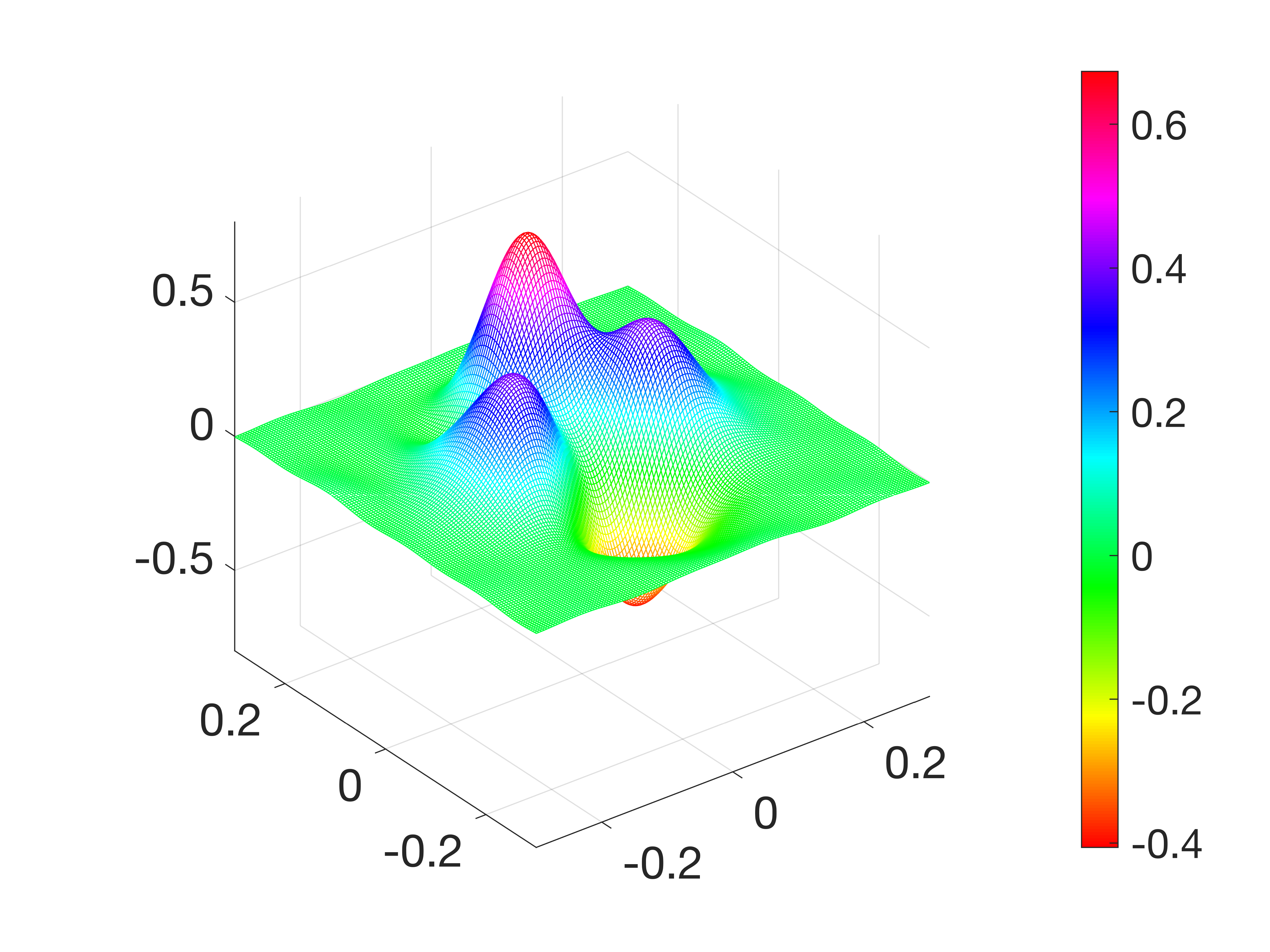}\quad
	\includegraphics[width=0.3\textwidth]{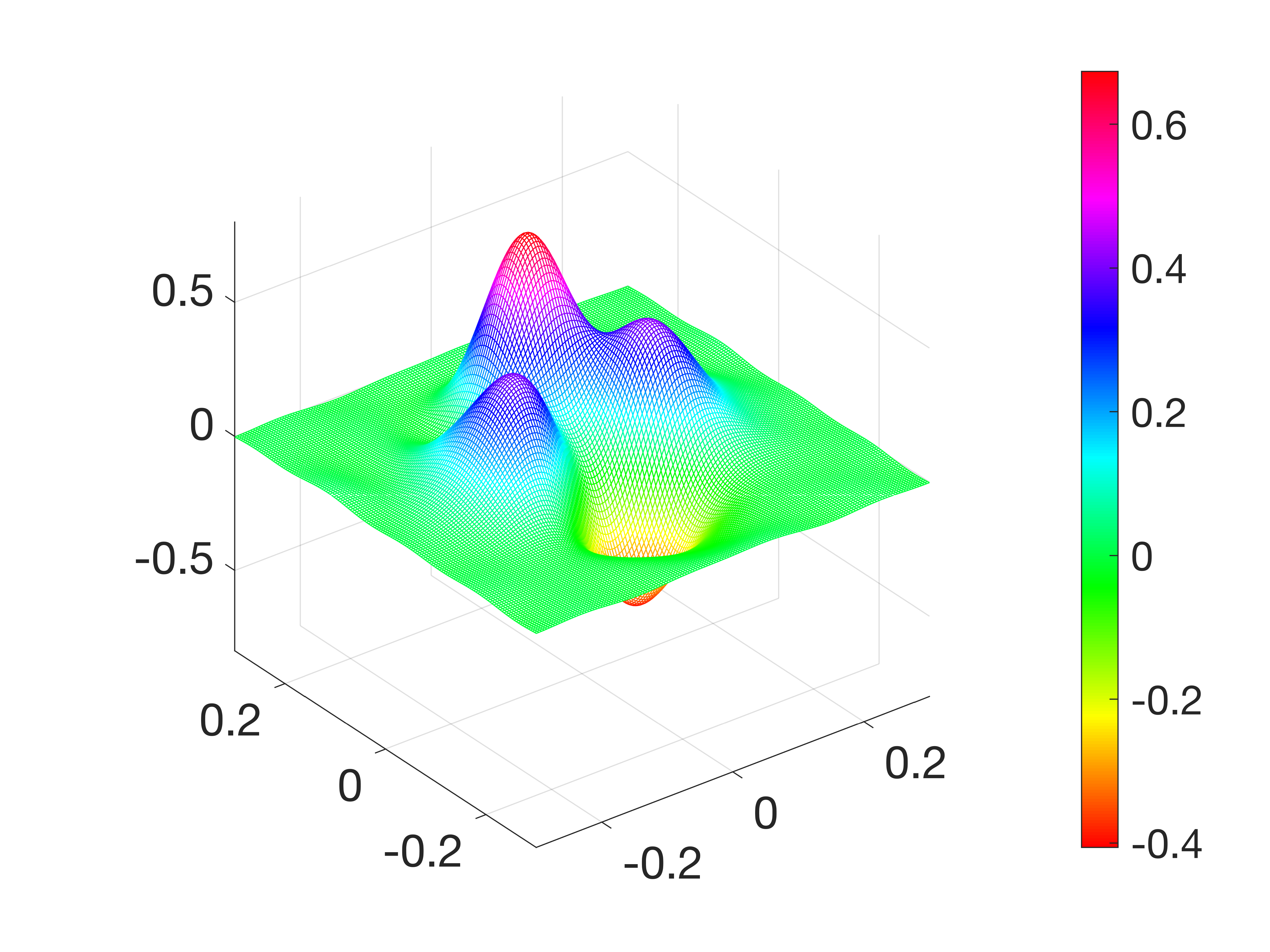}\\
	\includegraphics[width=0.3\textwidth]{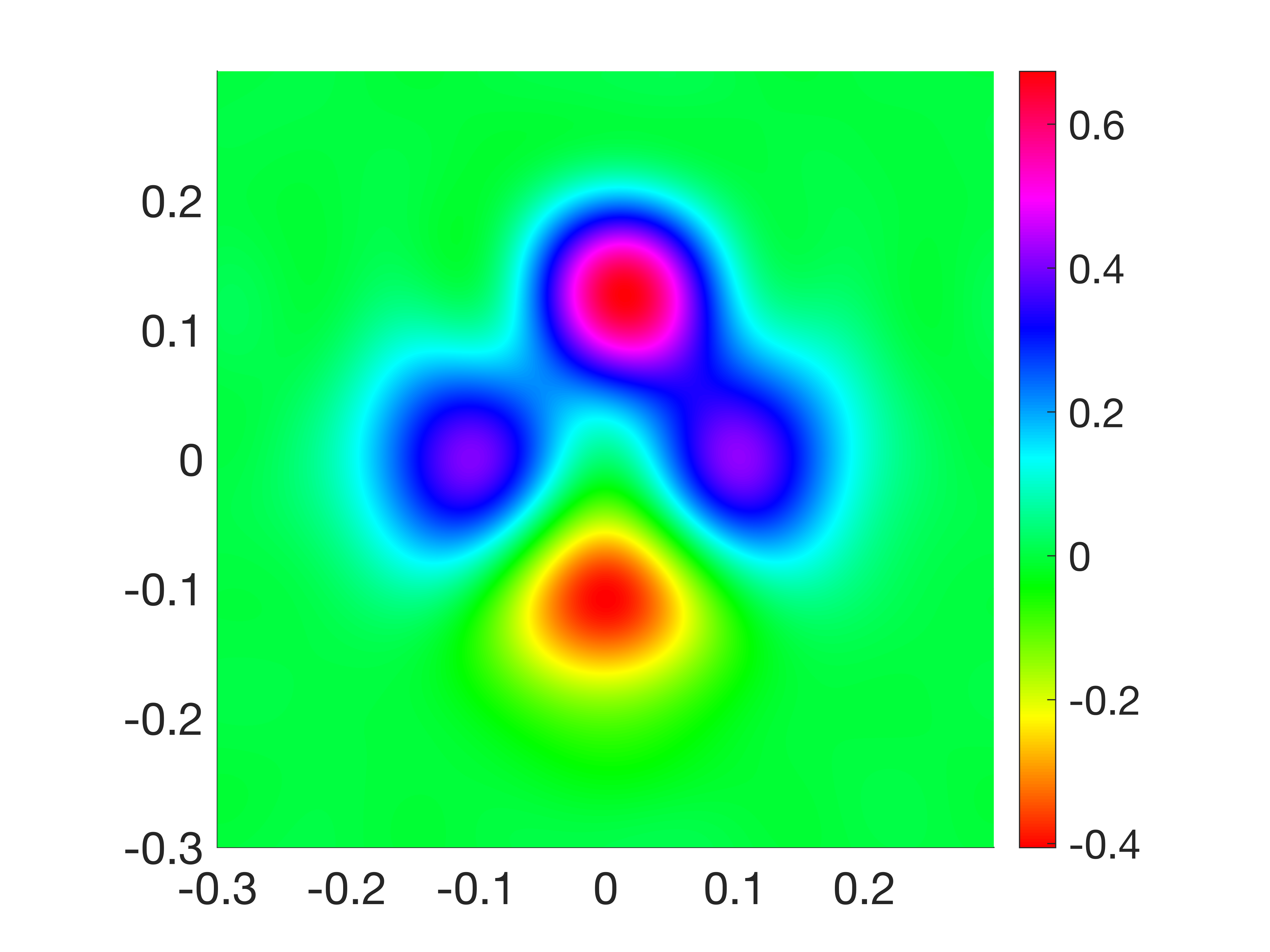}\quad
	\includegraphics[width=0.3\textwidth]{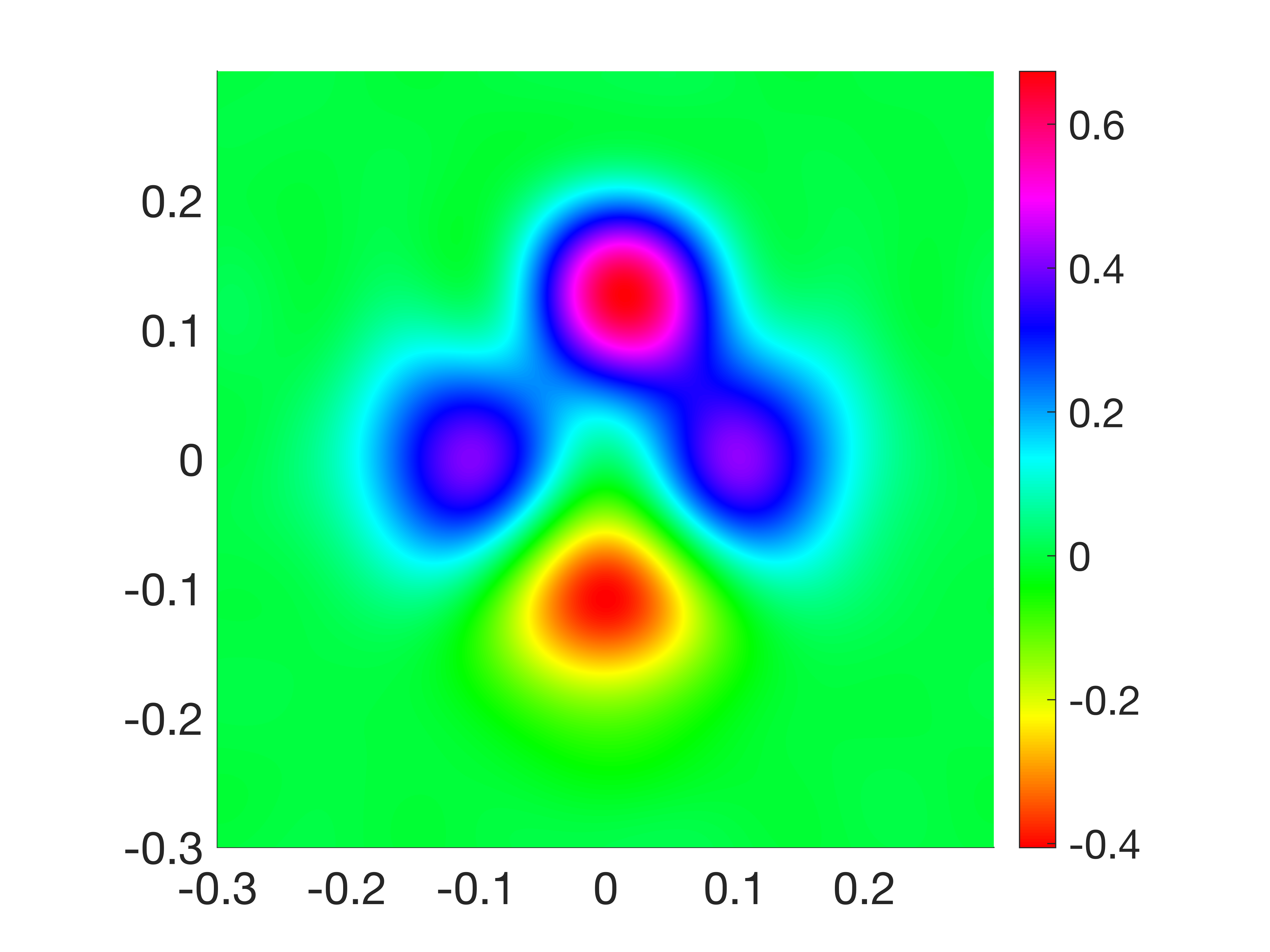}\quad
	\includegraphics[width=0.3\textwidth]{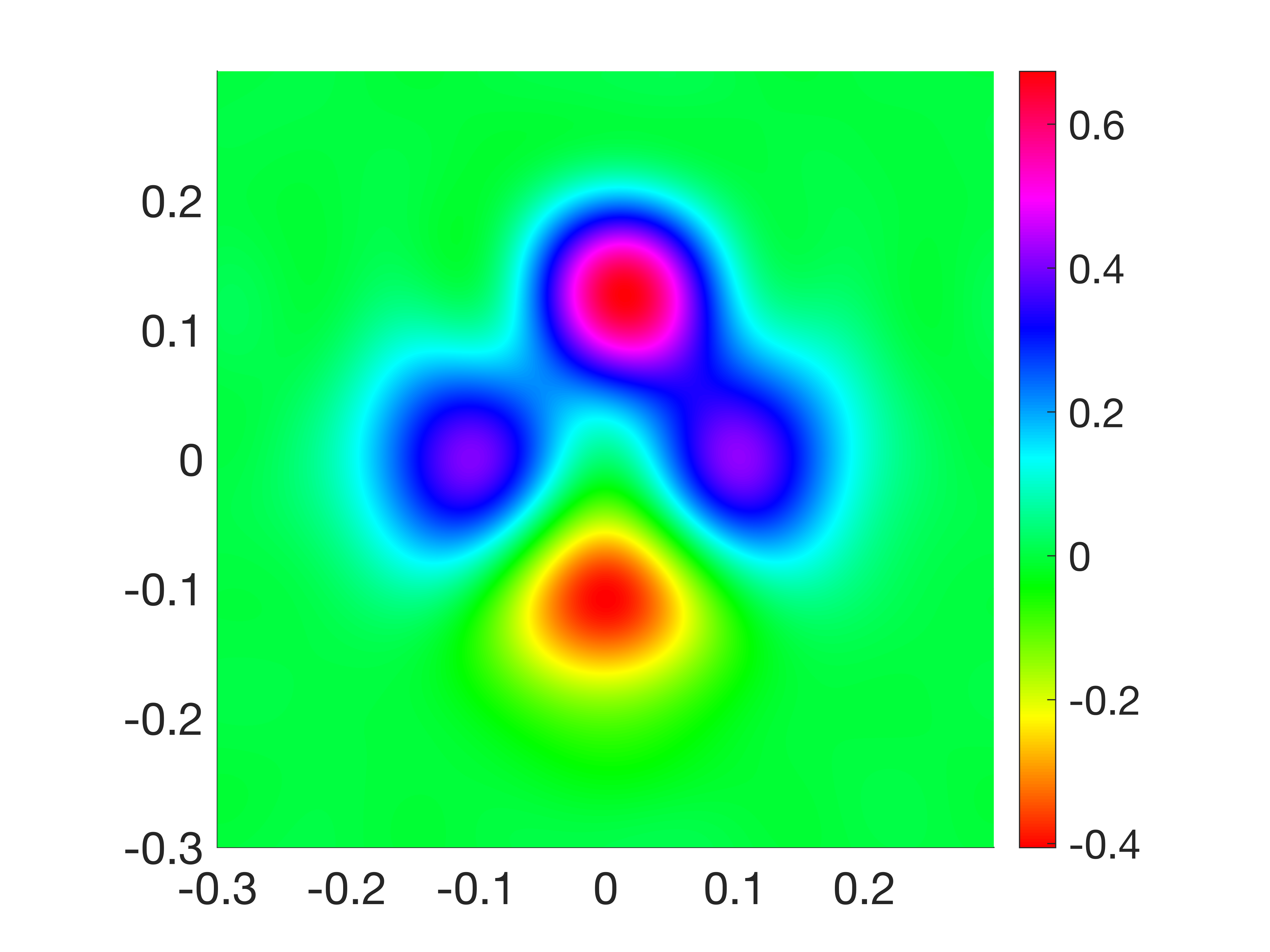}
	\caption{The recovered Fourier mode $\widetilde{f}_n$ of the source $f_1$. Row 1: surface plots; Row 2: contour plots. Column 1: $n=1$; Column 2: $n=15$; Column 3: $n=30$.}\label{fig:S1}
\end{figure}

Next, we reconstruct the source at different slices: $x_3=0.15, 0.85$, and $1.35$. The reconstructions are depicted in \Cref{fig: S1_2}, which illustrate that the profile of the source function can be well captured at these locations. 

This example shows that when the exact source has limited Fourier modes, these Fourier modes can be well-reconstructed. Furthermore, the source can also be recovered satisfactorily under the Fourier expansion. However, most of the source functions may not have limited Fourier expansions. So we shall reconstruct a more general source function in the next example.

\begin{figure}[!h]
	\centering
	\includegraphics[width=0.3\textwidth]{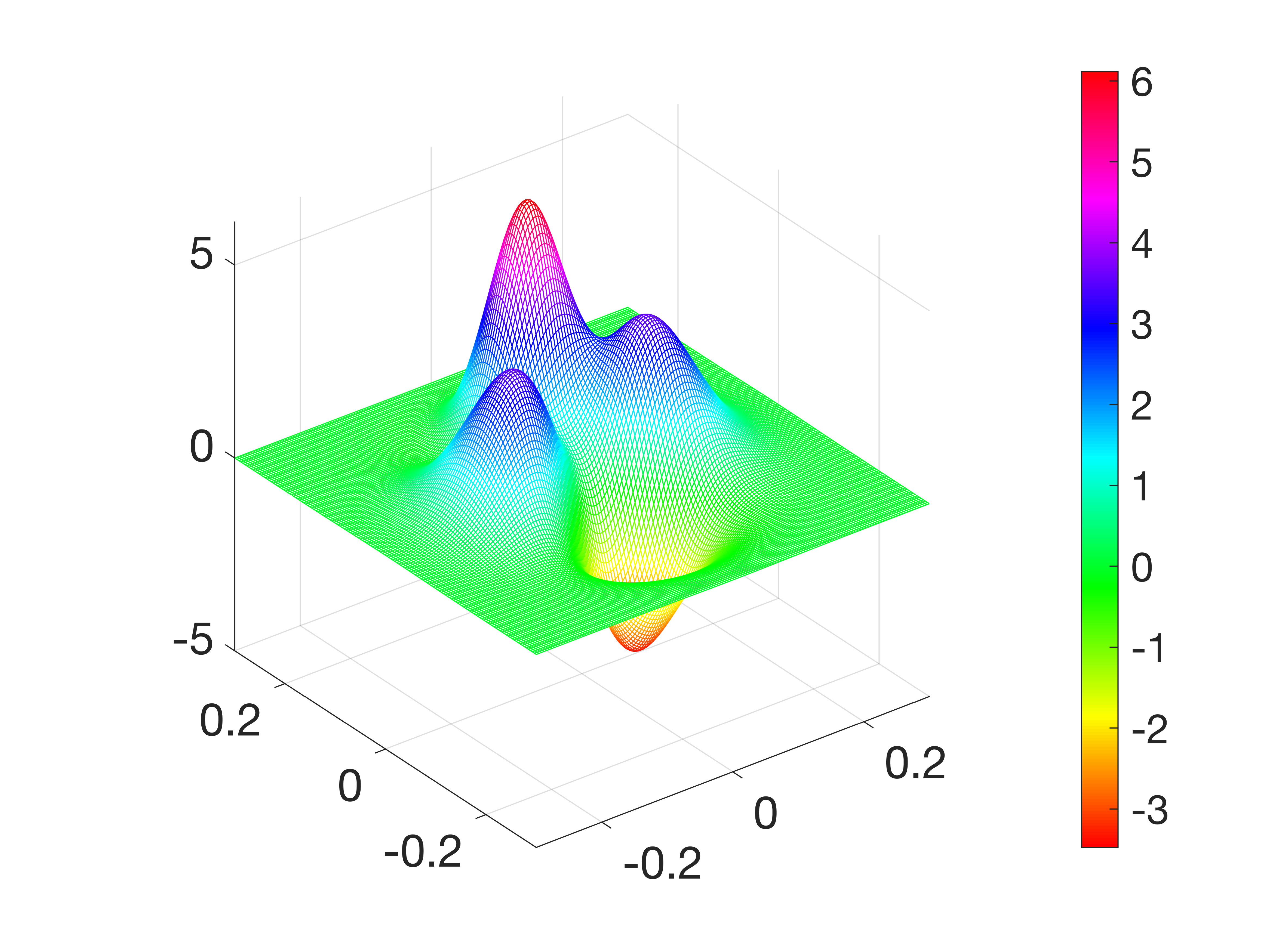}\quad
	\includegraphics[width=0.3\textwidth]{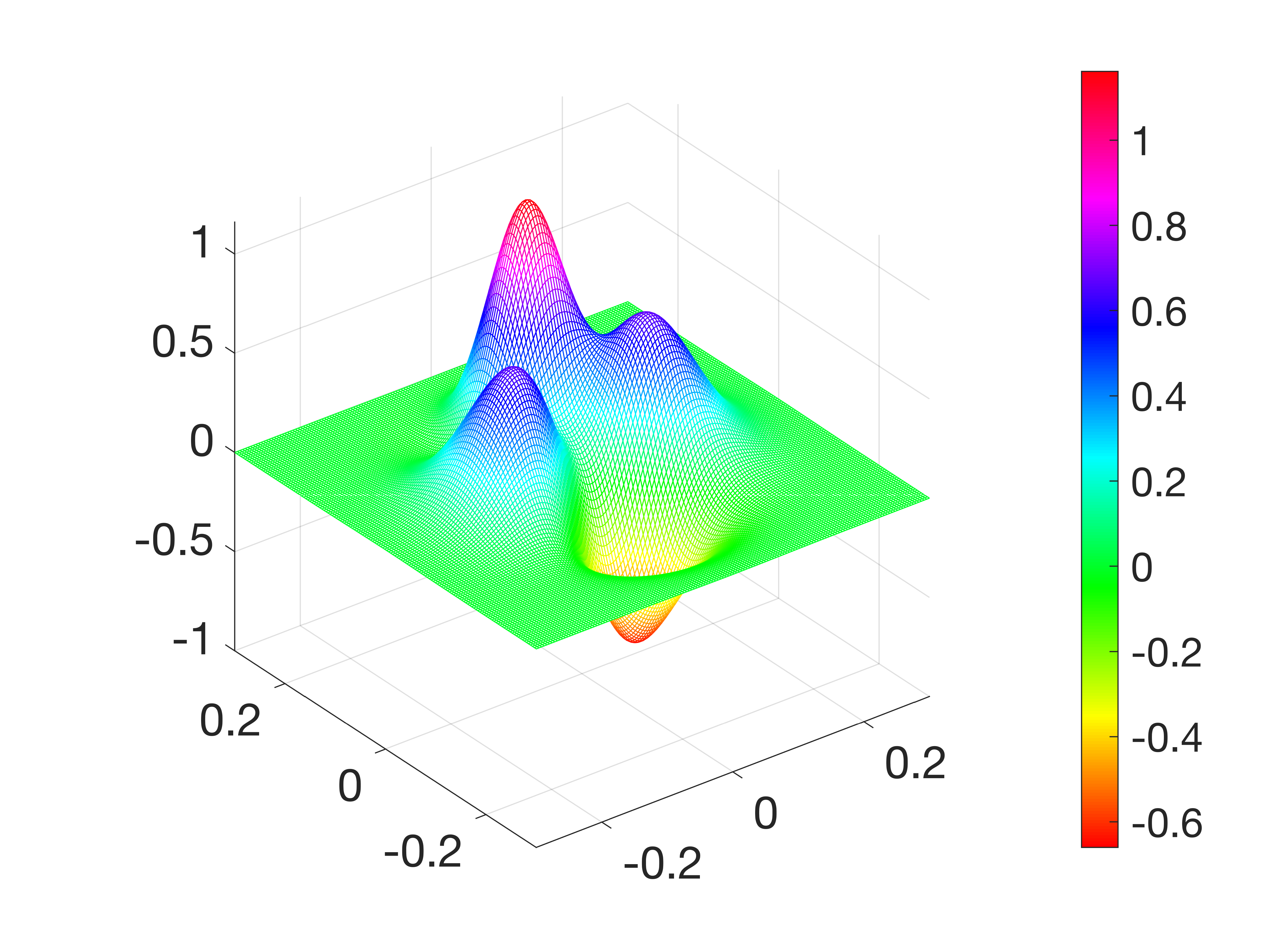}\quad
	\includegraphics[width=0.3\textwidth]{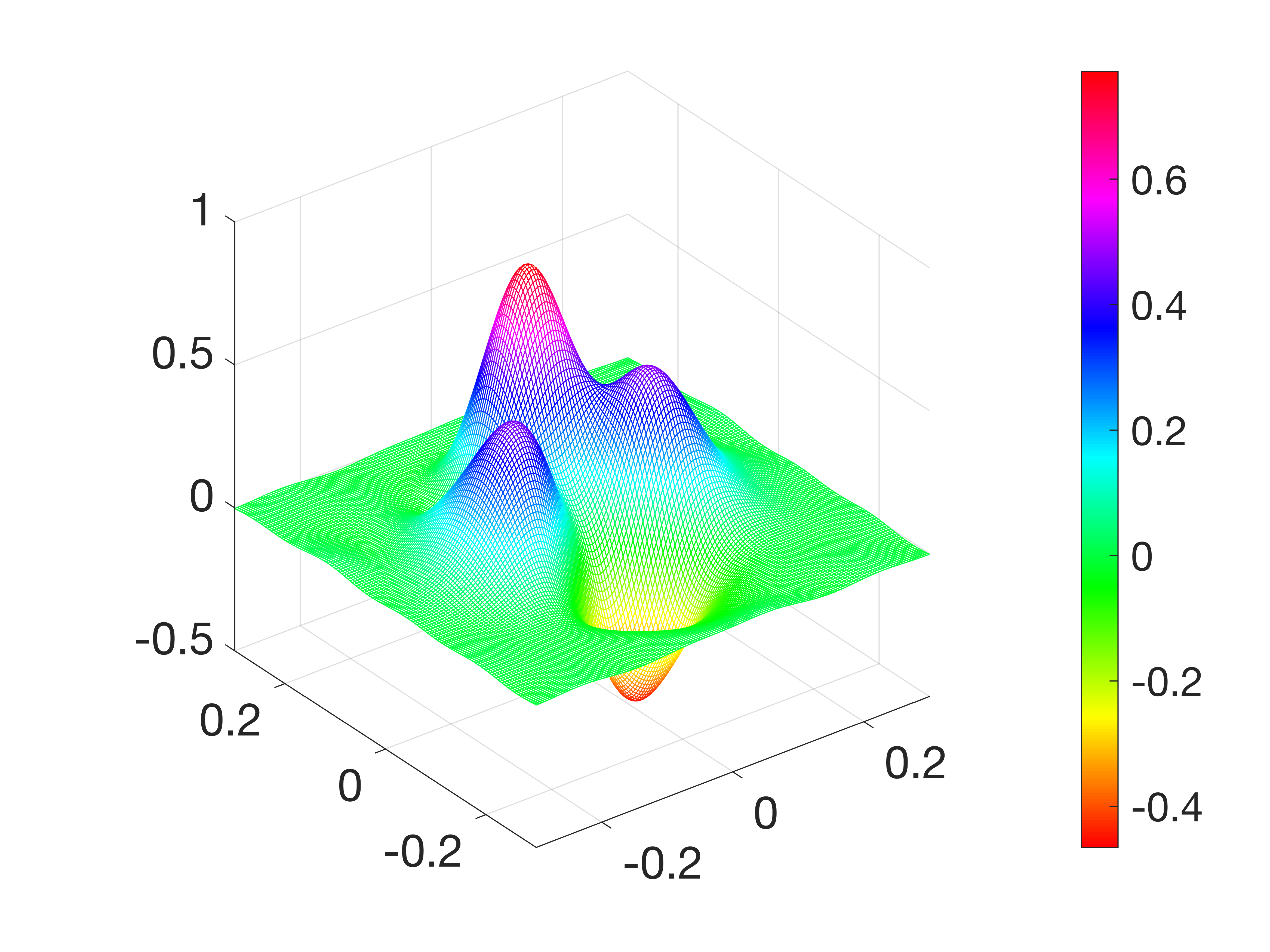}\\
	\includegraphics[width=0.3\textwidth]{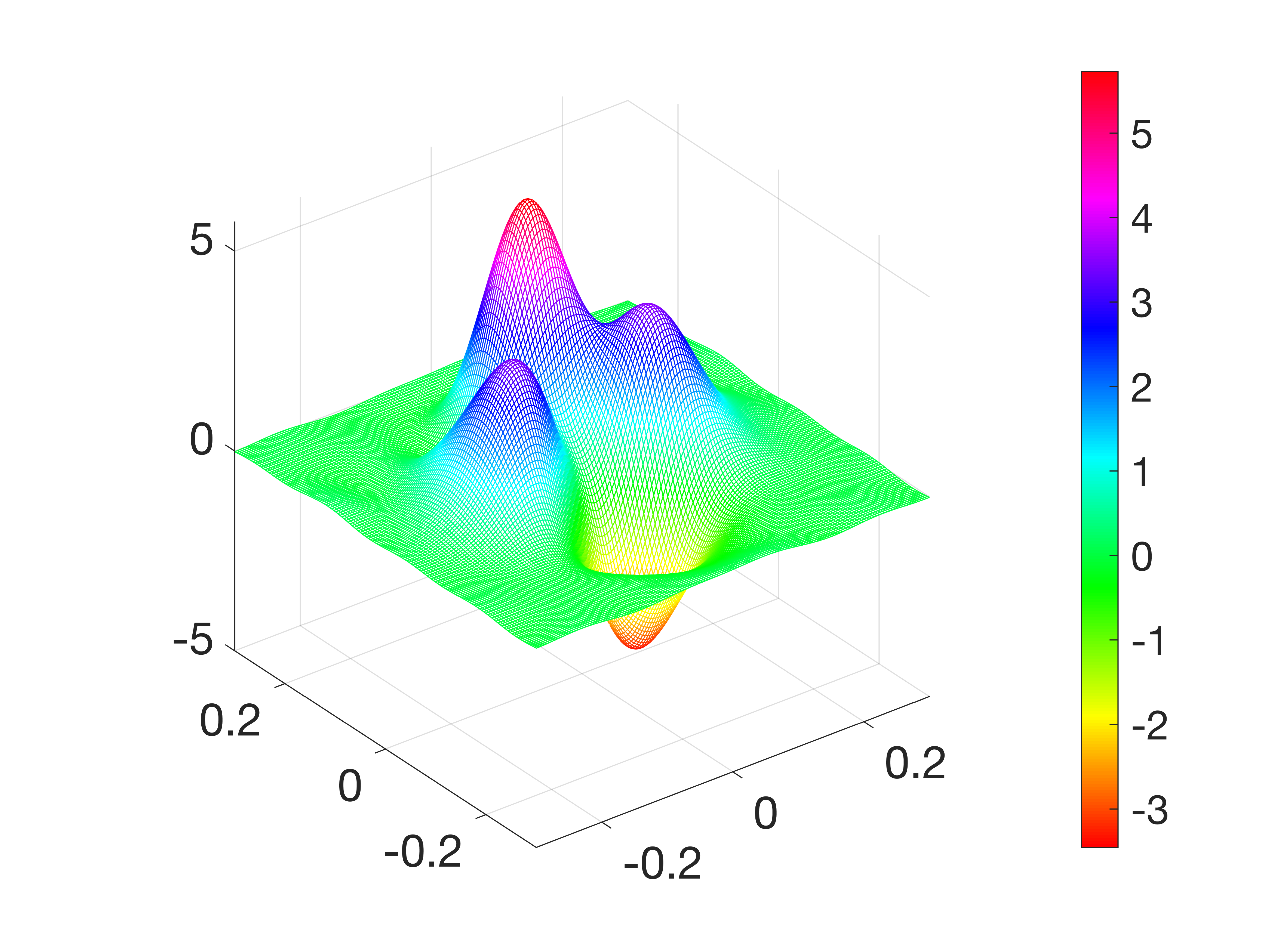}\quad
	\includegraphics[width=0.3\textwidth]{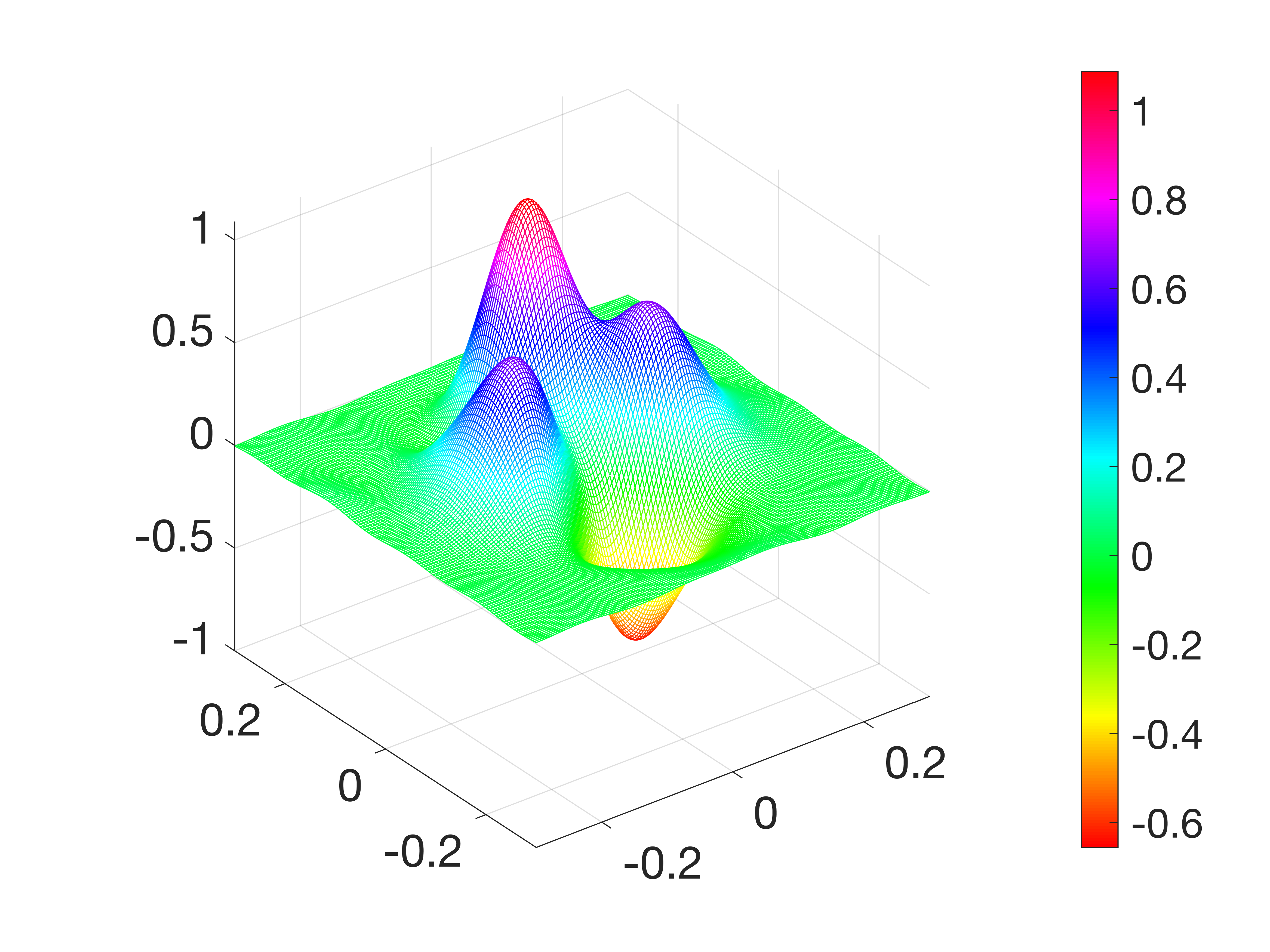}\quad
	\includegraphics[width=0.3\textwidth]{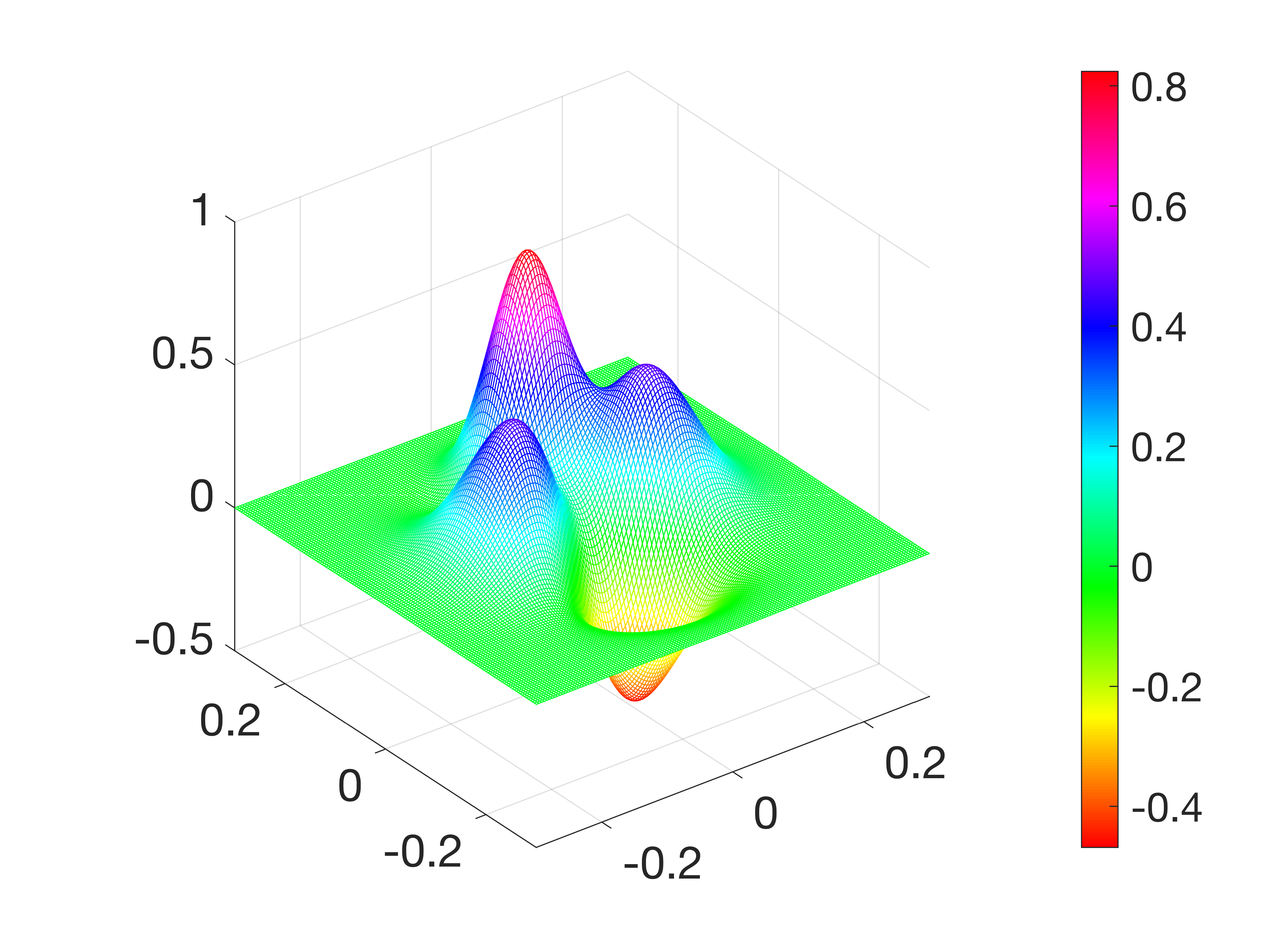}
	\caption{The exact source $f_1$ and recovered source $f_N$ at different $x_3$. Row 1: exact sources; Row 2: reconstructions; Column 1: $x_3=0.15$; Column 2: $x_3=0.85$; Column 3: $x_3=1.35$.}\label{fig: S1_2}
\end{figure}

\begin{example}
	The second example is devoted to reconstructing a mountain-shaped function described by 
	\[
	f_2(x_1,x_2,x_3)=1.1\mathrm{e}^{-200\left((x_1-0.01)^2+(x_2-0.12)^2+x_3^2\right)}
	-100\left(x_1^2-x_2^2\right)\mathrm{e}^{-90\left(x_1^2+x_2^2+x_3^2\right)}.
	\]
\end{example}

Different from $f_1$ which has limited Fourier expansion with the same Fourier mode, the source function $f_2$ is more general.  
Here the truncation $N=N(\delta)$ is set to be $5\left[\log\delta\right],$ with $[X]$ denoting the largest integer that is smaller than $X+1$. In this way, the influence of the magnitude of noise is investigated in this example. For a quantitative evaluation of the inversion scheme, we compute the relative $L^2$ errors 
$$
Err=\frac{\|f_N-f\|_{L^2(V_1)}}{\|f\|_{L^2(V_1)}}\ \text{and}\ err(x_3)=\frac{\|f_N(\cdot, x_3)-f(\cdot, x_3)\|_{L^2(V_0)}}{\|f(\cdot, x_3)\|_{L^2(V_0)}},\quad x_3\in[0,L].
$$

First, $\delta=5\%$ is used and we compare $f$ and $f_N$ at  $x_3=0.25$ in \Cref{fig:S2_1}. One can see from the results that the source function is well-reconstructed. For an in-depth slice view, we plot several cross-sections of them in \Cref{fig: S2_2}. We can see from \Cref{fig: S2_2} that the reconstruction matches almost perfectly with the exact source at these typical cross sections. Especially, even if the source value is close to $0,$ the reconstruction is still satisfactory (see \Cref{fig: S2_2}(b) for example).

\begin{figure}
	\centering
	\includegraphics[width=0.4\textwidth]{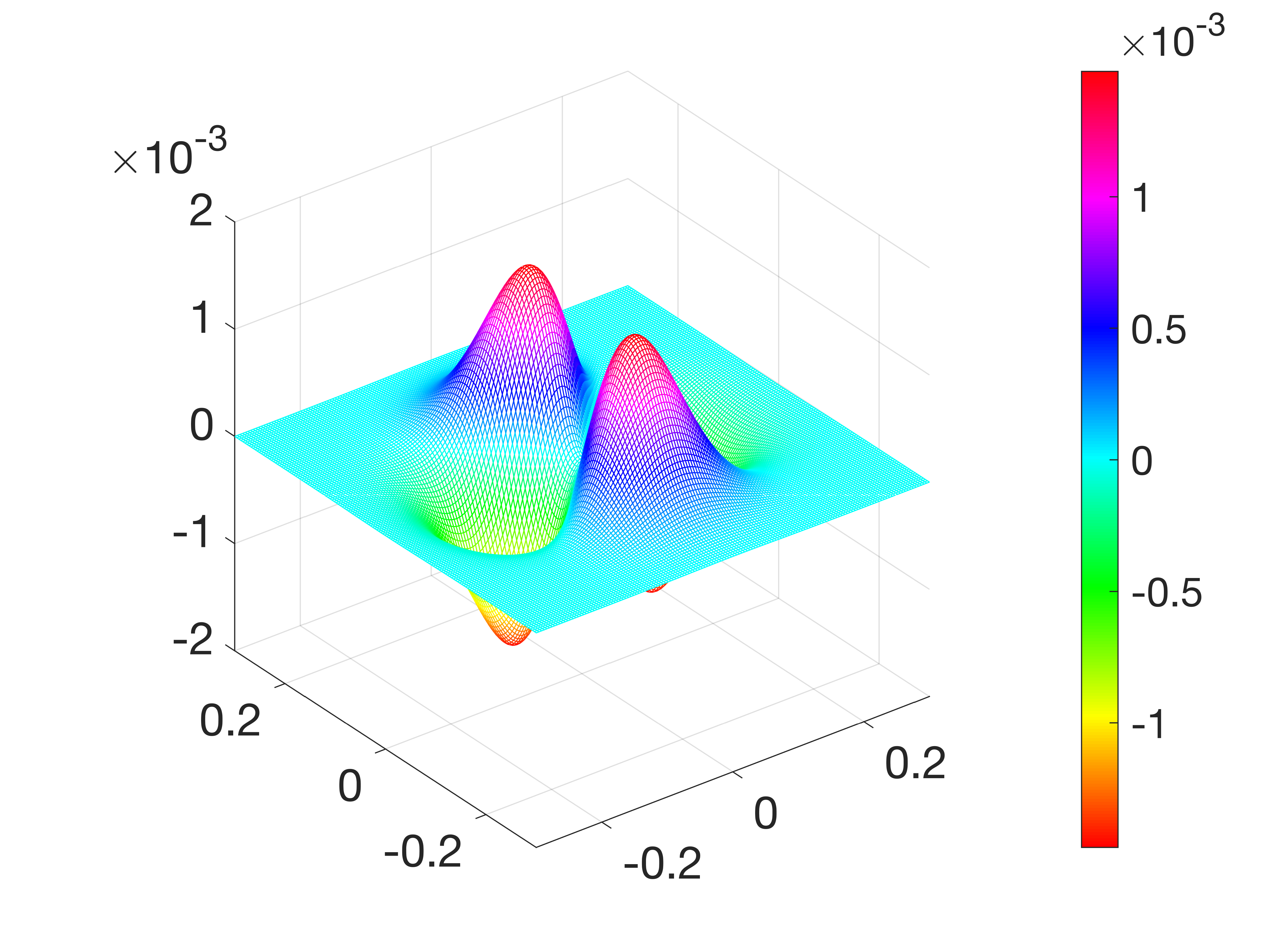}\quad
	\includegraphics[width=0.4\textwidth]{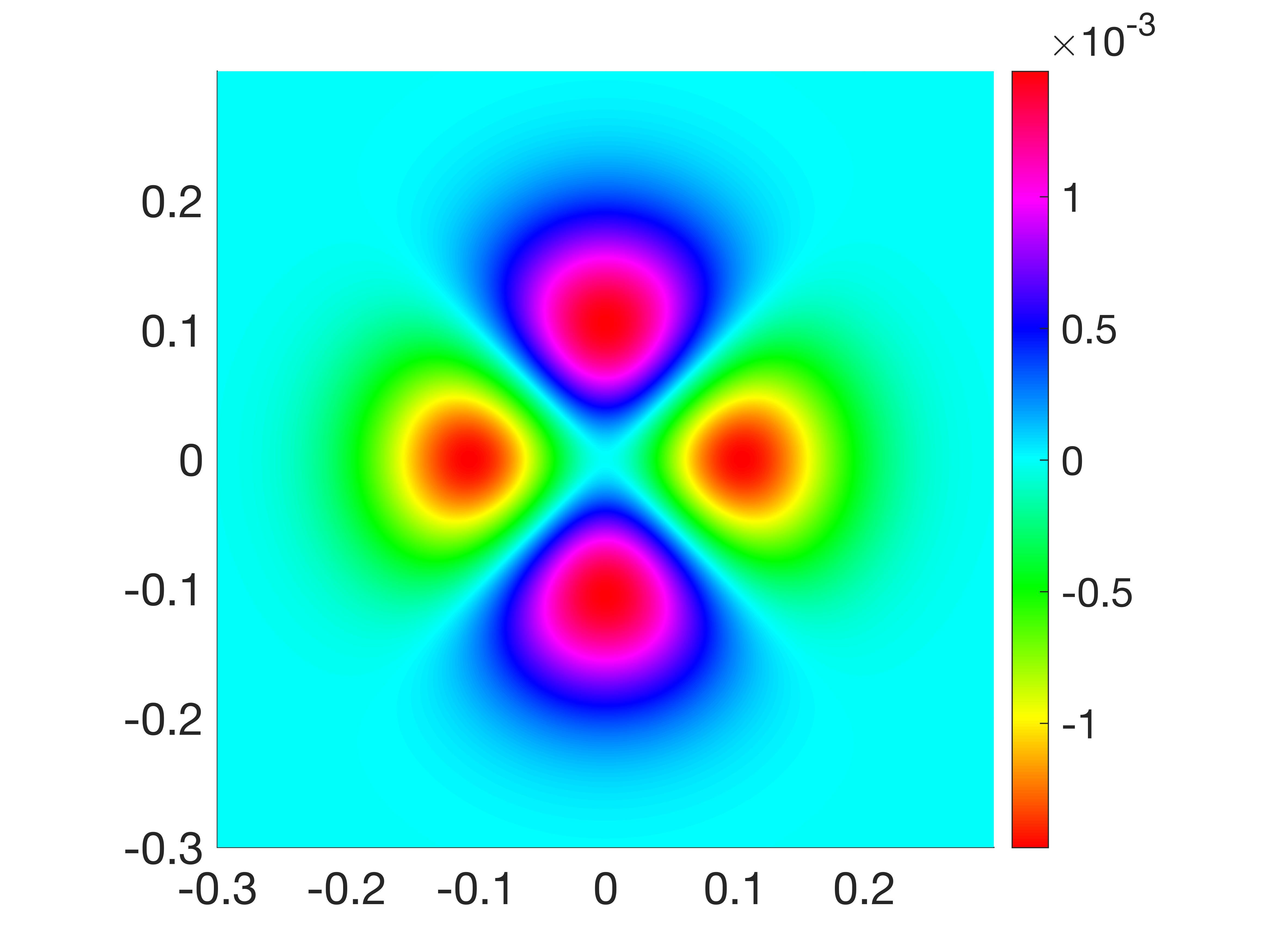}\\
	\includegraphics[width=0.4\textwidth]{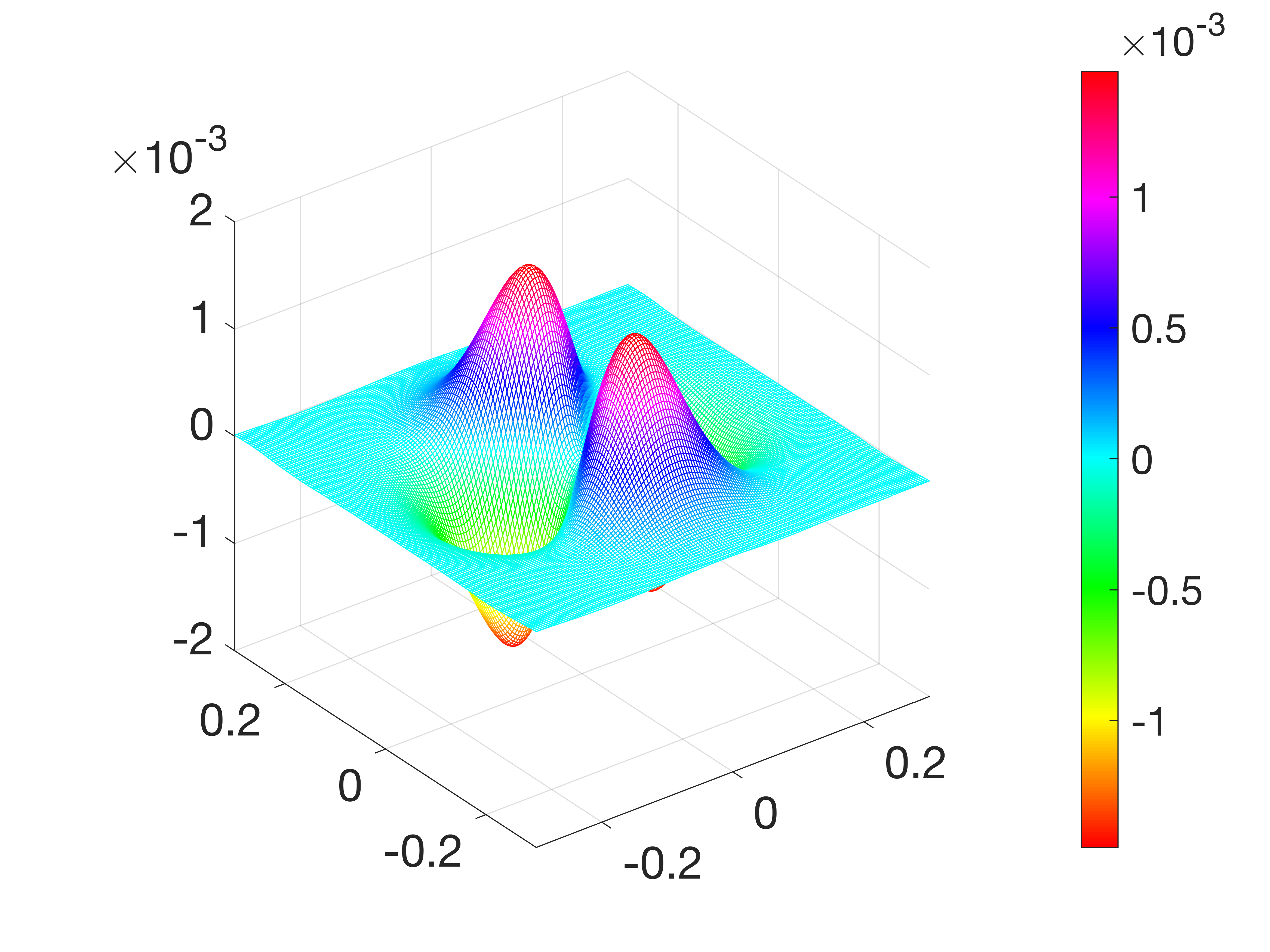}\quad
	\includegraphics[width=0.4\textwidth]{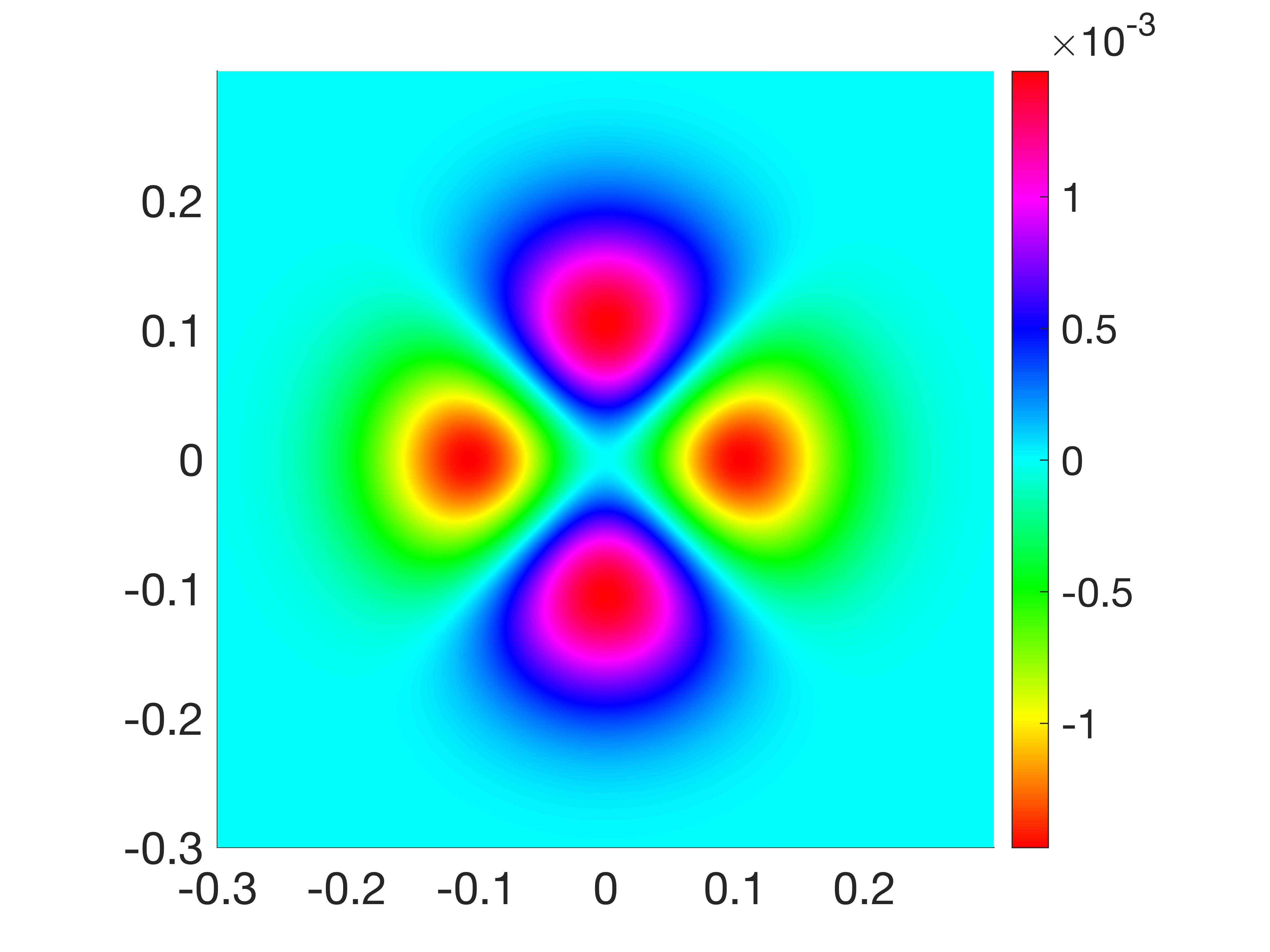}\\
	\caption{The exact source $f_2$ and recovered source $f_N$ at $x_3=0.25$. Row 1: exact source. Row 2: reconstruction. Column 1: surface plots; Column 2: contour plots.}\label{fig:S2_1}
\end{figure}

\begin{figure}
	\centering
	\subfloat[$x_2=-0.002,x_3=0.4$]{\includegraphics[width=0.4\textwidth]{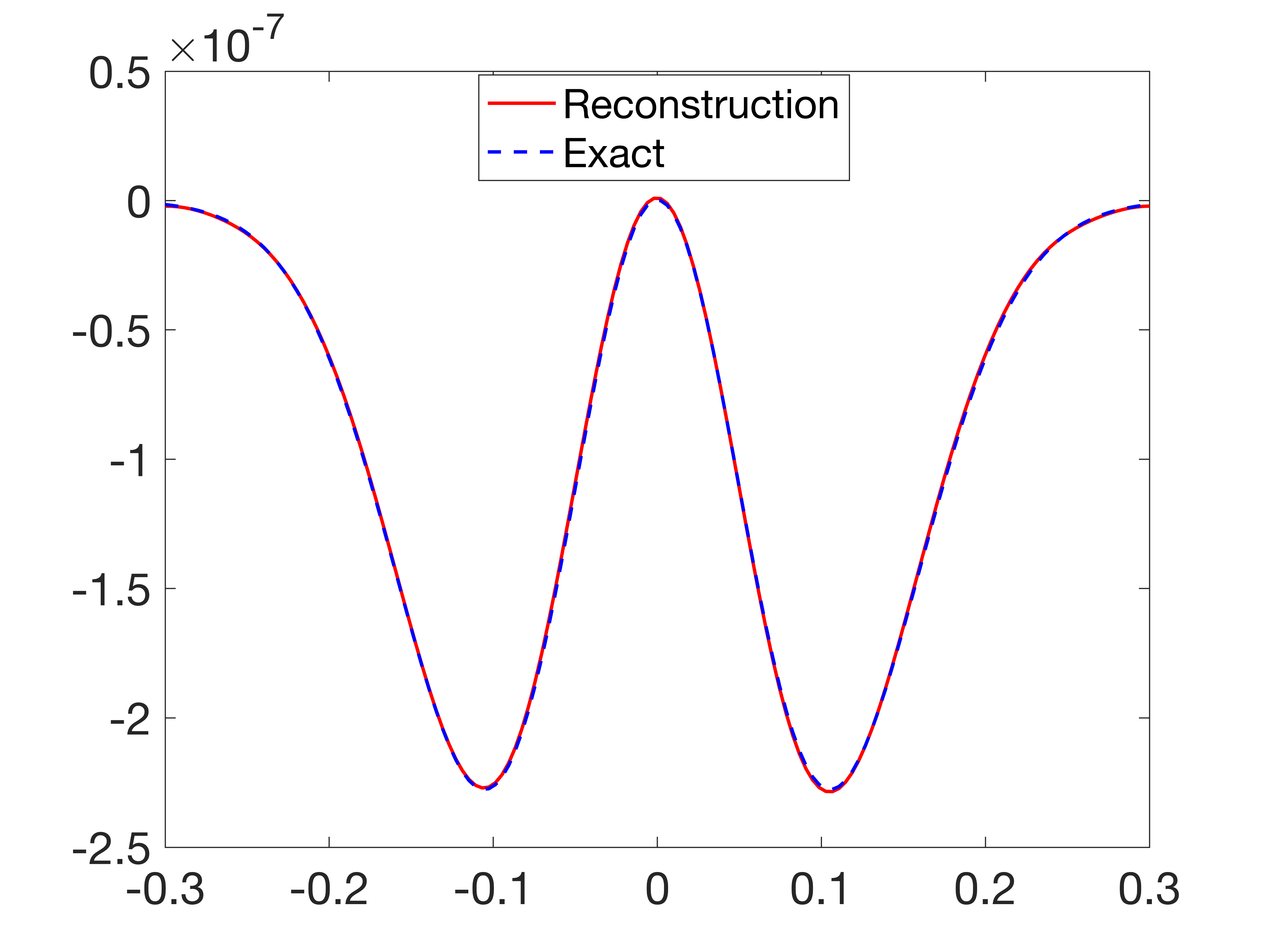}}\quad
	\subfloat[$x_2=-0.002,x_3=0.6$]{\includegraphics[width=0.4\textwidth]{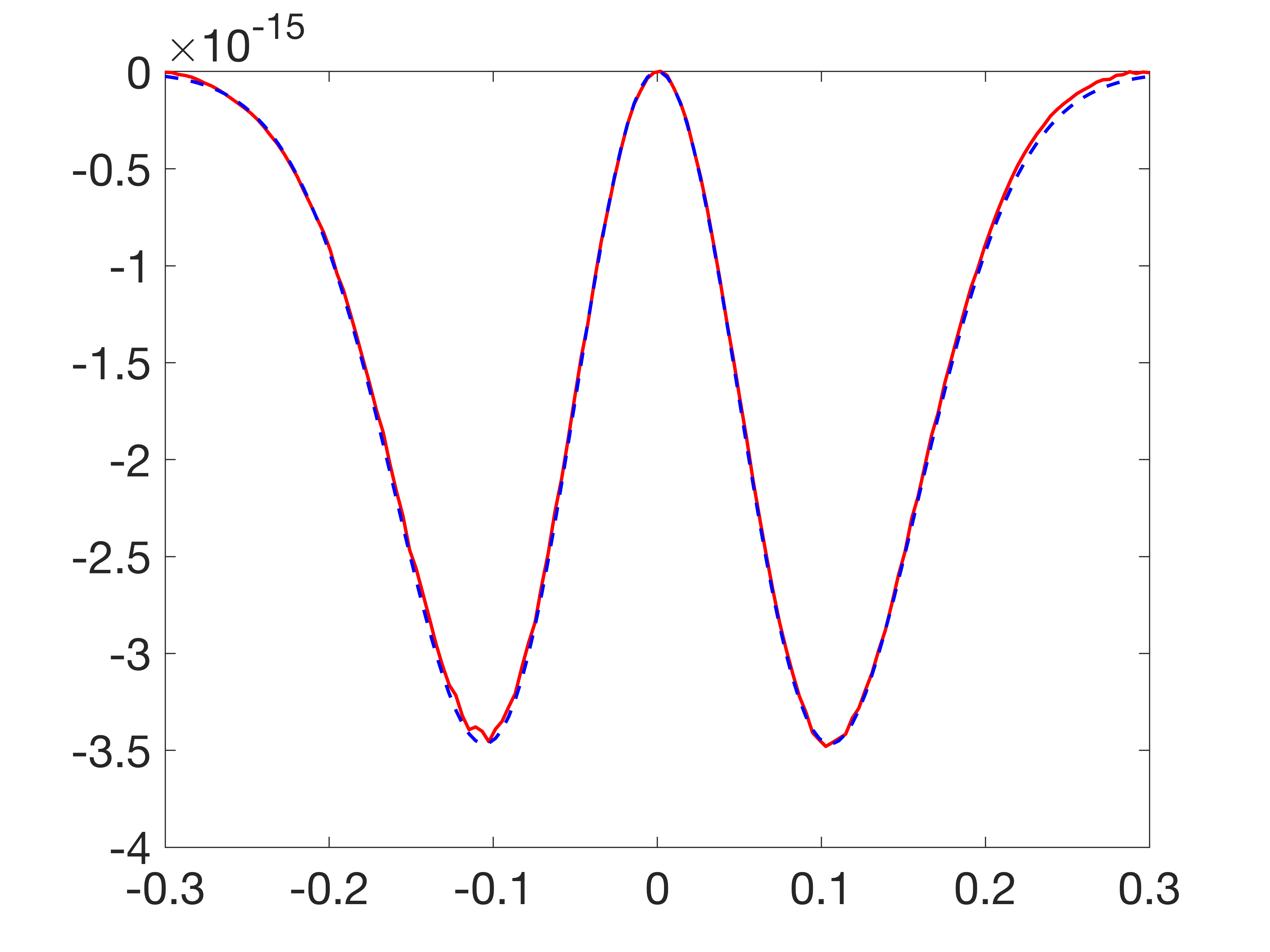}}\\
	\subfloat[$x_2=-0.002,x_3=0.15$]{\includegraphics[width=0.4\textwidth]{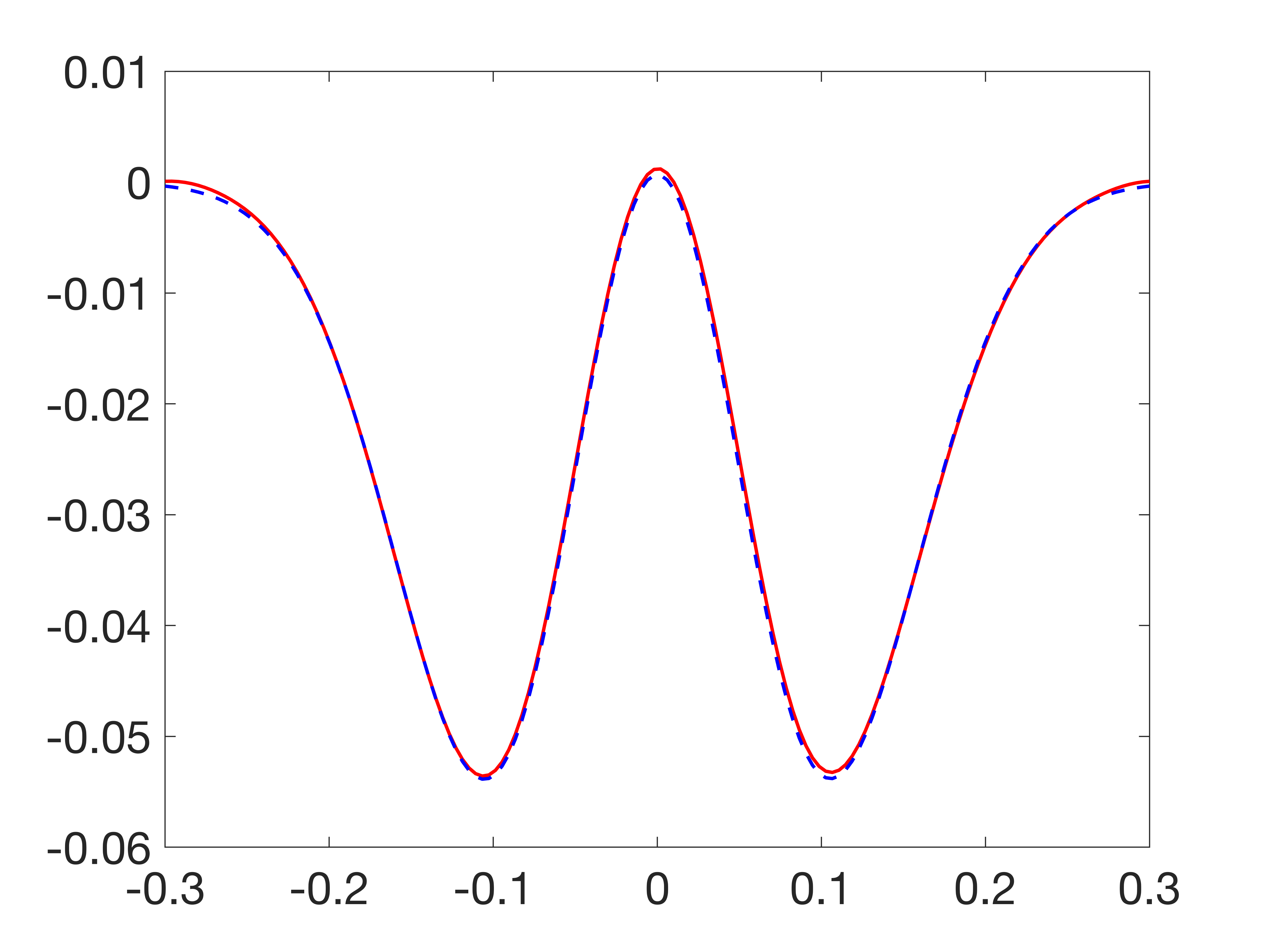}}\quad
	\subfloat[$x_2=-0.082,x_3=0.15$]{\includegraphics[width=0.4\textwidth]{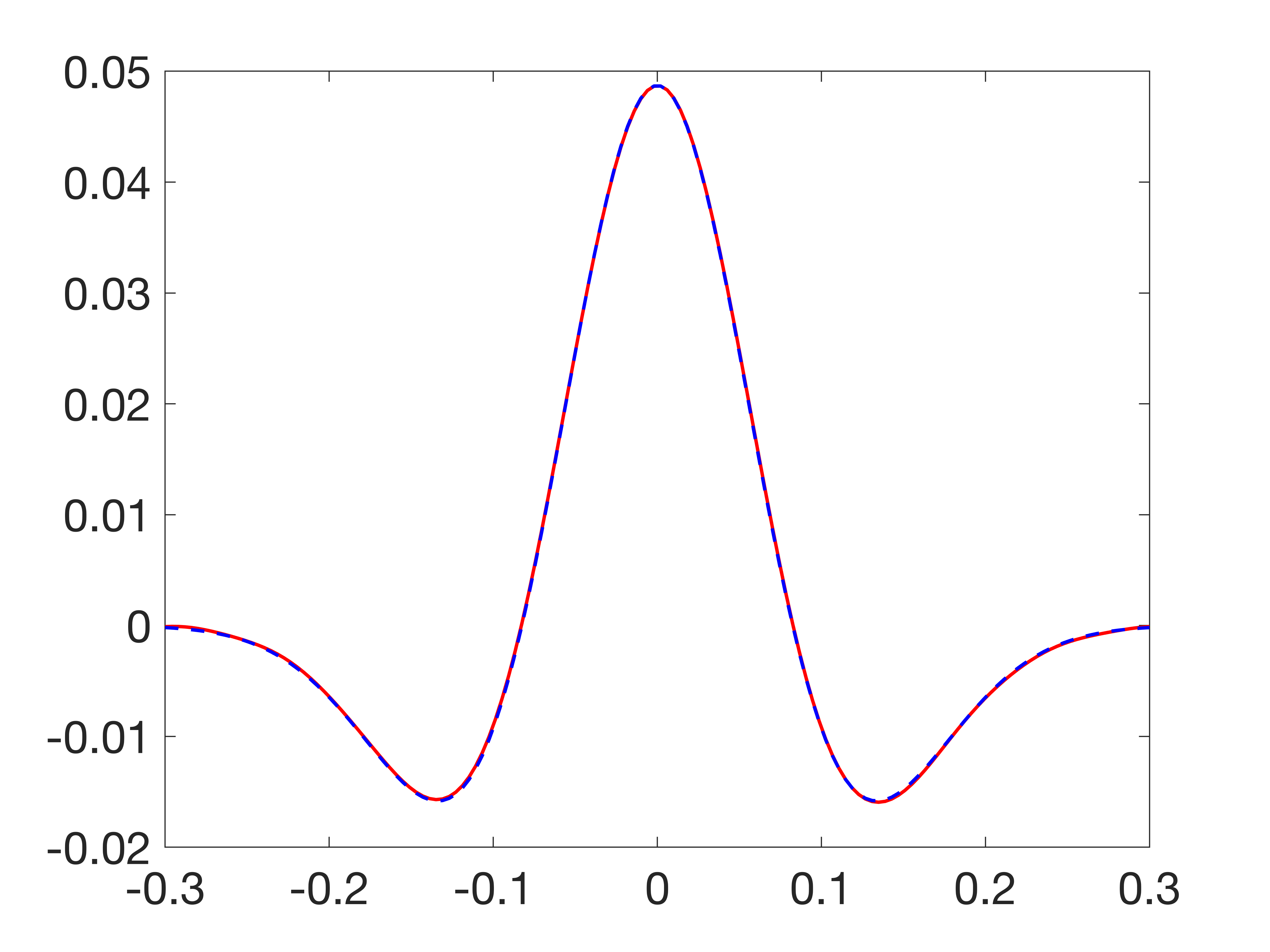}}\\
	\caption{The reconstructed source is plotted against the exact source $f_2$ at different cross-sections.}\label{fig: S2_2}
\end{figure}

Further, we compute the recovery errors at different $x_3$ locations subject to different noise levels $\delta.$ \Cref{tab: error} shows that the reconstructions are reasonably stable in the sense that the quality of the inversion improves as the noise level decreases. 

\begin{table}[htbp]
	\centering
	\caption{The relative errors of the reconstruction of $f_2$ with different noise levels $\delta.$}\label{tab: error}
	\begin{tabular}{ccccc}
			\toprule  
			$\delta$ & $1\%$ & $5\%$ & $10\%$ & $20\%$
			\\ 
			\midrule  
			$N(\delta)$ & $20$        & $10$        & $10$         & $5$\\
			$err(0.2)$     & $1.18\%$ & $1.96\%$ & $3.72\%$ & $5.27\%$\\
			$err(0.3)$     & $1.15\%$ & $1.90\%$ & $3.81\%$ & $8.42\%$\\
			$err(0.4)$     & $1.23\%$ & $1.52\%$ & $3.20\%$ & $6.30\%$\\
			$Err$             & $2.77\%$ & $3.03\%$ & $4.65\%$ & $9.39\%$\\
			\bottomrule 
		\end{tabular}
\end{table}

\subsection{The far-field case}

In this last subsection, we briefly discuss the inverse source problem using the far-field data. We assume $V = 0$ in this case.
Under Assumption (A) the outgoing solution to \eqref{model} can be represented by
\[
u(x, k) =  \frac{\rm i}{4}  \sum_{n = 1}^{N_0} \int_{\widetilde{B}_R} H_0^{(1)} (\beta_n(k) |\tilde{x} - \tilde{y}|) f_n(\tilde{y}){\rm d}\tilde{y}  \sin(\alpha_n x_3).
\]
From the asymptotic expansion of the fundamental solution $\mathrm{i}H^{(1)}_0(  k| \tilde{x} -   \tilde{y}|)/4$ of the two-dimensional Helmholtz equation \cite{CK} as $|\tilde{x}|\to\infty$, we have 
\begin{align*}
	u(x, k) = \frac{\rm i}{4}  \sum_{n=1}^{N_0}\frac{1 + \rm i}{4\sqrt{\pi k}} \frac{\mathrm{e}^{{\rm i}  \beta_n(k) |\tilde{x}|}}{\sqrt{|\tilde{x}|}} \int_{\mathbb R^2} \mathrm{e}^{-{\rm i}  
		\beta_n(k)  \tilde{y}\cdot \hat{\tilde{x}}} f_n(\tilde{y}) {\rm d}  \tilde{y} \sin(\alpha_n x_3)
	+  \mathcal{O}\left(\frac{1}{| \tilde{x}|}\right), 
\end{align*}
where $\hat{\tilde{x}} = \tilde{x}/|\tilde{x}|\in\mathbb S=\{\tilde{x}\in\mathbb{R}^2:\, |\tilde{x}|=1\}$ is the observation angle. Since $\mathrm{e}^{{\rm i}  \beta_n(k) |\tilde{x}|}$ is exponentially decaying for those $n$ such that $k<\alpha_n$, we define the far-field pattern $\mathcal{A}_\infty(k, \hat{\tilde{x}}, x_3)$ as follows
\begin{align}\label{far}
	\mathcal{A}_\infty(  k, \hat{\tilde{x}}, x_3) &=  \sum_{\{n: k>\alpha_n\}}\int_{\mathbb R^2} \mathrm{e}^{-{\rm i}  
		\beta_n(k)  \tilde{y}\cdot \hat{\tilde{x}}} f_n(\tilde{y}) {\rm d}  \tilde{y}  \sin(\alpha_n x_3)\notag \\
	&= \sum_{\{n: k>\alpha_n\}} \hat{f}_n(\beta_n(k) \hat{\tilde{x}})  \sin(\alpha_n x_3).
\end{align}

Let $I_1$ be an interval such that $I_1\subset (\alpha_1, \alpha_2)$  and let $x_3$ be fixed such that $\sin\alpha_1 x_3\neq 0$.
As can be seen in \eqref{far}, if the multi-wavenumber far-field data $\{\mathcal{A}_\infty(  k, \hat{\tilde{x}}, x_3):  \hat{\tilde{x}}\in\mathbb S, k\in I_1\}$ is given,
then
\[
\mathcal{A}_\infty\left( k, \hat{\tilde{x}}, x_3\right) = \hat{f}_1\left(\beta_2(k) \hat{\tilde{x}}\right)  \sin(\alpha_1 x_3).
\]
This implies that the far field of only one propagating mode corresponding to $f_1$ is detected which gives the following Fourier transform
\[
\left\{\hat{f}_1(\tilde{\xi}): |\tilde{\xi}|< \sqrt{\alpha_2^2  - \alpha_1^2}\right\}.
\]
Thus, by the analyticity of $\hat{f}_1(\tilde{\xi})$ and the inverse Fourier transform we can determine $f_1$ and even derive a stability estimate
by analytic continuation. 
Moreover, let $I_2\subset (\alpha_2, \alpha_3)$, if $f_1$ is recovered and 
$\{\mathcal{A}_\infty(  k, \hat{\tilde{x}}, x_3):  \hat{\tilde{x}}\in\mathbb S, k\in I_2\}$ is available, in a similar manner
we can determine $f_2$ by detecting the far-field of the corresponding single mode. Proceeding recursively in this way, we get the following uniqueness result.

\begin{theorem}\label{uisp_1}
	Let $x_3$ be chosen such that $\sin\alpha_n x_3 \neq 0$ for $1\leq n\leq  N_0$. The far-field $\{\mathcal{A}_\infty(  k, \hat{\tilde{x}}, x_3):  \hat{\tilde{x}}\in\mathbb S, k\in \cup_{i=1}^{N_0} I_i, \, I_i\subset (\alpha_i, \alpha_{i+1})\}$ uniquely determines $f$.
\end{theorem}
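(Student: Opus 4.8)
The plan is to prove uniqueness by a linearity reduction followed by an induction on the Fourier-mode index that ``peels off'' the propagating modes one interval at a time. By linearity of the map $f\mapsto\mathcal A_\infty$ (clear from \eqref{far} together with \eqref{fourier}), it suffices to assume $\mathcal A_\infty(k,\hat{\tilde{x}},x_3)=0$ for all $\hat{\tilde{x}}\in\mathbb S$ and all $k\in\bigcup_{i=1}^{N_0}I_i$, and to deduce that $f_n\equiv 0$ for every $1\le n\le N_0$, whence $f=\sum_{n=1}^{N_0}f_n(\tilde{x})\sin(\alpha_n x_3)=0$ by Assumption (A). The structural fact I would use repeatedly is that, since $\operatorname{supp}f\subset C_R$, each mode $f_n$ lies in $L^2(\mathbb R^2)$ with support in $\widetilde B_R$, so by the Paley--Wiener theorem $\hat f_n$ extends to an entire function on $\mathbb C^2$; in particular $\hat f_n$ is real-analytic on $\mathbb R^2$, hence determined on all of $\mathbb R^2$ by its restriction to any nonempty open subset, after which $f_n$ is recovered by the inverse Fourier transform.

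For the base case, fix $x_3$ with $\sin(\alpha_n x_3)\neq0$ for $1\le n\le N_0$. Since $n\mapsto\alpha_n=(2n-1)\pi/(2L)$ is strictly increasing, for $k\in I_1\subset(\alpha_1,\alpha_2)$ the propagating index set $\{n:k>\alpha_n\}$ equals $\{1\}$, so \eqref{far} collapses to $0=\hat f_1(\beta_1(k)\hat{\tilde{x}})\sin(\alpha_1 x_3)$, giving $\hat f_1(\beta_1(k)\hat{\tilde{x}})=0$ for $k\in I_1$, $\hat{\tilde{x}}\in\mathbb S$. As $I_1$ is a nonempty open interval and $\beta_1(k)=\sqrt{k^2-\alpha_1^2}$ maps it onto a nonempty open subinterval of $(0,\sqrt{\alpha_2^2-\alpha_1^2})$, the set $\{\beta_1(k)\hat{\tilde{x}}:k\in I_1,\ \hat{\tilde{x}}\in\mathbb S\}$ contains a nonempty open annulus in $\mathbb R^2$; by analyticity, $\hat f_1\equiv0$ on $\mathbb R^2$, so $f_1\equiv0$.

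For the inductive step, suppose $f_1\equiv\cdots\equiv f_{i-1}\equiv0$ for some $2\le i\le N_0$. For $k\in I_i\subset(\alpha_i,\alpha_{i+1})$ the propagating index set is $\{1,\dots,i\}$, so \eqref{far} reads $0=\sum_{n=1}^{i}\hat f_n(\beta_n(k)\hat{\tilde{x}})\sin(\alpha_n x_3)=\hat f_i(\beta_i(k)\hat{\tilde{x}})\sin(\alpha_i x_3)$, the lower-order terms vanishing by the induction hypothesis. Since $\sin(\alpha_i x_3)\neq0$, we get $\hat f_i(\beta_i(k)\hat{\tilde{x}})=0$ on the nonempty open annulus $\{\beta_i(k)\hat{\tilde{x}}:k\in I_i,\ \hat{\tilde{x}}\in\mathbb S\}$, and the analyticity of $\hat f_i$ forces $\hat f_i\equiv0$, i.e.\ $f_i\equiv0$. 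Iterating from $i=1$ to $N_0$ yields $f\equiv0$, which proves the theorem.

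I expect no serious obstacle here. The one point that genuinely needs care is the bookkeeping that, for $k\in I_i$, the set of propagating indices $\{n:k>\alpha_n\}$ is exactly $\{1,\dots,i\}$, so that no resonance $\alpha_n$ interferes with the isolation of the $i$-th mode; this is precisely what the hypothesis $I_i\subset(\alpha_i,\alpha_{i+1})$, together with the strict monotonicity of $n\mapsto\alpha_n$, guarantees. The remaining ingredients are the elementary observation that $\beta_i$ sends the open interval $I_i$ onto an open interval, and the Paley--Wiener analyticity already invoked. If a quantitative (stability) version were wanted, along the lines hinted at just before the statement, the only extra input would be a quantitative analytic-continuation estimate transporting each $\hat f_i$ from the annulus to all of $\mathbb R^2$, analogous to the arguments used in \cref{sec: IP2}.
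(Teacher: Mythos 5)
Your proposal is correct and follows essentially the same route as the paper: the paper also isolates $f_1$ from the single propagating mode on $I_1\subset(\alpha_1,\alpha_2)$, recovers it via Paley--Wiener analyticity of $\hat f_1$ on the disk $|\tilde\xi|<\sqrt{\alpha_2^2-\alpha_1^2}$, and then proceeds recursively through $I_2,\dots,I_{N_0}$, subtracting the already-determined lower modes exactly as in your induction. Your version merely phrases this as a linearity reduction to zero data plus induction, and incidentally corrects a small typo in the paper (which writes $\hat f_1(\beta_2(k)\hat{\tilde x})$ where $\beta_1(k)$ is meant).
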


\begin{remark}
	Compared with Theorem \ref{uisp}, Theorem \ref{uisp_1} requires data measured at more wavenumbers. Physically speaking, a possible reason accounting for the necessity of this extra data is that part of the information is only involved in the evanescent modes which decay drastically. Hence, this information cannot be accessed or retrieved from the far-field data. Thus, more far-field data is inevitably needed to compensate for the lack of information towards establishing the uniqueness.
\end{remark}

\begin{remark}
	We would like to highlight that the uniqueness of the inverse source problem is established by incorporating the far-field due to a single mode one by one from low to high wavenumbers. This framework of derivation has the advantage of avoiding the situation where multiple propagating modes are concurrently present. Conversely, for instance, if we only collect the data
	$\{\mathcal{A}_\infty(  k, \hat{\tilde{x}}):  \hat{\tilde{x}}\in\mathbb S, k\in I_2\}$, then we are only able to find the superposed quantity $\hat{f}_1(\beta_1(k) \hat{\tilde{x}}) + \hat{f}_2(\beta_2(k) \hat{\tilde{x}})$. In this case, it would be difficult to separate and recover either $f_1$ or $f_2$ from the sum.
\end{remark}

\section{Inverse problem II: determining the potential}\label{sec: IP2}

In this section, an inverse potential problem in the waveguide is considered. We assume that $V$ is real-valued and $f\equiv 0$ in this section.
The key ingredient in the analysis is applying results in \Cref{main_d} and an argument of analytic continuation.

Let $k>\alpha_1$ and $d\in\mathbb S$ be respectively the wavenumber and incident direction, and denote $u^{\rm inc}(x, k, d) = \mathrm{e}^{{\rm i}\sqrt{k^2 - \alpha_1^2}\tilde{x}\cdot d}\sin\alpha_1 x_3$ the incident field. Then the total field is given by $u = u^{\rm inc}  + u^s$ where $u^s$ is the scattered field produced by $u^{\rm inc}$ and the potential $V(\tilde{x})$.
Consider the following homogeneous  Schr\"odinger equation with $u$ satisfying the boundary conditions \eqref{bc}
\begin{equation}\label{model_1}
	-\Delta u+ V u - k^2 u =0,\quad \text{in } D.
\end{equation}

We are interested in the inverse problem of determining $V$ from $u(x, k, d)$ on $\Gamma_R$. Here we employ $u(x, k, d)$ to signify the dependence of 
$u$ on $k$ and $d$.

Under the above configuration, the scattered field satisfies 
\begin{equation}\label{eqn_1}
		-\Delta u^s+ V u^s - k^2 u^s =-Vu^{\rm inc} ,\quad \text{in } D
\end{equation}
and the boundary conditions \eqref{bc}. Multiplying both sides of \eqref{eqn_1} by the factor $\mathrm{e}^{{\rm i}\sqrt{k^2 - \alpha_1^2}\tilde{x}\cdot d_1}\sin\alpha_1 x_3$ with $d_1\in\mathbb S$ and integrating by parts over $C_R$, we obtain
\begin{equation}\label{eqn_2}
	\begin{split}
		&\quad\int_{\widetilde{B}_R} V \mathrm{e}^{{\rm i}\sqrt{k^2 - \alpha_1^2}(d + d_1)}{\rm d}\tilde{x}\\ 
		&= \int_{\partial\widetilde{B}_R}\!\! \left(\partial_{\nu_{\tilde{x}}}u_1(\tilde{x}, k) \mathrm{e}^{{\rm i}\sqrt{k^2 - \alpha_1^2}\tilde{x}\cdot d_1}
		- {\rm i}\sqrt{k^2 - \alpha_1^2} d_1\cdot \nu_{\tilde{x}} u_1(\tilde{x}, k) \mathrm{e}^{{\rm i}\sqrt{k^2 - \alpha_1^2}\tilde{x}\cdot d_1}\!\right)\! {\rm d}\tilde{x}\\
		&\quad- \int_{C_R} V u^s \mathrm{e}^{{\rm i}\sqrt{k^2 - \alpha_1^2}\tilde{x}\cdot d_1}\sin\alpha_1 x_3{\rm d}x\\
		&=  \int_{\partial\widetilde{B}_R} \!\! \left(\partial_{\nu_{\tilde{x}}}u_1(\tilde{x}, k) \mathrm{e}^{{\rm i}\sqrt{k^2 - \alpha_1^2}\tilde{x}\cdot d_1}
		- {\rm i}\sqrt{k^2 - \alpha_1^2} d_1\cdot \nu_{\tilde{x}} u_1(\tilde{x}, k) \mathrm{e}^{{\rm i}\sqrt{k^2 - \alpha_1^2}\tilde{x}\cdot d_1} \!\right)\!{\rm d}\tilde{x}\\
		&\quad+\mathcal{O}\left(\frac{1}{k}\right).
	\end{split}
\end{equation}
where $u_1$ is the first Fourier mode of $u$. As the inhomogeneous term $-Vu^{\rm inc}$ on the right-hand side of the equation \eqref{eqn_1} has a single Fourier mode in the $x_3$ variable,
in the last equality, we can use the resolvent estimate in Theorem \ref{main_d}  to derive
\[
\int_{C_R} V u^s \mathrm{e}^{{\rm i}\sqrt{k^2 - \alpha_1^2}\tilde{x}\cdot d_1}\sin\alpha_1 x_3{\rm d}x = \mathcal{O}\Big(\frac{1}{k}\Big).
\]
Notice that $\left\{\sqrt{k^2 - \alpha_1^2}(d + d_1): d, d_1\in\mathbb S\right\} = \left\{\xi: |\xi|\leq 2\sqrt{k^2 - \alpha_1^2}\right\}.$ 
We next show that the boundary measurements $\{u(x, k, d): x\in\Gamma_R, k\in I, d\in\mathbb S\}$ uniquely determine $V$. Here $I = [K_0, K_1]$ with
$K_1>K_0>\max\{M, \alpha_1\}$ where $M$ is specified in Theorem \ref{meromorphic}.

Let $V_1$ and $V_2$ be two potential functions.
Assume that $u^{(1)}$ and $u^{(2)}$ are solutions to \eqref{model_1} corresponding to $V_1$ and $V_2$, respectively. Denote $W = V_1 - V_2$ and
$v = u^{(1)} - u^{(2)}$. Substituting $u$ and $V$ in \eqref{eqn_2} by $u^{(1)}, V_1$ and $u^{(2)}, V_2$, respectively, and taking subtraction yields
\begin{align*}
		&\int_{\widetilde{B}_R} W \mathrm{e}^{{\rm i}\sqrt{k^2 - \alpha_1^2}(d + d_1)}{\rm d}\tilde{x}\\ 
		&=  \int_{\partial\widetilde{B}_R}\left(\partial_{\nu_{\tilde{x}}}v_1(\tilde{x}, k) \mathrm{e}^{{\rm i}\sqrt{k^2 - \alpha_1^2}\tilde{x}\cdot d_1}
		- {\rm i}\sqrt{k^2 - \alpha_1^2} d_1\cdot \nu_{\tilde{x}} v_1(\tilde{x}, k) \mathrm{e}^{{\rm i}\sqrt{k^2 - \alpha_1^2}\tilde{x}\cdot d_1}\right) {\rm d}\tilde{x}\\
		&\quad+\mathcal{O}\Big(\frac{1}{k}\Big).
\end{align*}

Now it suffices to show $W = 0$ if $v(x, k, d) = 0$ for all $d\in\mathbb S$ and $k\in I$. As $v = v(x, k, d)$ is analytic for $k\in I$, we immediately have 
$v(x, k, d) = 0$ for all $k\geq M$. Moreover, from the Dirichlet-to-Neumann map the normal derivative $\partial_{\nu_{\tilde{x}}} v_1$ can be computed from
\begin{equation*}
\partial_{\nu_{\tilde{x}}} v_1 = Tv_1 = \sum_{m\in\mathbb Z} \beta_1 a_m^1 \frac{{H_m^{(1)}}^\prime(\beta_1 R)}{H_m^{(1)}(\beta_1 R)} \mathrm{e}^{{\rm i}m\theta}\sin\alpha_1 x_3.
\end{equation*}
where $a_m^1$  are the Fourier coefficients
\[
a_m^1 = \sqrt{\frac{\pi}{LR}}\int_{C_R} \mathrm{e}^{-{\rm i}m\theta} \sin(\alpha_1x_3) \, v {\rm d}x, \quad x = (r\cos\theta, r\sin\theta, x_3).
\]
Thus, $v(x, k, d) = 0$ gives $\partial_{\nu_{\tilde{x}}} v(x, k, d) = 0$. Consequently, from \eqref{eqn_2} we have
\begin{equation}\label{eqn_3}
	\widehat{W}(\xi) \leq \mathcal{O}\Big(\frac{1}{k}\Big) \quad \text{for all} \, |\xi|\leq 2\sqrt{k^2 - \alpha_1^2}.
\end{equation}
As $v(x, k, d) = 0$ on $\Gamma_R$ holds for all $k\geq M$, by letting $k\to\infty$ in \eqref{eqn_3} we arrive at 
\[
\widehat{W}(\xi) = 0,\quad \xi\in\mathbb R^2,
\]
which yields $W = 0$ by the inverse Fourier transform. This implies the uniqueness of the inverse problem. In summary, we have the following theorem.

\begin{theorem}
	The multi-frequency measurement $\{u(x, k, d): x\in\Gamma_R, k\in I, d\in\mathbb S\}$ uniquely determines $V$.
\end{theorem}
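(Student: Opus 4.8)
The plan is to argue by contradiction along the standard linearization scheme. Suppose two potentials $V_1,V_2$ generate the same Dirichlet data on $\Gamma_R$ for every $k\in I$ and every $d\in\mathbb S$; set $W=V_1-V_2$ and $v=u^{(1)}-u^{(2)}$, and aim to show $W\equiv 0$. The backbone is the integral identity \eqref{eqn_2}: testing the scattered-field equation \eqref{eqn_1} against $\mathrm{e}^{{\rm i}\sqrt{k^2-\alpha_1^2}\tilde{x}\cdot d_1}\sin\alpha_1 x_3$ and integrating by parts over $C_R$ turns the Fourier integral $\int_{\widetilde{B}_R}W(\tilde{x})\,\mathrm{e}^{{\rm i}\sqrt{k^2-\alpha_1^2}(d+d_1)\cdot\tilde{x}}\,{\rm d}\tilde{x}$ into boundary terms on $\partial\widetilde{B}_R$ carrying only the first Fourier mode $v_1$ of $v$, plus a remainder of the form $\int_{C_R}W u^{s}\,\mathrm{e}^{{\rm i}\sqrt{k^2-\alpha_1^2}\tilde{x}\cdot d_1}\sin\alpha_1 x_3\,{\rm d}x$. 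Since $\{\sqrt{k^2-\alpha_1^2}(d+d_1):d,d_1\in\mathbb S\}=\{\xi:|\xi|\le 2\sqrt{k^2-\alpha_1^2}\}$, the left-hand side is exactly $\widehat{W}(\xi)$ on the disk $|\xi|\le 2\sqrt{k^2-\alpha_1^2}$.

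The next step is to show the remainder is $\mathcal O(1/k)$ uniformly in $d,d_1$. The crucial observation is that the inhomogeneity $-Vu^{\rm inc}$ possesses a single Fourier mode in $x_3$, so $u^{s}=R_V(k)(-Vu^{\rm inc})$ falls under Assumption (A) with $N_0=1$, and Theorem~\ref{main_d} applies; on the real axis the exponential factor in the resolvent bound is inert (the relevant $\Im\beta_1$ never turns negative there), leaving $\|\eta u^{s}\|_{L^2(D)}\lesssim k^{-1/2}\|V u^{\rm inc}\|_{L^2(D)}\lesssim k^{-1/2}$, using $|u^{\rm inc}|\le 1$ on $\mathrm{supp}\,V$ for $k>\alpha_1$. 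This controls the remainder in \eqref{eqn_2}.

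Then I would exploit analyticity in $k$. By Theorem~\ref{main_d}, for fixed $x\in\Gamma_R$ and $d\in\mathbb S$ the map $k\mapsto v(x,k,d)$ extends analytically to the strip $\mathcal R$; since it vanishes on the interval $I\subset\mathcal R\cap\mathbb R^+$, which has an accumulation point, it vanishes for all real $k\ge M$. Vanishing Dirichlet data forces $v_1\equiv 0$ on $\partial\widetilde{B}_R$, and because $v$ is outgoing the explicit Dirichlet-to-Neumann series $T$ displayed just before \eqref{eqn_3} yields $\partial_{\nu_{\tilde{x}}}v_1=Tv_1=0$ as well. Substituting $v_1=\partial_{\nu_{\tilde{x}}}v_1=0$ into the difference form of \eqref{eqn_2} gives $\widehat{W}(\xi)=\mathcal O(1/k)$ for all $|\xi|\le 2\sqrt{k^2-\alpha_1^2}$; letting $k\to\infty$, the disk exhausts $\mathbb R^2$ while the right-hand side tends to $0$, so $\widehat{W}\equiv 0$ on $\mathbb R^2$ and hence $W\equiv 0$ by Fourier inversion, which is the claimed uniqueness.

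The step I expect to be the main obstacle is the uniform-in-$k$ decay of the remainder $\int_{C_R}W u^{s}\,\mathrm{e}^{{\rm i}\sqrt{k^2-\alpha_1^2}\tilde{x}\cdot d_1}\sin\alpha_1 x_3\,{\rm d}x$: this is precisely where the resonance-free strip and the resolvent estimate of Theorem~\ref{main_d} are indispensable, and where one must keep track that the single-mode structure of the incident field keeps the scattered field within the scope of Assumption (A). A secondary subtlety is the joint justification of the analytic continuation in $k$ and of the DtN representation up to $\Gamma_R$, i.e.\ recovering the Neumann trace required by the integration by parts from only the limited-aperture Dirichlet data on $\Gamma_R$.
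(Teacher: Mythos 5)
Your proposal follows essentially the same route as the paper: the integral identity \eqref{eqn_2} obtained by testing against $\mathrm{e}^{{\rm i}\sqrt{k^2-\alpha_1^2}\tilde{x}\cdot d_1}\sin\alpha_1 x_3$, the $\mathcal O(1/k)$ control of the remainder via the single-mode structure and Theorem \ref{main_d}, analytic continuation of $v$ from $I$ to all $k\ge M$, recovery of the Neumann trace through the DtN operator $T$, and passage to the limit $k\to\infty$ in \eqref{eqn_3} to conclude $\widehat{W}\equiv 0$. The argument is correct, and your explicit remark that the exponential factor in the resolvent bound is harmless on the real axis (since $(\Im\beta_1)_-=0$ there) is a useful clarification of a point the paper leaves implicit.
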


Now we discuss the stability.
Assume that $u_1^s(x,  k,  d)$ and $u_2^s(x,  k,  d)$ are the scattered field corresponding to the incident wave $u^{\rm inc} = \mathrm{e}^{{\rm i}\sqrt{k^2 - \alpha_1^2}\tilde{x}\cdot d}\sin\alpha_1 x_3$ and potentials $V_1$ and $V_2$, respectively.
Let $s>0$ be an arbitrary positive constant. Define a real-valued function space
\begin{align*}
\mathcal E_Q = \{V \in H^{s}(D)\cap L^\infty(D):\ & \|V\|_{H^{s}(D)}\leq Q,\ \|V\|_{L^{\infty}(D)}\leq Q,\\
& \mathrm{supp}V\subset C_R, ~ V: D \rightarrow \mathbb R \}.
\end{align*}

Utilizing the resolvent estimates and the estimate \eqref{eqn_2}, we have the following stability estimate (the proof follows \cite{ZZ} in a straightforward way by applying the quantitative analytic continuation. Thus we omit it for brevity).

\begin{theorem}\label{main_2d}
	Let $V_1, V_2\in \mathcal E_Q$. The following increasing stability estimate holds
	\begin{align}\label{stability_1}
		\|V_1 - V_2\|_{L^2(D)}^2
		\lesssim K^\alpha\epsilon^2+\frac{1}{K^\beta(\ln|\ln \epsilon|)^\beta},
	\end{align}
	where 
%	\begin{align*}
%		\epsilon^2 
%		=&\sup_{ k\in I, \, d\in\mathbb S}2\Big(  k^2\|u^s_1(x,  k,  d) - u^s_2(   x,  k,    d)\|_{L^2(\Gamma_R)}^2\\
%		&\quad\quad\quad\quad+\|T (u^s_1(   x,  k,    d) - u^s_2(   x,  k,    d))\|_{L^2(\Gamma_R)}^2\Big)
%	\end{align*}
	$$
		\epsilon^2 
		=2\sup_{ k\in I, \, d\in\mathbb S}\Big(  k^2\|u^s_1(x,  k,  d) - u^s_2(   x,  k,    d)\|_{L^2(\Gamma_R)}^2
		+\|T (u^s_1(   x,  k,    d) - u^s_2(   x,  k,    d))\|_{L^2(\Gamma_R)}^2\Big)
	$$
	and $I = [K_0, K]$ with $K>M$,
	$\alpha = \frac{3}{2(3+2s)}$ and $\beta = \frac{s}{2(3 + 2s)}$.
\end{theorem}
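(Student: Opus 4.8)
The plan is to reduce \eqref{stability_1} to a quantitative estimate for the Fourier transform $\widehat{W}$ of $W=V_1-V_2$, which is a compactly supported function of $\tilde{x}$ since $V_1,V_2$ are $x_3$-independent and supported in $C_R$. By Plancherel's identity, for any cut-off radius $\rho>0$,
\[
\|W\|_{L^2(D)}^2\ \lesssim\ \int_{|\xi|\le\rho}|\widehat{W}(\xi)|^2\,\mathrm{d}\xi\ +\ \rho^{-2s}\,\|W\|_{H^s(D)}^2\ \lesssim\ \int_{|\xi|\le\rho}|\widehat{W}(\xi)|^2\,\mathrm{d}\xi\ +\ Q^2\rho^{-2s},
\]
so everything reduces to controlling $\widehat{W}$ on a ball of radius $\rho=\rho(\epsilon,K)$ to be optimized at the end. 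The low-frequency ball is handled via the subtracted form of the integral identity \eqref{eqn_2}: for $\xi=\sqrt{k^2-\alpha_1^2}\,(d+d_1)$ one has $\widehat{W}(\xi)=\mathcal{J}(k,d,d_1)+\mathcal{O}(1/k)$, where $\mathcal{J}$ is the boundary functional on $\partial\widetilde{B}_R$ built from $v_1$ and its Dirichlet-to-Neumann image, so that $|\mathcal{J}(k,d,d_1)|^2\lesssim k^2\|v\|_{L^2(\Gamma_R)}^2+\|Tv\|_{L^2(\Gamma_R)}^2$ with $v=u^{(1)}-u^{(2)}$. Since $\{\sqrt{k^2-\alpha_1^2}\,(d+d_1):d,d_1\in\mathbb{S}\}$ fills the disk of radius $2\sqrt{k^2-\alpha_1^2}$, integrating the squared identity over $d,d_1\in\mathbb{S}$ and over $k$ up to $K_\rho$ (defined by $2\sqrt{K_\rho^2-\alpha_1^2}=\rho$), and accounting for the Jacobian of $(d,d_1)\mapsto\xi$ together with the area of the disk, gives
\[
\int_{|\xi|\le\rho}|\widehat{W}(\xi)|^2\,\mathrm{d}\xi\ \lesssim\ \rho^{c_0}\Big(\sup_{K_0\le k\le K_\rho}\big(k^2\|v\|_{L^2(\Gamma_R)}^2+\|Tv\|_{L^2(\Gamma_R)}^2\big)+K_\rho^{-2}\Big)
\]
for a fixed power $c_0$; the residual $\mathcal{O}(1/k)$ is controlled uniformly in $k$ by the resolvent estimate of Theorem \ref{main_d}, exploiting that the source $-Wu^{(2)}$ driving $v$ carries a single $x_3$-mode.

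The obstruction is that genuine data is available only for $k\in I=[K_0,K]$, whereas the bound above calls for data up to $K_\rho\ge K$. Analyticity bridges this gap: by Theorem \ref{main_d} the map $k\mapsto v(\cdot,k,d)$ — and hence $\mathcal{J}$ — extends analytically to the strip $\mathcal{R}$, where $|\mathcal{J}|\lesssim\mathrm{e}^{C|k|}$ by the same resolvent estimate, while $|\mathcal{J}|\lesssim\epsilon$ on $I$. Applying the quantitative analytic-continuation estimate of \cite{ZZ} — in essence a Hadamard three-lines bound on $\mathcal{R}$, with the harmonic measure of $I$ seen from $k>K$ decaying like $\mathrm{e}^{-c(k-K)}$ — yields $|\mathcal{J}(k,d,d_1)|\lesssim\mathrm{e}^{Ck}\,\epsilon^{\mu(k)}$ for $K\le k\le K_\rho$ with $\mu(k)$ of order $\mathrm{e}^{-ck}$. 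Inserting this into the low-frequency bound, adding the tail $Q^2\rho^{-2s}$, and choosing $K_\rho$ (equivalently $\rho$) just large enough that the exponentially amplified term $\mathrm{e}^{CK_\rho}\epsilon^{\mu(K_\rho)}$ stays under control — which forces $K_\rho$ to scale like $K$ times a power of $\ln|\ln\epsilon|$ — and then optimizing over the split radius produces the two-term estimate \eqref{stability_1} with $\alpha=\tfrac{3}{2(3+2s)}$ and $\beta=\tfrac{s}{2(3+2s)}$.

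The main difficulty, and the only genuinely delicate point, is this last quantitative step: the analytic extension furnished by Theorem \ref{main_d} is controlled on $\mathcal{R}$ only by $\mathrm{e}^{C|k|}$, which competes directly against the data smallness $\epsilon$, and extracting the sharp trade-off — in particular the double logarithm $\ln|\ln\epsilon|$ in \eqref{stability_1} rather than a bare $\ln(1/\epsilon)$, together with the precise powers of $K$ — requires the careful three-lines argument of \cite{ZZ}. Since that argument transfers to the present strip $\mathcal{R}$ essentially word for word, and the two new ingredients here (the single-mode reduction of $v$ and the uniform bound on the $\mathcal{O}(1/k)$ residual, both consequences of Theorem \ref{main_d}) have already been established, the proof amounts to assembling these pieces along the route just described.
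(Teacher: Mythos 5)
Your proposal follows exactly the route the paper intends: the paper in fact omits the proof of Theorem \ref{main_2d}, stating only that it ``follows \cite{ZZ} in a straightforward way by applying the quantitative analytic continuation,'' and your outline---splitting $\|W\|_{L^2}^2$ via Plancherel into a low-frequency ball controlled by the subtracted integral identity \eqref{eqn_2} plus an $H^s$ tail, then bridging from the data interval $I$ to the larger ball by quantitative analytic continuation on the strip $\mathcal R$---is precisely that argument. All the key ingredients (the single-$x_3$-mode reduction, the resolvent bound of Theorem \ref{main_d} controlling the $\mathcal{O}(1/k)$ residual, and the harmonic-measure/three-lines estimate that produces the $\ln|\ln\epsilon|$ factor and the exponents $\alpha,\beta$) match what the paper relies on.
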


The stability estimate \eqref{stability_1} implies the uniqueness. 
It consists of two parts: the data discrepancy and the high-frequency tail. The former is of the Lipschitz type. The latter decreases as $K$ increases which makes the problem have an almost Lipschitz stability. Overall, the result reveals that the problem becomes more stable when higher-frequency data is used.

\section{Inverse problem III: simultaneous recovery of source and potential}\label{sec: IP3}

This section is devoted to the co-inversion model of simultaneously reconstructing the source and potential from active measurements.
In this model, given the incident wave $u^{\rm inc}(\tilde{x}, x_3) = \mathrm{e}^{{\rm i}\sqrt{k^2 - \alpha_1^2}\tilde{x}\cdot d}\sin\alpha_1 x_3$ with incident direction $d\in\mathbb S$, the total field $u = u^{\rm inc}  + u^s$ satisfies the wave equation \cref{main_eq} and the boundary conditions \eqref{bc}.
We also assume that $f$ is real-valued and $V\geq 0$ in this section.

We are interested in the inverse problem of determining both $f$ and $V$ from $u|_{\Gamma_R}$.
To that end, we apply the method developed in \cite{WZ} to the current geometry of a waveguide. We first present a preliminary result for the subsequent analysis.
\begin{lemma}\label{tail}
	Let $d\in\mathbb S$ and $g\in H^1(\widetilde{B}_R)$ with $\text{supp}g\subset \widetilde{B}_R$ and $\|g\|_{H^{1}(\widetilde{B}_R)}\leq Q$. Then the following estimate holds: 
	$$
		\left|\int_{\widetilde{B}_R} g(x) \mathrm{e}^{{\rm i}\sqrt{k^2 - \alpha_1^2} \tilde{x}\cdot d}{\rm d}\tilde{x}\right| \leq  \frac{C(Q)}{k},
	$$
	where $C(Q)$ is a generic constant depending on $Q$.
\end{lemma}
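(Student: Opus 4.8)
The plan is to recognize the integral as (a constant multiple of) the Fourier transform of the zero-extension of $g$ evaluated at the point $\xi := \sqrt{k^2-\alpha_1^2}\,d\in\mathbb R^2$, whose modulus is $|\xi| = \sqrt{k^2-\alpha_1^2}$, and to convert the one extra order of Sobolev regularity of $g$ into the decay factor $|\xi|^{-1}$ via a single integration by parts. First I would use the identity $(d\cdot\nabla_{\tilde x})\,\mathrm{e}^{{\rm i}\sqrt{k^2-\alpha_1^2}\tilde x\cdot d} = {\rm i}\sqrt{k^2-\alpha_1^2}\,\mathrm{e}^{{\rm i}\sqrt{k^2-\alpha_1^2}\tilde x\cdot d}$ to write
\[
\int_{\widetilde B_R} g(\tilde x)\, \mathrm{e}^{{\rm i}\sqrt{k^2-\alpha_1^2}\,\tilde x\cdot d}\,{\rm d}\tilde x
= \frac{1}{{\rm i}\sqrt{k^2-\alpha_1^2}}\int_{\widetilde B_R} g(\tilde x)\,(d\cdot\nabla_{\tilde x})\,\mathrm{e}^{{\rm i}\sqrt{k^2-\alpha_1^2}\,\tilde x\cdot d}\,{\rm d}\tilde x .
\]

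Next I would integrate by parts in the direction $d$. Since $\operatorname{supp}g\subset\widetilde B_R$, the zero-extension of $g$ belongs to $H^1(\mathbb R^2)$ (if one is concerned that the support meets $\partial\widetilde B_R$, one first enlarges the domain to a ball $\widetilde B_{R'}$ with $R'>R$, on which $g$ has support compactly contained, and integrates by parts there), so the boundary term vanishes and one obtains
\[
\int_{\widetilde B_R} g(\tilde x)\, \mathrm{e}^{{\rm i}\sqrt{k^2-\alpha_1^2}\,\tilde x\cdot d}\,{\rm d}\tilde x
= -\frac{1}{{\rm i}\sqrt{k^2-\alpha_1^2}}\int_{\widetilde B_R} (d\cdot\nabla_{\tilde x}g)(\tilde x)\, \mathrm{e}^{{\rm i}\sqrt{k^2-\alpha_1^2}\,\tilde x\cdot d}\,{\rm d}\tilde x .
\]
Taking absolute values and applying the Cauchy--Schwarz inequality over the bounded domain $\widetilde B_R$ gives $\big|\int_{\widetilde B_R}(d\cdot\nabla_{\tilde x}g)\,\mathrm{e}^{{\rm i}\sqrt{k^2-\alpha_1^2}\tilde x\cdot d}\,{\rm d}\tilde x\big|\le \|\nabla g\|_{L^1(\widetilde B_R)}\le |\widetilde B_R|^{1/2}\|\nabla g\|_{L^2(\widetilde B_R)}\le |\widetilde B_R|^{1/2}Q$. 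Finally, in the regime where this lemma is invoked the wavenumber $k$ is bounded away from $\alpha_1$ (indeed $k\ge K_0>\max\{M,\alpha_1\}$ in all later applications), so $\sqrt{k^2-\alpha_1^2}\ge c\,k$ for a fixed constant $c>0$ depending only on $\alpha_1$ and the lower bound on $k$ (e.g. $c=1/\sqrt2$ once $k\ge\sqrt2\,\alpha_1$). Combining the two bounds yields the assertion with $C(Q)=c^{-1}|\widetilde B_R|^{1/2}Q$.

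As for difficulty: there is essentially no hard step here. The only points that deserve a moment's care are the vanishing of the boundary term when $g$ is merely supported in the closed ball $\widetilde B_R$, which is dispatched by the slight enlargement of the domain described above, and the elementary lower bound $\sqrt{k^2-\alpha_1^2}\gtrsim k$, which is precisely where the implicit largeness of $k$ enters and is the reason the stated constant $C(Q)$ is allowed to depend on the fixed data of the problem. Everything else is a one-line integration by parts together with the Cauchy--Schwarz inequality.
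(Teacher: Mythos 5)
Your proof is correct and follows essentially the same route as the paper's: a single integration by parts transferring one derivative from the oscillatory exponential onto $g$, followed by H\"older/Cauchy--Schwarz and the bound $\sqrt{k^2-\alpha_1^2}\gtrsim k$. The only (cosmetic) difference is that you differentiate in the direction $d$ itself, whereas the paper picks a coordinate direction with $|d_1|\ge \sqrt{2}/2$; your version also makes explicit the passage from $\sqrt{k^2-\alpha_1^2}$ to $k$, which the paper leaves implicit.
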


\begin{proof}
	Since $d=(d_1, d_2)\in\mathbb S$, without loss of generality we assume that the $|d_1|\geq \sqrt{2}/2$. Using integration by parts yields
	\begin{align*}
		\int_{\widetilde{B}_R} g(\tilde{x}) \mathrm{e}^{{\rm i}\sqrt{k^2 - \alpha_1^2} \tilde{x}\cdot d}\,{\rm d}\tilde{x} &=  \frac{1}{{\rm i}\sqrt{k^2 - \alpha_1^2} d_1} \int_{\widetilde{B}_R} \frac{\partial}{\partial x_1} \left( \mathrm{e}^{{\rm i}\sqrt{k^2 - \alpha_1^2} \tilde{x}\cdot d}\right) g(\tilde{x})\,{\rm d}\tilde{x}\\
		& = -\frac{1}{ {\rm i}\sqrt{k^2 - \alpha_1^2} d_1} \int_{\widetilde{B}_R} \mathrm{e}^{{\rm i}\sqrt{k^2 - \alpha_1^2} \tilde{x}\cdot d}  \frac{\partial g}{\partial x_1}  {\rm d} \tilde{x},
	\end{align*}
	which gives by H\"older's inequality 
	\[
	\left|\int_{\widetilde{B}_R} g(\tilde{x}) \mathrm{e}^{{\rm i}\sqrt{k^2 - \alpha_1^2} \tilde{x}\cdot d}\,{\rm d}\tilde{x}\right| \leq C \|g\|_{H^1(\widetilde{B}_R)}\frac{1}{ k}.
	\]
	The proof is completed.
\end{proof}

In the case of the equation \eqref{main_eq} with an inhomogeneous term $f$, by multiplying both sides of \eqref{eqn_1} by 
$\mathrm{e}^{{\rm i}\sqrt{k^2 - \alpha_1^2}\tilde{x}\cdot d_1}\sin\alpha_1 x_3$ with $d_1\in\mathbb S$ and integrating by parts over $C_R$, 
\eqref{eqn_2} now becomes
\begin{align}\label{eqn_4}
	&\int_{\widetilde{B}_R} V \mathrm{e}^{{\rm i}\sqrt{k^2 - \alpha_1^2}(d + d_1)}{\rm d}\tilde{x} \notag\\
	&= \int_{\partial\widetilde{B}_R} \left(\partial_{\nu_{\tilde{x}}}u_1(\tilde{x}, k) \mathrm{e}^{{\rm i}\sqrt{k^2 - \alpha_1^2}\tilde{x}\cdot d_1}
	- {\rm i}\sqrt{k^2 - \alpha_1^2} d_1\cdot \nu_{\tilde{x}} u_1(\tilde{x}, k) \mathrm{e}^{{\rm i}\sqrt{k^2 - \alpha_1^2}\tilde{x}\cdot d_1} \right){\rm d}\tilde{x}\notag\\
	&\quad- \int_{C_R} V u^s \mathrm{e}^{{\rm i}\sqrt{k^2 - \alpha_1^2}\tilde{x}\cdot d_1}\sin\alpha_1 x_3{\rm d}x + \int_{C_R} f \mathrm{e}^{{\rm i}\sqrt{k^2 - \alpha_1^2}\tilde{x}\cdot d_1}\sin\alpha_1 x_3{\rm d}x\notag\\
	&=  \int_{\partial\widetilde{B}_R} \left(\partial_{\nu_{\tilde{x}}}u_1(\tilde{x}, k) \mathrm{e}^{{\rm i}\sqrt{k^2 - \alpha_1^2}\tilde{x}\cdot d_1}
	- {\rm i}\sqrt{k^2 - \alpha_1^2} d_1\cdot \nu_{\tilde{x}} u_1(\tilde{x}, k) \mathrm{e}^{{\rm i}\sqrt{k^2 - \alpha_1^2}\tilde{x}\cdot d_1}\right) {\rm d}\tilde{x}\notag\\
	&\quad+ \int_{\widetilde{B}_R} f_1 \mathrm{e}^{{\rm i}\sqrt{k^2 - \alpha_1^2}\tilde{x}\cdot d_1} {\rm d}\tilde{x}+\mathcal{O}\left(\frac{1}{k}\right).
\end{align}
By applying Lemma \ref{tail} we have
\[
\int_{\widetilde{B}_R} f_1 \mathrm{e}^{{\rm i}\sqrt{k^2 - \alpha_1^2}\tilde{x}\cdot d_1} {\rm d}\tilde{x} = \mathcal{O}\Big(\frac{1}{k}\Big),
\]
and then \eqref{eqn_4} becomes
\begin{align}\label{eqn_5}
	\int_{\widetilde{B}_R} V \mathrm{e}^{{\rm i}\sqrt{k^2 - \alpha_1^2}(d + d_1)}{\rm d}\tilde{x} 
	&=  \int_{\partial\widetilde{B}_R} \partial_{\nu_{\tilde{x}}}u_1(\tilde{x}, k) \mathrm{e}^{{\rm i}\sqrt{k^2 - \alpha_1^2}\tilde{x}\cdot d_1}\notag\\
	&\quad- {\rm i}\sqrt{k^2 - \alpha_1^2} d_1\cdot \nu_{\tilde{x}} u_1(\tilde{x}, k) \mathrm{e}^{{\rm i}\sqrt{k^2 - \alpha_1^2}\tilde{x}\cdot d_1} {\rm d}\tilde{x}
	+\mathcal{O}\Big(\frac{1}{k}\Big).
\end{align}
As \eqref{eqn_5} is similar to \eqref{eqn_2}, repeating the previous arguments we have that the active boundary data $\{u(x, k, d): x\in\Gamma_R, k\in I, d\in\mathbb S\}$
corresponding to the incident wave and source uniquely determine $V$. Once $V$ is known, we can also recover $f$.
In summary, we have the following uniqueness result:
\begin{theorem}
	The active multi-wavenumber boundary measurements 
	\[
	\{u(x, k, d): x\in\Gamma_R,\ k\in I,\ d\in\mathbb S\}
	\] 
	corresponding to the incident wave $\mathrm{e}^{{\rm i}\sqrt{k^2 - \alpha_1^2}\tilde{x}\cdot d}\sin\alpha_1 x_3$ uniquely determine $V$ with unknown $f$.
	As a consequence of the recovery of $V$, the source $f$ can be uniquely determined as well by the passive boundary measurements
	\[
	\{u(x, k_{n, j}): x\in\Gamma_R,  |k_{n, j}|\leq M\} \cup \{u(x, k): x\in\Gamma_R, k\in I\}.
	\]
\end{theorem}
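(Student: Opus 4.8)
The statement has two parts: that the active data pin down $V$ even with $f$ unknown, and that, $V$ being known, the supplementary passive data then pin down $f$. For the first part the plan is to replay the argument of \Cref{sec: IP2} in the presence of a source. Fix two source--potential pairs $(V_1, f_1)$ and $(V_2, f_2)$, with $V_i\geq 0$, $f_i$ real-valued, both supported in $C_R$, and suppose they produce the same active boundary data, $u^{(1)}(x, k, d) = u^{(2)}(x, k, d)$ for $x\in\Gamma_R$, $k\in I$, $d\in\mathbb S$. Put $W = V_1 - V_2$ and $v = u^{(1)} - u^{(2)}$. Since the incident field is common to both problems, $v = u^{s,(1)} - u^{s,(2)}$ is radiating, so its first Fourier mode $v_1$ satisfies the DtN relation $\partial_{\nu_{\tilde{x}}} v_1 = T v_1$ on $\partial\widetilde{B}_R$ exactly as in \Cref{sec: IP2}; hence $v = 0$ on $\Gamma_R$ forces $v_1 = 0$ and $\partial_{\nu_{\tilde{x}}} v_1 = 0$ on $\partial\widetilde{B}_R$. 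Writing \eqref{eqn_5} for each pair and subtracting, all boundary integrals cancel and I am left with
\begin{equation*}
	\int_{\widetilde{B}_R} W\, \mathrm{e}^{{\rm i}\sqrt{k^2 - \alpha_1^2}(d + d_1)\cdot\tilde{x}}\, {\rm d}\tilde{x} = \mathcal{O}\!\left(\frac{1}{k}\right), \qquad d, d_1\in\mathbb S,
\end{equation*}
where the $\mathcal{O}(1/k)$ is controlled exactly as in the derivation of \eqref{eqn_5}: the source contributions are $\mathcal{O}(1/k)$ by \Cref{tail}, and the terms $\int_{C_R} V_i u^{s,(i)} \mathrm{e}^{{\rm i}\sqrt{k^2 - \alpha_1^2}\tilde{x}\cdot d_1}\sin\alpha_1 x_3\, {\rm d}x$ are $\mathcal{O}(1/k)$ by the resolvent estimate of \Cref{main_d}.

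Next I would upgrade this to a bound valid for all large real $k$. The first mode $v_1 = \widetilde{R}_{V_1}(\beta_1(k))\big(f_{1,1} - V_1 u_1^{\rm inc}\big) - \widetilde{R}_{V_2}(\beta_1(k))\big(f_{2,1} - V_2 u_1^{\rm inc}\big)$, and hence its DtN image $T v_1$, depends analytically on $k$ throughout the strip $\mathcal R$: indeed $\beta_1(k)\in S_\theta\cap\Omega_\delta$ there (as in the proof of \Cref{main_d}), so \Cref{meromorphic} supplies the analyticity of $\widetilde{R}_{V_i}(\beta_1(k))$, while $u_1^{\rm inc} = \mathrm{e}^{{\rm i}\sqrt{k^2 - \alpha_1^2}\tilde{x}\cdot d}$ is analytic on $\mathcal R$ because $\mathcal R$ avoids $[-\alpha_1, \alpha_1]$. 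As $v = 0$ on the interval $I\subset\mathcal R$, analytic continuation forces $v = 0$, hence the full Cauchy data of $v_1$ on $\partial\widetilde{B}_R$ vanish, for every $k > M$. Consequently $\widehat{W}(\xi) = \mathcal{O}(1/k)$ for every $k > M$ and every $\xi$ with $|\xi|\leq 2\sqrt{k^2 - \alpha_1^2}$, using $\{\sqrt{k^2 - \alpha_1^2}(d + d_1): d, d_1\in\mathbb S\} = \{\xi: |\xi|\leq 2\sqrt{k^2 - \alpha_1^2}\}$; letting $k\to\infty$ gives $\widehat{W}\equiv 0$ on $\mathbb R^2$, whence $V_1 = V_2$ by the Fourier inversion theorem. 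This settles the first part.

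For the second part, suppose $V$ has been recovered and we are additionally supplied with the passive measurements $\{u(x, k_{n, j}): |k_{n, j}|\leq M\}\cup\{u(x, k): k\in I\}$, i.e.\ the field $R_V(k) f$ radiated by $f$ alone in the medium with the now-known potential $V$, sampled at these wavenumbers. With $V$ known, this is exactly the data required by the inverse source problem with known potential solved in \Cref{uisp}, so that theorem yields the unique determination of $f$. Concatenating the two steps completes the proof.

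The crux — and the place where the waveguide geometry really enters — is the uniform $\mathcal{O}(1/k)$ control of the Born-type error $\int_{C_R} V_i u^{s,(i)}\cdots$: it hinges on a resonance-free region containing a half-line of the positive real axis, so that the two-dimensional resolvent bounds of \Cref{meromorphic}, transplanted to the first $x_3$-mode, are in force at all large real $k$, which is precisely the content of \Cref{main_d}. Once that estimate is granted, the remainder is the scheme of analytic continuation followed by Fourier inversion, already used for \Cref{uisp} and \Cref{main_2d}.
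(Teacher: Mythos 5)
Your proposal is correct and follows essentially the same route as the paper: it derives the identity \eqref{eqn_4}, removes the source contribution via \Cref{tail} to reach \eqref{eqn_5}, and then repeats the analytic-continuation and Fourier-inversion argument of \Cref{sec: IP2} to recover $V$, after which \Cref{uisp} with the passive data yields $f$. The only difference is that you spell out details (the analyticity of $v_1$ in $k$ and the cancellation of the boundary terms) that the paper compresses into ``repeating the previous arguments.''
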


\section{Conclusion}\label{sec: conclusion}

This work presents a resonance-free region and resolvent estimates for the resolvent of the Schr\"odinger operator in a planar waveguide in three dimensions. 
As an application, theoretical uniqueness and stability for several inverse scattering problems are established. The analysis only requires the limited aperture Dirichlet data at multiple wavenumbers. Moreover, we develop an effective Fourier-based reconstruction method for the inverse source problem. 
We believe that the method can be applied to the biharmonic wave equation in a planar waveguide with Navier boundary conditions and the geometry of tubular waveguides. A more challenging analytic problem is the direct and inverse elastic scattering in a waveguide. In this case, we may not have a straightforward Fourier decomposition such as \eqref{eqn_n} due to the coupled pressure and shear waves. We hope to report the relevant progress elsewhere in the future.

%\section*{Acknowledgments}

\end{document}